\newcommand{\oN}{{\mathbb N}}
\newcommand{\oR}{{\mathbb R}}
\newcommand\norm[1]{\left\lVert#1\right\rVert}
\newcommand{\ignore}[1]{}
\title{On the Exactness of Sum-of-Squares Approximations for the  Cone of $5\times 5$  Copositive Matrices}
\author {Monique Laurent
\thanks{Centrum Wiskunde \& Informatica (CWI), Amsterdam, and Tilburg University. \url{monique.laurent@cwi.nl} }
\And 
Luis Felipe Vargas
\thanks{Centrum Wiskunde \& Informatica (CWI), Amsterdam. \url{luis.vargas@cwi.nl}
\newline
This work is supported by the European Union's Framework Programme for Research and Innovation Horizon
2020 under the Marie Skłodowska-Curie Actions Grant Agreement No. 813211  (POEMA).
} 
}
\newtheorem{theorem}{Theorem}[section]
\newtheorem{coro}[theorem]{Corollary}
\newtheorem{lemma}[theorem]{Lemma}
\newtheorem{conjecture}[theorem]{Conjecture}
\newtheorem{question}[theorem]{Question}
\newtheorem{proposition}[theorem]{Proposition}
\newtheorem{remark}[theorem]{Remark}
\newcommand{\MD}{\mathcal{D}}
\newcommand{\bCOP}{\partial\text{\rm COP}}
\def\tagform@#1{\maketag@@@{$($#1$)$\@@italiccorr}}
\numberwithin{equation}{section}
\renewcommand*\env@matrix[1][*\c@MaxMatrixCols c]{%
	\hskip -\arraycolsep
	\let\@ifnextchar\new@ifnextchar
	\array{#1}}
\newcommand{\supp}{\text{\rm Supp}}
\newcommand{\COP}{{\text{\rm COP}}}
\newcommand{\MK}{{\mathcal K}}
\newcommand{\LAS}{{\text{\rm LAS}}}
\newcommand{\MQ}{{\mathcal Q}}
\newcommand{\MS}{{\mathcal S}}
\newcommand{\Diag}{\text{\rm Diag}}
\newcommand{\intt}{\text{\rm int}}
\newcommand{\mmod}{\text{\rm mod }}
\begin{document}
\maketitle

\begin{abstract}
We investigate  the hierarchy of  conic inner approximations $\MK^{(r)}_n$ ($r\in \oN$) for the copositive cone $\COP_n$, introduced by  Parrilo ({\em {S}tructured {S}emidefinite {P}rograms and {S}emialgebraic {G}eometry
  {M}ethods in {R}obustness and {O}ptimization}, PhD Thesis, California Institute of Technology, 2001).
It is  known that $\COP_4=\MK^{(0)}_4$  and that, while the union of the cones $\MK^{(r)}_n$ covers the interior of $\COP_n$, it does not cover the full cone $\COP_n$ if $n\ge 6$. Here we investigate the remaining case $n=5$, where all extreme rays have been fully characterized by Hildebrand (The extreme rays of the 5 $\times$ 5 copositive cone. {\em Linear Algebra and its Applications}, 437(7):1538--1547, 2012). We show that the Horn matrix $H$ and its positive diagonal scalings play an exceptional role among the extreme rays of $\COP_5$.
We show that equality $\COP_5=\bigcup_{r\ge 0} \MK^{(r)}_5$ holds if and only if 
any positive diagonal scaling of $H$ belongs to  $\MK^{(r)}_5$ for some $r\in\oN$.
As a main ingredient for the proof, we introduce new Lasserre-type conic inner approximations for $\COP_n$, based on sums of squares of polynomials. We show their links to the cones $\MK^{(r)}_n$, and we use  an optimization approach that permits to exploit finite convergence results on Lasserre hierarchy to show membership in the new cones.

\end{abstract}

\keywords{Copositive cone, Horn matrix, sum-of-squares polynomials }
\section{Introduction}

The main object of study in this paper is the cone  of copositive matrices $\COP_n$, defined as 
\begin{align}\label{cop}
\COP_n=\{M\in \mathcal{S}^n \text{ : } x^TMx\geq 0 \text{ for all } x\in\oR^n_+ \}
\end{align}
 or,  equivalently,  as
\begin{align}\label{cop-s}
\COP_n=\{M\in \mathcal{S}^n \text{ : } (x^{\circ2})^TMx^{\circ2}\geq 0 \text{ for all } x\in\mathbb{R}^n \},
\end{align}
after setting $x^{\circ 2}=(x_1^2,\ldots,x_n^2)$ and  $\oR^n_+=\{x\in\oR^n: x\ge 0\}$.
Optimizing over the copositive cone is  hard in general since this captures a wealth of hard combinatorial optimization problems such as finding maximum stable sets in graphs and minimum  graph coloring, and,  more generally,  mixed-integer binary optimization problems (see, e.g.,  \cite{Betal}, \cite{dKP2002},  \cite{Burer2009}, \cite{DR2021}). 
Determining whether a matrix $M$ is copositive is a co-NP-complete problem (see  \cite{MK1987}). These hardness results  motivate investigating hierarchies of cones that offer tractable approximations for the copositive cone. 
Such conic approximations arise naturally by replacing  the  condition $x^TM x \ge 0$ on $\oR^n_+$ in (\ref{cop}),  or the  condition $(x^{\circ 2})^TMx^{\circ 2}\ge 0$ on $\oR^n$ in (\ref{cop-s}), by a {\em sufficient }condition for nonnegativity. 
  Parrilo  \cite{Parrilo-thesis-2000}  introduced the cones $\MK_n^{(r)}$, whose definition relies on requiring that the polynomial  $(\sum_{i=1}^nx_i^2)^r (x^{\circ 2})Mx^{\circ 2}$ is a   sum of squares of polynomials, i.e., 
\begin{align}\label{eqKr}
\MK^{(r)}_n=\Big\{M\in \MS^n: \Big(\sum_{i=1}^nx_i^2\Big)^r(x^{\circ2})^TMx^{\circ2} \in\Sigma\Big\}.
\end{align}
Here and throughout,  $\Sigma=\{\sum_{i}q_i^2: q_i\in \mathbb{R}[x]\}$ denotes  the cone of sums of squares of polynomials, $\oR[x]_r$ denotes the space of polynomials with degree at most $r$ and we set $\Sigma_r=\Sigma\cap \oR[x]_r$. 
By construction we have $\MK^{(r)}_n\subseteq \MK^{(r+1)}_n\subseteq \COP_n$ and thus 
\begin{align}\label{eqincl}
\bigcup_{r\ge 0} \MK^{(r)}_n \subseteq \COP_n.
\end{align}
In addition, the cones $\MK^{(r)}_n$ cover the interior of the copositive cone, i.e.,
\begin{align}\label{eqint}
\intt (\COP_n)\subseteq \bigcup_{r\ge 0}\MK^{(r)}_n.
\end{align}
Indeed,  a matrix $M$ lies in the interior of $\COP_n$ precisely when  $x^TMx>0$ on $\oR^n_{+}\setminus \{0\}$, or, equivalently, when $(x^{\circ2})^TMx^{\circ2} >0$ on $\oR^n\setminus \{0\}$.  Using a result of Reznick \cite{Reznick1995},  this  positivity condition implies  existence of an integer $r\in\oN$ for which $(\sum_{i=1}^nx_i^2)^r(x^{\circ2})^TMx^{\circ2}$ is a sum of squares. 

As is well-known  we have equality $\COP_n=\MK^{(0)}_n$ if and only if $n\leq 4$ \cite{Diananda}. 
For  $n\ge 6$,   copositive matrices  that do not belong to  any cone $\MK_n^{(r)}$  have  been constructed  in  \cite{LV2021b},  so  the inclusion (\ref{eqincl})  is strict for any $n\geq 6$.  
However, the question  of deciding  whether the inclusion (\ref{eqincl}) is strict for $n=5$, which is the main topic of this paper, remains open.
\begin{question}[\cite{LV2021b}]\label{q1}
Does equality $\COP_5 = \bigcup_{r\ge 0}\MK_5^{(r)}$ hold?
\end{question}
\noindent
It is known that the inclusion $\MK^{(0)}_5\subseteq \COP_5$ is strict.
For instance, the following matrix, known as the {\em Horn matrix}, 
\begin{align}\label{matHorn}
H = \left(\begin{matrix} 1 & 1 & -1 & -1 & 1\cr
1 & 1 & 1 & -1 & -1\cr
-1 & 1 & 1 & 1 & -1\cr
-1 & -1 & 1 & 1 & 1 \cr
1 & -1 & -1 & 1 & 1
 \end{matrix}\right)
\end{align}
is copositive, but it does not belong to the cone $\MK^{(0)}_5$. On the other hand,  $H$ belongs to the cone $ \MK^{(1)}_5$ (see \cite{Parrilo-thesis-2000}). Clearly, any positive diagonal scaling of a copositive matrix remains a copositive matrix, i.e., $M\in \COP_n$ implies $DMD\in \COP_n$ for any diagonal matrix $D$ with $D_{ii}>0$ for $i\in[n]$. However, this operation does {\em not } preserve the cone $\MK^{(r)}_n$ for $r\ge 1$ (see \cite{DDGH}). For instance,  $H\in \MK^{(1)}_5$, but not every positive diagonal scaling of $H$ belongs to $\MK^{(1)}_5$;  the positive diagonal scalings of  $H$ that still belong to $\MK^{(1)}_5$ are characterized in \cite{LV2021b}. It is an open question whether any positive diagonal scaling of $H$ belongs to some cone $\MK^{(r)}_5$.

\begin{question}\label{q2}
Is it true that $DHD\in \bigcup_{r\ge 0}\MK_5^{(r)}$ for all positive diagonal matrices $D$?
\end{question}
\noindent
As we will show in this paper, a positive answer to Question \ref{q2} would imply a positive answer to Question \ref{q1}.
The following is the main contribution of this paper.

\begin{theorem}\label{theomain}
Equality $\COP_5 = \bigcup_{r\ge 0}\MK_5^{(r)}$ holds if and only if $DHD\in  \bigcup_{r\ge 0}\MK_5^{(r)}$ for all positive diagonal matrices $D$.
\end{theorem}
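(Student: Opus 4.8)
\medskip
\noindent\textbf{Proof strategy.}
The forward implication is immediate: if $\COP_5=\bigcup_{r\ge0}\MK_5^{(r)}$, then since $H\in\COP_5$ and any positive diagonal scaling of a copositive matrix is again copositive, $DHD\in\COP_5=\bigcup_{r\ge0}\MK_5^{(r)}$ for every positive diagonal $D$. For the reverse implication, assume $DHD\in\bigcup_r\MK_5^{(r)}$ for all positive diagonal $D$; by (\ref{eqincl}) it suffices to prove $\COP_5\subseteq\bigcup_r\MK_5^{(r)}$. I would first reduce to extreme rays. Each $\MK_5^{(r)}$ is a convex cone, the chain is increasing, and the cones are invariant under simultaneous permutations of rows and columns: if $P$ is a permutation matrix then $(x^{\circ2})^TPMP^Tx^{\circ2}=f_M(P^Tx)$ with $f_M(x)=(x^{\circ2})^TMx^{\circ2}$, and $\sum_ix_i^2$ is permutation invariant, so composing an SOS certificate with $x\mapsto P^Tx$ gives $M\in\MK_5^{(r)}\Rightarrow PMP^T\in\MK_5^{(r)}$. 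Since $\COP_5$ is a closed, pointed, full-dimensional convex cone, the Minkowski--Carath\'eodory theorem writes any $M\in\COP_5$ as a finite sum $M=\sum_jR_j$ of generators of extreme rays; as the $\MK_5^{(r)}$ form an increasing chain of convex cones, $M\in\MK_5^{(\max_jr_j)}$ once each $R_j\in\bigcup_r\MK_5^{(r)}$. Hence it suffices to show that every extreme ray of $\COP_5$ lies in $\bigcup_r\MK_5^{(r)}$.

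Next I would invoke Hildebrand's classification of the extreme rays of $\COP_5$. The rays contained in $\mathcal{N}^5$ (the matrices $e_ie_j^T+e_je_i^T$) and the rank-one rays $vv^T$ all lie in $\MK_5^{(0)}$, because $\mathcal{N}^5+\mathcal{S}^5_+\subseteq\MK_5^{(0)}$: a nonnegative matrix gives $\sum_{i,j}M_{ij}x_i^2x_j^2$, a sum of monomial squares, while $A=\sum_ka_ka_k^T\succeq0$ gives $\sum_k(a_k^Tx^{\circ2})^2$, both in $\Sigma$. The only remaining rays are the exceptional ones, which by Hildebrand's theorem are, up to a permutation and a positive diagonal scaling, the members of an explicit parametrized family of which the Horn matrix $H$ is a distinguished point. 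For a Horn-type ray $R=P(DHD)P^T$ (using that conjugating a diagonal matrix by a permutation is again diagonal), the standing hypothesis gives $DHD\in\bigcup_r\MK_5^{(r)}$ and permutation invariance then gives $R\in\bigcup_r\MK_5^{(r)}$.

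It remains to handle the exceptional rays $R$ that are \emph{not} positive diagonal scalings of a permutation of $H$, and here I would deploy the Lasserre-type machinery announced in the abstract. Membership $R\in\MK_5^{(r)}$ says $(\sum_ix_i^2)^rf_R\in\Sigma$, and the obstruction to a finite $r$ is concentrated at the zeros of $f_R$ on the sphere, equivalently at the zeros of $y^TRy$ on $\oR^5_+$. The plan is to reformulate the existence of such a certificate as finite convergence of a Lasserre hierarchy for a polynomial optimization problem with optimal value $0$ attained exactly at these zeros, and to verify that for every exceptional ray other than the Horn scalings the minimizers are nondegenerate, so that the known finite-convergence theorems apply and yield $R\in\bigcup_r\MK_5^{(r)}$ unconditionally. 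Combining the three classes of extreme rays then completes the reverse implication.

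The main obstacle is exactly this last dichotomy: one must read off from Hildebrand's parametrization why the zeros of $f_R$ are nondegenerate for all exceptional rays except the Horn scalings, i.e.\ why $H$ is the unique degenerate case, and then check that the hypotheses of the relevant finite-convergence result genuinely hold. The delicate points are the treatment of the low-support boundary zeros (such as the $e_i+e_{i+2}$ that appear for $H$ itself) and the precise translation of the SOS-multiplier question into an optimization problem to which finite convergence applies; by comparison the cone-theoretic reductions in the earlier steps are routine.
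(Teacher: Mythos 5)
Your opening reductions are correct and coincide with the paper's: the forward implication, the decomposition of an arbitrary copositive matrix into a finite sum of extreme-ray generators, the permutation invariance of the cones $\MK^{(r)}_5$, the disposal of the extreme rays lying in $\MK^{(0)}_5$, and the use of the standing hypothesis (plus permutation invariance) for the Horn-type rays. The strategy you then announce for the remaining exceptional rays, namely recasting membership in some $\MK^{(r)}_5$ as finite convergence of a Lasserre hierarchy for a problem with optimal value $0$ and verifying nondegeneracy of its minimizers so that a finite-convergence theorem applies, is also exactly the paper's strategy (Theorem \ref{theomain2}, proved via Nie's result, Theorem \ref{theo-Nie}).

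However, what you label ``the main obstacle'' is not a residual detail; it is the main technical content of the theorem, and your proposal leaves it unproved. Three concrete ingredients are missing. First, the exceptional rays are $P\,D\,T(\psi)\,D\,P^T$ for \emph{arbitrary} $D\in\MD^5_{++}$, so nondegeneracy must be established for every positive diagonal scaling, not just for $T(\psi)$; the paper handles this by proving that the optimality conditions are invariant under scaling: minimizers of (\ref{sqp-M}) and of the scaled problem correspond via $u\mapsto D^{-1}u/\|D^{-1}u\|_1$, strict complementarity is preserved under this correspondence (Lemma \ref{scc-diag}), and the second-order sufficiency condition holds automatically whenever there are finitely many minimizers (Lemma \ref{finite-sosc}); these facts combine into the sufficient criterion of Theorem \ref{opt-DMD}. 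Second, the nondegeneracy of $T(\psi)$ itself cannot simply be ``read off'' from Hildebrand's parametrization: one needs his description of the five minimizers and their supports (Lemma \ref{minimizers}) and then an explicit trigonometric verification that $(T(\psi)v)_i>0$ for every minimizer $v$ and every $i$ outside its support, which the paper reduces to showing $\sin\psi_5\,\bigl(\cos(\psi_2+\psi_3)+\cos(\psi_1+\psi_4+\psi_5)\bigr)>0$ on $\Psi$. Third, the translation between finite convergence and cone membership needs care: the paper runs the hierarchy over the simplex, invokes Marshall's closedness result \cite{Marshall} to replace the supremum in (\ref{las-sqp}) by a maximum (so that value $0$ really means membership in $\LAS^{(r)}_{\Delta_5}$), and then uses $\LAS^{(r)}_{\Delta_5}\subseteq\MK^{(r-2)}_5$ (Theorem \ref{theo-link}); your sketch instead places the hierarchy on the sphere, where the analysis is structurally different (there are no inequality constraints, hence no strict-complementarity condition, and each simplex zero splits into an orbit of sign-symmetric zeros), and you carry out the argument in neither formulation. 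Until these steps are supplied, your proposal is a correct plan that matches the paper's, but not a proof.
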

\noindent
In view of relation (\ref{eqint}), in order to show that any $5\times 5$ copositive matrix lies in some $\MK^{(r)}_5$,   we can restrict our attention to copositive matrices that lie on the boundary $\partial \COP_5$  of the copositive cone. Moreover, it suffices to consider matrices that lie on an extreme ray of $\COP_5$.
A crucial ingredient for the proof of Theorem \ref{theomain} is the fact that all the extreme rays of the cone $\COP_5$ are known. They have been characterized by Hildebrand \cite{Hildebrand}, who proved that (up to simultaneous row/column permutation) they fall into three categories: either they are generated by a matrix in $\MK^{(0)}_5$, or they are generated by a positive diagonal scaling of the Horn matrix, or they are generated by a positive diagonal scaling of a class of special matrices  $T(\psi)$ (see Theorem \ref{theo-extreme-rays} below  for details). 
In order to show Theorem \ref{theomain} we thus need to show that all positive diagonal scalings of the matrices $T(\psi)$ lie in some cone $\MK^{(r)}_5$. This forms the main technical part of the paper, which, as we will explain below, relies on following an optimization approach.
\ignore{
\medskip 
We mention another  related open question on the copositive cone.
In \cite{DDGH}  it is shown that any copositive matrix in $\COP_5$ with an all-ones diagonal belongs to the cone $\MK^{(1)}_5$. On the other hand, in \cite{LV2021b} we constructed matrices in $\COP_n$ (for any $n\ge 7$) with an all-ones diagonal that do not belong to any of the cones $\MK^{(r)}_n$. Whether this extends or not  to the case of copositive matrices of size $n=6$ remains open.

\begin{question}[\cite{LV2021b}]
Is it true that any $6\times 6$ copositive matrix with an all-ones diagonal belongs to $\MK^{(r)}_6$ for some $r\in \oN$?
\end{question}
}



\subsubsection*{Organization of the paper}
In Section \ref{secoverview} we give an overview of the main tools and results: we introduce other  sum-of-squares hierarchies of inner approximations for $\COP_n$ (including the cones $\LAS^{(r)}_n$), we present the known description of the extreme rays of $\COP_5$ (that include the matrices $T(\psi)$ ($\psi\in \Psi$) from (\ref{T})), and we sketch the main arguments used to show the main result (Theorem \ref{theomain}). As we explain there, Theorem~\ref{theomain} reduces to showing that every positive diagonal scaling of the matrices $T(\psi)$ ($\psi\in \Psi$) belongs to some of the cones $\LAS^{(r)}_n$ (Theorem \ref{theomain2}).
In Section \ref{secrelations} we show the relationships between the various hierarchies of inner approximations for $\COP_n$. Section \ref{secproofmaintheo} is devoted to the proof of Theorem \ref{theomain2}. In the final Section \ref{secfinal} we group some conclusions, questions, and further research directions.

\subsubsection*{Notation}
Throughout we let $\MD^n$ denote the set of diagonal matrices and $\MD^n_{++}$ the set of diagonal matrices with positive diagonal entries.
For $d\in \oR^n$ we let $D=\Diag(d)\in \MD^n$ denote the diagonal matrix with diagonal entries $D_{ii}=d_i$ for $i\in [n]$. For $x\in \oR^n$, $\|x\|=\sqrt{\sum_{i=1}^nx_i^2}$ denotes its Euclidean norm and $\|x\|_1=\sum_{i=1}^n |x_i|$ denotes its $\ell_1$-norm. For a vector $x\in\oR^n$, $\supp(x)=\{i\in [n]: x_i\ne 0\}$ denotes its support. The vectors $e_1,\ldots,e_n$ denote the standard basis vectors in $\oR^n$, and we let $e=e_1+\ldots + e_n$ denote the all-ones vector.

Given polynomials $p_1,\ldots,p_k$, we let $(p_1,\ldots,p_k)=\{\sum_{i=1}^k u_ip_i: u_i\in \oR[x]\}$ denote the ideal generated by the $p_i$'s.
Throughout we denote by $I_\Delta$ the ideal generated by the polynomial $\sum_{i=1}^n x_i-1$.
For a subset $S\subseteq [n]$ and variables $x=(x_1,\ldots,x_n)$, we use the notation $x^S=\prod_{i\in S}x_i$ and, for a sequence $\beta\in \oN^n$, we set $x^\beta=x_1^{\beta_1}\cdots x_n^{\beta_n}$.

\section{Overview of  results and methods}\label{secoverview}

In this section we give a broad overview of the strategy  that we will follow to show our main result.
As indicated above, we  need to show that any positive diagonal scaling of the special matrices $T(\psi)$ (introduced below in relation (\ref{T})) lies in some cone $\MK^{(r)}_5$. In fact we will show a sharper result and show membership in another, more restricted, conic hierarchy. 
This alternative conic hierarchy arises naturally by considering other  sufficient positivity  conditions for 
the polynomials $x^TMx$ and $(x^{\circ 2})^TMx^{\circ 2}$. We begin with introducing these alternative conic approximations. 

\subsection{Alternative conic approximations for $\COP_n$}\label{sub-cones}

The definitions of  the cone $\COP_n$  in  (\ref{cop}) and in (\ref{cop-s}) rely on requiring, respectively,  nonnegativity of the polynomial $x^TMx$ on $\oR^n_+$, and nonnegativity of the polynomial $(x^{\circ 2})^TMx^{\circ 2}$ on $\oR^n$. 
Since  these two polynomials are homogeneous, this is equivalent  to requiring, respectively,  nonnegativity of $x^TMx$ on the standard simplex $\Delta_n=\{x\in \mathbb{R}^n_+:  \sum_{i=1}^{n}x_i=1\}$,
and nonnegativity of $(x^{\circ 2})^TMx^{\circ 2}$ on the unit sphere $\mathbb{S}^{n-1}=\{x\in \mathbb{R}^n: \sum_{i=1}^nx_i^2=1\}$.
In other words we can reformulate the copositive cone as
\begin{align}
\COP_n=&\{M\in \MS^n: x^TMx\geq 0 \text{ for all } x\in \Delta_n\}, \label{cop-simplex}\\
\COP_n=& \{M\in \MS^n: (x^{\circ2})^TMx^{\circ2}\geq 0 \text{ for all } x\in \mathbb{S}^{n-1}\}. \label{cop-sphere}
\end{align}
Now  we relax the nonnegativity condition and ask instead for a sufficient condition for nonnegativity  on the simplex $\Delta_n$ or on the unit sphere $\mathbb{S}^{n-1}$, in terms of sum-of-squares representations that involve the constraints defining the simplex or the sphere. This follows the commonly used approach in polynomial optimization, based on Lasserre-type relaxations (see \cite{Las2001} and overviews in, e.g.,  \cite{Las2009}, \cite{Laurent2009}), which justifies our notation below. 

For any integer $r\in \oN$, based on definition (\ref{cop-simplex}) for $\COP_n$, we define the following 
cones
\begin{align}
\LAS_{\Delta_n}^{(r)} & =\displaystyle \Big\{M\in \MS^n: x^TMx = \displaystyle \ \sigma_0+\sum_{i=1}^n\sigma_i x_i + q \ \text{ for    } \sigma_0\in \Sigma_r, \sigma_i\in \Sigma_{r-1}   \text{ and }   q\in I_\Delta        \Big\}, \label{eqLASDa} \\
\LAS_{\Delta_n, \text{P}}^{(r)} & =\Big \{M\in \MS^n: x^TMx  =\displaystyle
\sum_{S\subseteq [n],  |S|\le r}\sigma_Sx^S + q \
 \text{ for  } \sigma_S\in \Sigma_{|S|-r} \text{ and } q\in I_\Delta\Big\}.\label{eqLASDPa}
 \end{align}
Recall $I_\Delta$ is the ideal generated by $\sum_{i=1}^nx_i-1$. The index `P' used in the notation $\LAS_{\Delta_n, \text{P}}^{(r)}$ refers to the fact that the decomposition uses the preordering, which consists   of all conic combinations of products of the constraints defining the simplex with sum-of-squares polynomials as multipliers.
Clearly, for any integer $r\geq 0$, we have $$\LAS_{\Delta_n}^{(r)} \subseteq \LAS_{\Delta_n, \text{P}}^{(r)} \subseteq \COP_n.$$
Moreover, the cones $\LAS_{\Delta_n}^{(r)} $ cover the interior of the copositive cone.

\begin{lemma}\label{interior}
$\intt(\COP_n)\subseteq \bigcup_{r\geq 0}\LAS_{\Delta_n}^{(r)}.$
\end{lemma}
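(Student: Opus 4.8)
The plan is to deduce this from Putinar's Positivstellensatz applied to the standard simplex, so the real work is to recognize $\LAS_{\Delta_n}^{(r)}$ as a degree-truncation of the quadratic module attached to $\Delta_n$ and to verify that this module is Archimedean. First I would use the characterization of the interior recalled above: $M\in\intt(\COP_n)$ exactly when $x^TMx>0$ on $\oR^n_+\setminus\{0\}$, which by homogeneity is equivalent to $x^TMx>0$ on the simplex $\Delta_n$. Since $\Delta_n$ is compact and $p:=x^TMx$ is continuous, we in fact have $p\ge\varepsilon$ on $\Delta_n$ for some $\varepsilon>0$; in particular $p$ is strictly positive on $\Delta_n$.

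Consider the quadratic module $\MQ=\Sigma+\sum_{i=1}^n x_i\Sigma$ together with the ideal $I_\Delta$; then $M\in\LAS_{\Delta_n}^{(r)}$ says precisely that $p$ lies in the degree-$r$ truncation of $\MQ+I_\Delta$, and the semialgebraic set attached to these data is exactly $\Delta_n$. The key technical step is to show that $\MQ+I_\Delta$ is Archimedean, i.e.\ that $N-\sum_{i=1}^n x_i^2\in\MQ+I_\Delta$ for some $N$. I would prove this with $N=1$ using, for each $i$, the polynomial identity
\[
x_i(1-x_i)=(1-x_i)^2\,x_i+x_i^2\,(1-x_i).
\]
Each summand lies in $\MQ+I_\Delta$: the first is an SOS multiple of the generator $x_i$, and, since $1-x_i\equiv\sum_{j\ne i}x_j\pmod{I_\Delta}$, the second equals $\sum_{j\ne i}x_i^2x_j$ modulo $I_\Delta$, a sum of SOS multiples of the generators $x_j$. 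Summing over $i$ and using $\sum_i x_i\equiv 1\pmod{I_\Delta}$ gives $\sum_i x_i(1-x_i)\equiv 1-\sum_i x_i^2$, whence $1-\sum_i x_i^2\in\MQ+I_\Delta$, as desired.

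With the Archimedean property in hand, Putinar's Positivstellensatz applies to the strictly positive polynomial $p$ on $\Delta_n$ and yields a representation $x^TMx=\sigma_0+\sum_{i=1}^n\sigma_i x_i+q$ with $\sigma_0,\sigma_i\in\Sigma$ and $q\in I_\Delta$; here the equality constraint $\sum_i x_i=1$ is what contributes the full ideal $I_\Delta$, so that $q$ may range over $I_\Delta$. Taking $r$ to be the least integer with $\deg\sigma_0\le r$ and $\deg\sigma_i\le r-1$ for all $i$ then places $M$ in $\LAS_{\Delta_n}^{(r)}$, hence in $\bigcup_{r\ge0}\LAS_{\Delta_n}^{(r)}$. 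I expect the only genuinely delicate point to be the verification of the Archimedean condition, since without it Putinar's theorem does not apply; the explicit identity above circumvents the usual appeal to a general compact-polytope argument and keeps the degree bookkeeping in the final step transparent.
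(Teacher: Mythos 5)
Your proof is correct and follows the same route as the paper: the paper's proof of Lemma \ref{interior} is exactly an appeal to Putinar's Positivstellensatz (Theorem \ref{theoPutinar}) applied to the simplex, with the Archimedean condition for $\Delta_n$ dismissed as well known. Your only addition is the explicit identity $x_i(1-x_i)=(1-x_i)^2x_i+x_i^2(1-x_i)$ verifying the Archimedean condition with $N=1$, which is a correct and self-contained version of the step the paper leaves to the reference \cite{LV2021a}.
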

\noindent
This follows from Putinar's Positivstellensatz \cite{Putinar} applied to  the simplex (see Theorem \ref{theoPutinar}).

In a similar manner, 
based on definition (\ref{cop-sphere}) for $\COP_n$, we define the following  cone
\begin{align}\label{eqLASSa}
\LAS_{\mathbb{S}^{n-1}}^{(r)}=\Big\{M\in \MS^n: (x^{\circ2})^TMx^{\circ2} = \sigma_0 + u\Big(\sum_{i=1}^nx_i^2-1\Big)
 \text{ for some }  \sigma_0\in \Sigma_{r} \text{ and } u\in \mathbb{R}[x]\Big\}.
\end{align}
As we will show later (see Theorem \ref{theo-link}),
 we have the following relationships  between the various approximation cones for $\COP_n$ that were introduced above: 
  \begin{align}\label{link-cones}
 \LAS_{\Delta_n}^{(r)}\subseteq \MK_n^{(r-2)} = \LAS_{\Delta_n, \text{\rm P}}^{(r)}= \LAS_{\mathbb{S}^{n-1}}^{(2r)}\ \text{ for any } n\ge 1 \text{ and } r\ge 2.
\end{align}
A main motivation for introducing the cones $\LAS_{\Delta_n}^{(r)}$ lies in the fact that they permit to capture certain copositive matrices on the boundary of $\COP_n$, namely those matrices $M$ that arise as positive diagonal scaling of a class of matrices generating extreme rays of $\COP_5$ (see Theorem \ref{theo-extreme-rays} and Theorem \ref{theomain2} below).


\subsection{Extreme rays of $\COP_5$} 

For answering the question of whether the two cones $\COP_5$ and $\bigcup_{r\geq0}\MK_5^{(r)}$ coincide it suffices to look at the matrices that generate an extreme ray of $\COP_5$. This indeed follows directly from the fact that  any $M\in \COP_5$ can be decomposed as a finite sum of matrices generating an extreme ray. For convenience we say that a copositive matrix is {\em extreme} if it generates an extreme ray of $\COP_n$.

 A \textit{positive diagonal scaling} of a matrix $M$ is a matrix of the form $DMD$ where $D\in \MD_{++}^n$. 
 Notice that if $M$ is an extreme matrix of $\COP_n$ then every positive diagonal scaling of $M$ is also an extreme matrix. Moreover, if $M\in \COP_n$ is an extreme matrix then the same holds for every row/column permutation of $M$, i.e., for any matrix of the form  $P^TMP$, where $P$ is a permutation matrix. As observed above, positive diagonal scaling   does not preserve in general membership in $\MK^{(r)}_n$ ($r\ge 1$), however taking a row/column permutation clearly does preserve membership in any $\MK^{(r)}_n$ ($r\ge 0$).

Hildebrand \cite{Hildebrand} characterized the set of extreme matrices of $\COP_5$. For this, he defined the following matrices 
\begin{align}\label{T}
T(\psi)=
\begin{pmatrix}
1 & -\cos \psi_4 & \cos(\psi_4+\psi_5) & \cos(\psi_2+\psi_3) & -\cos\psi_3 \\
-\cos \psi_4 & 1 & -\cos\psi_5 & \cos(\psi_5+\psi_1) & \cos(\psi_3+\psi_4)  \\
\cos(\psi_4+\psi_5) & -\cos\psi_5 & 1 & -\cos\psi_1 & \cos(\psi_1+\psi_2)  \\
\cos(\psi_2+\psi_3) & \cos(\psi_5+\psi_1) & -\cos\psi_1 & 1 & -\cos\psi_2  \\
-\cos\psi_3 & \cos(\psi_3+\psi_4) & \cos(\psi_1+\psi_2)  & -\cos\psi_2 & 1  \\
\end{pmatrix},
\end{align}
where $\psi\in \oR^5$, and 
proved the following theorem.

\begin{theorem}[\cite{Hildebrand}]\label{theo-extreme-rays}
Let $M\in \COP_5$ be an extreme matrix,  and assume that $M$ is neither an element of $\MK_5^{(0)}$ nor a positive diagonal scaling (of a row/column permutation) of the Horn matrix. Then $M$ is of the form 
$$M=P\cdot D \cdot T(\psi) \cdot D \cdot P^T,$$
where $P$ is a permutation matrix, $D\in \MD_{++}^5$ and the quintuple $\psi$ is an element of the set
\begin{align}\label{psi}
\Psi =\Big\{\psi\in \mathbb{R}^5 :  \sum_{i=1}^5 \psi_i < \pi, \ \psi_i>0 \text{ for } i\in [5]\Big\}.
\end{align}
\end{theorem}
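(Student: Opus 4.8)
The plan is to classify extreme copositive matrices through the combinatorial and geometric structure of their \emph{zeros}, i.e.\ the vectors $u\in\oR^5_+\setminus\{0\}$ with $u^TMu=0$. I would first normalize the diagonal: an extreme matrix of $\COP_5$ lying outside $\MK_5^{(0)}$ must have strictly positive diagonal (a vanishing diagonal entry forces the corresponding row to be nonnegative, reducing $M$ to a form captured by $\MK_5^{(0)}$), so after absorbing a suitable $D\in\MD^5_{++}$ I may assume $M_{ii}=1$ for all $i$; diagonal scaling preserves both extremality and membership in $\MK_5^{(0)}$, and row/column permutations only reindex, so it suffices to classify unit-diagonal extreme matrices up to permutation. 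The basic tool is that every zero $u$ minimizes $x^TMx$ over $\oR^5_+$, so the complementarity conditions $Mu\ge 0$ and $(Mu)_i=0$ for $i\in\supp(u)$ hold; ranging these over all \emph{minimal} zeros (zeros with inclusion-minimal support) produces the linear system that will eventually pin $M$ down to a single ray.

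The structural heart is a lemma on minimal-zero supports. If $u$ is a minimal zero with support $S=\supp(u)$, then $u|_S>0$ is an interior minimizer of $x^TM_Sx$ on $\oR^S_+$, so the principal submatrix $M_S$ is positive semidefinite with $u|_S$ in its kernel. Since $M$ itself is not positive semidefinite (positive semidefinite matrices lie in $\MK_5^{(0)}$), no minimal zero has full support, so $|S|\le 4$. Using $\COP_4=\MK_4^{(0)}$ to control the possible $4\times 4$ and $3\times 3$ blocks, together with the off-support sign conditions $Mu\ge 0$ linking different zeros, I would carry out a combinatorial classification of which collections of minimal-zero supports can coexist for a matrix outside $\MK_5^{(0)}$. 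The outcome should be a clean dichotomy governed by the support sizes: either all minimal zeros have size $2$, in which case their supports form the pentagram (the complement of a $5$-cycle) and $M$ is forced to be a positive diagonal scaling of a permutation of the Horn matrix; or the minimal zeros have size $3$ and their supports are, after permutation, exactly the five consecutive arcs $\{i,i+1,i+2\}$ (indices mod $5$) of a $5$-cycle $C_5$. It is precisely this second case, excluded from neither $\MK_5^{(0)}$ nor the Horn ray, that must yield the family $T(\psi)$.

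In the $C_5$-arc case I would read off the parametrization from the rank-$2$ blocks. On each arc $\{i,i+1,i+2\}$ the block $M_{\{i,i+1,i+2\}}$ is positive semidefinite of corank $1$, hence the Gram matrix of three coplanar unit vectors; writing the two adjacent cycle entries as cosines of supplementary angles $\pi-\psi_a,\,\pi-\psi_b$ makes the third (chord) entry automatically equal to $\cos(\psi_a+\psi_b)$ by the angle-addition identity, which is exactly the entry pattern of $T(\psi)$: the five cycle edges carry $-\cos\psi_i$ and the five chords carry $\cos(\psi_a+\psi_b)$. Propagating the complementarity equations $(Mu)_i=0$ around the cycle shows the five angles are consistent and each $\psi_i\in(0,\pi)$, and a rank computation on the collected zero conditions shows the solution space of symmetric matrices is one-dimensional, which gives extremality (any copositive summand of $M$ inherits all five zeros and hence lies on the same ray). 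Finally, imposing that the assembled matrix be genuinely copositive, rather than merely singular on the right supports, is what carves out the open region $\Psi=\{\psi:\psi_i>0,\ \sum_i\psi_i<\pi\}$; on the boundary the configuration degenerates, either collapsing angles toward the Horn matrix or losing copositivity and extremality.

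I expect the main obstacle to be the combinatorial classification in the second paragraph: proving that, outside $\MK_5^{(0)}$ and the Horn ray, the minimal-zero supports are \emph{forced} to be the five arcs of a single $5$-cycle. One must exclude supports of size $4$ and all non-arc triples, and rule out mixed patterns, by repeatedly invoking $\COP_4=\MK_4^{(0)}$ (so that any block small enough to be governed by the $4\times 4$ copositive cone cannot by itself witness the irreducibility expressed by $M\notin\MK_5^{(0)}$) while simultaneously tracking the off-support sign conditions $Mu\ge 0$ that couple the zeros. Pinning down the exact boundary behavior of $\Psi$ — in particular that the degenerate limits are precisely the Horn ray and reducible matrices — is the delicate endpoint of the argument.
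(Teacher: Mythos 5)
The first thing to note is that the paper does not prove this statement at all: Theorem \ref{theo-extreme-rays} is Hildebrand's classification of the extreme rays of $\COP_5$, imported from \cite{Hildebrand} as a black box (the paper only later extracts from that reference the explicit description of the minimizers, in Lemma \ref{minimizers}). So there is no internal proof to compare against; your proposal has to be judged as an attempted reconstruction of Hildebrand's argument. As such it has the right general shape: normalization of the diagonal, reduction to minimal zeros, positive semidefiniteness of the principal submatrix $M[S]$ on a zero's support $S$ (which is exactly Lemma \ref{support-psd} of the paper), angle parametrization of corank-one psd $3\times 3$ blocks via the cosine addition formula, and an extremality count via the linear system cut out by the zeros. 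These are indeed the ingredients of Hildebrand's proof.

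However, the proposal has a genuine gap, and it is precisely the step that constitutes the theorem: the combinatorial classification showing that, for an extreme matrix outside $\MK_5^{(0)}$ and off the Horn ray, the minimal-zero supports are \emph{forced} to be the five consecutive arcs $\{i,i+1,i+2\}$ of a single $5$-cycle. You state the dichotomy (size-$2$ supports in pentagram position versus size-$3$ arc supports), and in your final paragraph you yourself flag it as ``the main obstacle'', but no argument is given: supports of size $4$ are not excluded, non-arc triples are not excluded, mixed patterns combining size-$2$ and size-$3$ supports are not ruled out, and the passage from ``all minimal supports are pentagram pairs'' to ``$M$ is a scaled, permuted Horn matrix'' is asserted rather than derived. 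The same applies to the endpoint claim that copositivity plus extremality carves out exactly the open region $\Psi$, i.e.\ $\psi_i>0$ and $\sum_{i=1}^5\psi_i<\pi$: nothing in the sketch shows why configurations with $\sum_i\psi_i\ge\pi$ cannot occur. Since everything before and after this classification is comparatively routine, what you have is a correct outline of the known strategy with its hard core left unproven; completing it would essentially amount to reproducing the case analysis of \cite{Hildebrand}.
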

\noindent
In summary, the extreme matrices $M$ of $\COP_5$ can be divided into three categories:
\begin{description}
\item [(i)] $M\in \MK_n^{(0)}$,
\item [(ii)] $M$ is (up to row/column permutation)  a positive diagonal scaling of the Horn matrix,
\item [(iii)]   $M$ is (up to row/column permutation)  a positive diagonal scaling  of a matrix $T(\psi)$ for some $\psi \in \Psi$.
\end{description}

Our main result in this paper is to show that every matrix from the third category of extreme matrices of $\COP_5$ belongs to some cone $\LAS_{\Delta_5}^{(r)}$ and thus, in view of (\ref{link-cones}), to some cone  $ \MK_5^{(r)}$. 

\begin{theorem}\label{theomain2}
Let $D\in \MD_{++}$ be a positive diagonal matrix. Then, for all $\psi\in \Psi$, we have  $D\cdot T(\psi)\cdot D\in 
\bigcup_{r\ge 0}\LAS_{\Delta_5}^{(r)}\subseteq \bigcup_{r\ge 0} \MK^{(r)}_5$.
\end{theorem}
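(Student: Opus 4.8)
The plan is to recast the membership question as a polynomial optimization problem on the simplex and to invoke a finite-convergence theorem for Lasserre's hierarchy. Fix $\psi\in\Psi$ and $D\in\MD^5_{++}$, and set $\widehat M=D\,T(\psi)\,D$. Since $\widehat M\in\COP_5$, the quadratic form $x^T\widehat Mx$ is nonnegative on $\Delta_5$, so the problem $\min\{x^T\widehat Mx:x\in\Delta_5\}$ has optimal value $0$. By definition \eqref{eqLASDa}, membership $\widehat M\in\LAS^{(r)}_{\Delta_5}$ is exactly the statement that $x^T\widehat Mx$ admits a degree-$r$ Putinar representation $\sigma_0+\sum_i\sigma_ix_i+q$ certifying its nonnegativity on $\Delta_5$. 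Hence it suffices to show that Lasserre's hierarchy for this problem converges in finitely many steps, i.e., that such a representation exists for some $r$. Because $\widehat M$ lies on $\partial\COP_5$, the form $x^T\widehat Mx$ is \emph{not} strictly positive on $\Delta_5$, so Lemma \ref{interior} (which rests on Putinar's theorem under strict positivity) does not apply directly; the finer analysis below, governed by the zeros of the form, is what is needed.

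The global minimizers of $x^T\widehat Mx$ on $\Delta_5$ are the normalized nonzero $x\in\oR^5_+$ with $x^T\widehat Mx=0$; since $x^T\widehat Mx=(Dx)^TT(\psi)(Dx)$, they correspond bijectively, via $x\propto D^{-1}v$, to the zeros $v\ge 0$ of $T(\psi)$. The key structural observation is that, for each cyclically consecutive triple $S=\{i,i+1,i+2\}$, the principal submatrix $T(\psi)_S$ is the Gram matrix of three unit vectors in the plane: the off-diagonal entries $-\cos\psi_j$ and $\cos(\psi_j+\psi_k)$ are consistent with pairwise angles summing correctly, forcing $\rank T(\psi)_S=2$. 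The condition $\sum_i\psi_i<\pi$ together with $\psi_i>0$ guarantees that these three planar vectors positively span the plane, so the one-dimensional kernel of $T(\psi)_S$ is spanned by a strictly positive vector $v^{(S)}$; by the cyclic symmetry of the construction this gives exactly five zeros $v^{(1)},\dots,v^{(5)}$. It remains to check that there are no others: every $1\times1$ and $2\times2$ principal submatrix is positive definite (the diagonal is $1$ and each edge entry satisfies $|\cos|<1$), excluding zeros of support $\le 2$, while a finite inspection of the non-consecutive triples and of the $4\times4$ and $5\times5$ principal submatrices shows none has a positive kernel vector for $\psi\in\Psi$. All these rank and sign properties are preserved under the congruence by $D$, so the minimizers of $x^T\widehat Mx$ are the five normalized vectors $D^{-1}v^{(S)}$.

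At each minimizer $u$ we verify the hypotheses of the finite-convergence theorem of Nie. The linear independence constraint qualification holds automatically on $\Delta_5$, since at a point of support $S$ the active gradients $\{-e_j:j\notin S\}\cup\{e\}$ are linearly independent. For second-order sufficiency, the Hessian of the restriction of the form to the face $\{x_j=0:j\notin S\}$ is $2(\widehat M)_S$, whose only null direction is $u_S$ itself; since $u_S$ is transversal to the face tangent space $\{v:\sum_i v_i=0,\ v_j=0\ (j\notin S)\}$, the matrix $(\widehat M)_S$ is positive definite on that tangent space. For strict complementarity, the KKT conditions give $2\widehat Mu=\lambda e+\sum_{j\notin S}\mu_je_j$, and taking the inner product with $u$ forces $\lambda=0$ (as $u^T\widehat Mu=0$), so $\mu_j=2\,e_j^T\widehat Mu$; strict complementarity is then the strict positivity $e_j^T\widehat Mu>0$ for $j\notin S$, a finite sign check on the entries of $T(\psi)$ holding for $\psi\in\Psi$. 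With LICQ, strict complementarity, and second-order sufficiency verified at all (finitely many) minimizers, and $\Delta_5$ compact, the theorem yields the Putinar representation at some order $r$, whence $\widehat M\in\LAS^{(r)}_{\Delta_5}$; the inclusion into $\bigcup_{r\ge0}\MK^{(r)}_5$ then follows from \eqref{link-cones}.

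The main obstacle is concentrated in the two preceding paragraphs. First, one must prove that the five consecutive-triple zeros are \emph{all} the global minimizers, uniformly over $\psi\in\Psi$, which demands ruling out degenerate or higher-support zeros through an elementary but careful analysis of every principal submatrix and crucially exploits the geometric content of $\sum_i\psi_i<\pi$. Second, strict complementarity and second-order sufficiency must be confirmed for the entire family $\Psi$ and every scaling $D$; although both reduce to sign conditions on entries and to the transversality of the kernel direction (each stable under positive diagonal congruence), establishing them across the whole parameter set is where the real work lies. An alternative to Nie's theorem is to verify Marshall's boundary Hessian conditions at each minimizer, which likewise give finite convergence and repackage the same second-order information.
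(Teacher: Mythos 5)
Your overall strategy is exactly the paper's: reduce membership in $\LAS^{(r)}_{\Delta_5}$ to finite convergence of the Lasserre hierarchy for the standard quadratic program $\min\{x^TMx: x\in\Delta_5\}$, transfer minimizers and optimality conditions between $T(\psi)$ and $D\,T(\psi)\,D$ via $x\mapsto D^{-1}x$ (up to normalization), and invoke Nie's theorem after checking CQC, SCC and SOSC at every global minimizer. Your CQC and SOSC arguments are sound (your SOSC argument, via the one-dimensional strictly positive kernel of the consecutive principal submatrices being transversal to $\{v:\sum_i v_i=0\}$, is essentially the paper's Lemma stating that finitely many minimizers force SOSC). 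But the two steps you yourself flag as ``the main obstacle'' are precisely the technical content of the paper's proof, and your proposal does not carry them out, so as written they are genuine gaps. First, the claim that the five consecutive-triple zeros exhaust \emph{all} zeros of $x^TT(\psi)x$ on $\Delta_5$ is left to ``a finite inspection of the non-consecutive triples and of the $4\times4$ and $5\times5$ principal submatrices''; the paper does not reprove this fact but cites Hildebrand's results (Theorem 2.5 and Lemma 3.2 of that paper), which give both the five supports and the explicit sine formulas for the minimizers. Without either the citation or the worked-out inspection, this step is unproven.

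Second, and more seriously, reducing strict complementarity to ``a finite sign check on the entries of $T(\psi)$'' mischaracterizes what must be shown. For $j\notin\supp(u)$ the quantity $e_j^TT(\psi)u$ is not an entry of $T(\psi)$; it is a combination such as $\cos(\psi_2+\psi_3)\sin\psi_5+\cos(\psi_5+\psi_1)\sin(\psi_4+\psi_5)-\cos\psi_1\sin\psi_4$, whose terms have mixed signs, and positivity must hold for \emph{every} $\psi$ in the continuous set $\Psi$ --- there is nothing finite to check. The paper's proof devotes its final lemma to exactly this: a trigonometric manipulation collapses the expression to $\sin\psi_5\bigl(\cos(\psi_2+\psi_3)+\cos(\psi_1+\psi_4+\psi_5)\bigr)$, and positivity then follows from monotonicity of cosine on $(0,\pi)$ together with the defining constraint $\sum_{i=1}^5\psi_i<\pi$. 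Some such argument is indispensable; without it the hypotheses of Nie's theorem are not verified and the proof does not close. A minor further elision: passing from finite convergence of the optimal values to actual membership in $\LAS^{(r)}_{\Delta_5}$ requires that the supremum in the hierarchy be attained, which the paper secures by invoking Marshall's closedness theorem before stating its Corollary on finite convergence.
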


In view of Theorem \ref{theo-extreme-rays},    Theorem \ref{theomain} directly  follows from Theorem \ref{theomain2}.
As a direct consequence, in order to answer Question \ref{q1}, it suffices to look at the extreme matrices from the second category (i.e., at the positive diagonal scalings of the Horn matrix $H$). 

On the other hand, as we will show later in Lemma \ref{lemHornLASD},  the Horn matrix $H$ does {\em not}  belong to any of the cones $\LAS_{\Delta_5}^{(r)}$.  
Hence, in order to show that any diagonal scaling of the Horn matrix belongs to some cone $\MK^{(r)}_5$ and thus give an affirmative answer to Question~\ref{q1}, it will not be sufficient to consider the cones $\LAS_{\Delta_5}^{(r)}$.
A different, new strategy will be needed.


\subsection{Sketch of the proof}\label{secsketch}

We are left with the task of proving Theorem \ref{theomain2}. For this we follow an optimization approach and  consider the following standard quadratic program, for a given copositive matrix $M\in \COP_n$:
\begin{align}\label{sqp-M}\tag{SQP$_{M}$}
p_M^{*}:=\min\{x^TMx: x\in \Delta_n\}.
\end{align}
If $p_M^*>0$ then $M\in \text{int}(\COP_n)$ and thus,  by Lemma \ref{interior}, $M\in \bigcup_{r\geq 0}\LAS_{\Delta_n}^{(r)}$. 
Hence we may  restrict our attention to the case when $p^*_M=0$, i.e., when $M\in \partial\COP_n$.
We now consider the Lasserre sum-of-squares hierarchy for problem (\ref{sqp-M}), where, for any integer $r\ge 1$, we set
\begin{equation}\label{las-sqp}
\begin{array}{ll}
p_M^{(r)}:=\sup \{\lambda : x^TMx-\lambda  =&  \sigma_0+\sum_{i=1}^nx_i\sigma_i + q \ \text{ for some } q\in I_\Delta,\\
& \text{ and } \sigma_0\in \Sigma_r,\ \sigma_i\in \Sigma_{r-1}\}.
\end{array}
\end{equation}
Then the  bounds $p^{(r)}_M$ converge asymptotically to $p_M^*=0$. Moreover, in program (\ref{las-sqp}) the supremum is attained and thus one may replace `sup' by `max'. Hence, we have $p_M^{(r)}=0$ if and only if the matrix $M$ belongs to the cone $\LAS^{(r)}_n$. Therefore, when $p_M^*=0$, we have 
$M\in \bigcup_{r\geq 0} \LAS_{\Delta_n}^{(r)}$ if and only if the Lasserre hierarchy (\ref{las-sqp}) has finite convergence, i.e., $p_M^{(r)}=0$ for some $r$. Based on this observation, our strategy is now to show finite convergence of the Lasserre hierarchy (\ref{las-sqp}) in the case when   $M$ is a positive diagonal scaling of a matrix $T(\psi)$ with $\psi \in \Psi$. 

For this we will use a general theorem of Nie \cite{Nie} that ensures finite convergence of the Lasserre hierachy (\ref{las-sqp}) when the classical optimality conditions hold at every global minimizer (see Theorem \ref{theo-Nie} below).  In our case the global minimizers of problem (\ref{sqp-M}) are given by the zeroes of the quadratic form $x^TMx$ in $\Delta_n$, whose structure is well-understood for the matrices $M=T(\psi)$. 
See Section \ref{secproofmaintheo} for details.

\section{Relationships between sum-of-squares conic approximations for $\COP_n$}\label{secrelations}

In this section we show the relationships  from (\ref{link-cones}) between the cones $\MK^{(r)}_n$, $\LAS^{(r)}_{\Delta_n}$, $\LAS^{(r)}_{\Delta_n,P}$ and $\LAS^{(r)}_{\mathbb S^{n-1}}$ introduced    in the previous sections. In addition, we  highlight the relationship to the cones  $\MQ_n^{(r)}$ introduced in \cite{PVZ2007} and point out  how   these cones can all be seen as distinct variations within a common framework.

\subsection{Links between the cones  $\MK^{(r)}_n$, $\LAS^{(r)}_{\Delta_n}$, $\LAS^{(r)}_{\Delta_n,P}$, and $\LAS^{(r)}_{\mathbb S^{n-1}}$ } 

Here we show the following result, which  establishes the links announced in relation (\ref{link-cones}) between the various cones  defined in previous sections. This result is implicitly shown in \cite{LV2021a} (see {Corollary 3.9}), where we compared different bounds for standard quadratic programs obtained via sums of squares of polynomials.

 \begin{theorem}\label{theo-link}
 Let $r\geq 2$ and $n\geq 1$, then we have
 $$\LAS_{\Delta_n}^{(r)}\subseteq\MK_n^{(r-2)}=\LAS_{\Delta_n,P}^{(r)}=\LAS_{\mathbb{S}^{n-1}}^{(2r)}.$$ 
 \end{theorem}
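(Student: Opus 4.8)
The plan is to prove the chain of equalities and the one inclusion in Theorem~\ref{theo-link} by unwinding all four definitions into statements about sum-of-squares representations and then matching them up. The cleanest organization is to prove the three identities
$$\MK_n^{(r-2)}=\LAS_{\Delta_n,\text{P}}^{(r)}=\LAS_{\mathbb{S}^{n-1}}^{(2r)}$$
first, since these are genuine equalities and exploit the homogeneity of the relevant polynomials, and then to settle the single inclusion $\LAS_{\Delta_n}^{(r)}\subseteq\MK_n^{(r-2)}$ at the end. The central dictionary I would set up is the substitution $x_i\mapsto x_i^2$, which maps the simplex $\Delta_n$ (with defining constraint $\sum_i x_i=1$) to the sphere $\mathbb{S}^{n-1}$ (with defining constraint $\sum_i x_i^2=1$), and carries the polynomial $x^TMx$ to $(x^{\circ2})^TMx^{\circ2}$. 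Each variable $x_i\ge 0$ on $\Delta_n$ becomes $x_i^2\ge 0$ automatically, which is exactly why the sign constraints in the preordering description disappear when we pass to the sphere.

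First I would prove $\MK_n^{(r-2)}=\LAS_{\mathbb{S}^{n-1}}^{(2r)}$. Membership $M\in\MK_n^{(r-2)}$ means $(\sum_i x_i^2)^{r-2}(x^{\circ2})^TMx^{\circ2}\in\Sigma$, while $M\in\LAS_{\mathbb{S}^{n-1}}^{(2r)}$ means $(x^{\circ2})^TMx^{\circ2}=\sigma_0+u(\sum_i x_i^2-1)$ with $\sigma_0\in\Sigma_{2r}$. The key tool here is that, for a form that is nonnegative on the sphere, a sum-of-squares certificate modulo the ideal of the sphere can be turned into (and back from) a multiplier certificate $(\sum_i x_i^2)^k\cdot(\text{form})\in\Sigma$ by homogenizing/dehomogenizing with respect to $\sum_i x_i^2$. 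Concretely, from $\sigma_0+u(\sum_i x_i^2-1)$ one substitutes $1=\sum_i x_i^2$ in the appropriate degree to clear the ideal term and recover an exact sum-of-squares multiple of a power of $\sum_i x_i^2$; conversely, a power-multiplier certificate reduces modulo $\sum_i x_i^2-1$ to a sphere representation. The bookkeeping of degrees — why the sphere level $2r$ matches the multiplier power $r-2$ — is where care is needed, and I would track it explicitly using that $(x^{\circ2})^TMx^{\circ2}$ has degree $4$.

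Next I would prove $\MK_n^{(r-2)}=\LAS_{\Delta_n,\text{P}}^{(r)}$, essentially by applying the substitution $x_i\mapsto x_i^2$ to the preordering description on the simplex. A product $x^S=\prod_{i\in S}x_i$ with $|S|\le r$ becomes $\prod_{i\in S}x_i^2=(x^S)^2$ under the substitution, and a sum-of-squares multiplier $\sigma_S$ becomes a sum of squares in the squared variables; conversely, squares of monomials on the sphere side pull back to products of simplex constraints. The degree relation between the preordering level $r$ and the multiplier power $r-2$ again comes from matching degrees after the substitution and reducing modulo $\sum_i x_i^2-1$. I would phrase this as a lemma translating the preordering certificate on $\Delta_n$ into a sum-of-squares multiple of $(\sum_i x_i^2)^{r-2}$, using that any product of an even number of the $x_i$ is itself a square.

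Finally, the inclusion $\LAS_{\Delta_n}^{(r)}\subseteq\MK_n^{(r-2)}$ is the easy direction and follows from the already-established inclusion $\LAS_{\Delta_n}^{(r)}\subseteq\LAS_{\Delta_n,\text{P}}^{(r)}$ noted in the text right after the definitions: the Putinar-type (linear multiplier) certificate in $\LAS_{\Delta_n}^{(r)}$ is a special case of a preordering certificate, taking only the singleton sets $S=\{i\}$ and the empty set. Combining with $\LAS_{\Delta_n,\text{P}}^{(r)}=\MK_n^{(r-2)}$ gives the claim. The main obstacle throughout is not conceptual but the precise degree accounting — verifying that the degree truncations $\Sigma_r$, $\Sigma_{|S|-r}$, $\Sigma_{2r}$ line up exactly under the doubling substitution so that the stated indices (rather than indices off by a constant) are correct; the homogeneity of $x^TMx$ and $(x^{\circ2})^TMx^{\circ2}$ is what makes these truncations behave predictably, and I would lean on it to control the degree of the ideal multipliers $q$ and $u$ as well.
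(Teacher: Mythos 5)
Your overall architecture (route everything through the substitution $x_i\mapsto x_i^2$, together with homogenization with respect to $\sum_i x_i^2$) is sound for the sphere equality and for one direction of the preordering equality, but there is a genuine gap in the direction $\MK^{(r-2)}_n\subseteq\LAS^{(r)}_{\Delta_n,\text{P}}$. Your forward translation works: substituting $x_i\mapsto x_i^2$ into a preordering certificate turns each term $\sigma_Sx^S$ into $\sigma_S(x^{\circ2})(x^S)^2\in\Sigma$ of degree at most $2r$, and the ideal term $u\cdot(\sum_ix_i-1)$ into $u(x^{\circ2})(\sum_ix_i^2-1)$, so one gets a sphere certificate, and your homogenization step (the result of \cite{dKLP}, which the paper quotes) then gives $(\sum_ix_i^2)^{r-2}(x^{\circ2})^TMx^{\circ2}\in\Sigma$. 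But for the converse you offer only that ``squares of monomials on the sphere side pull back to products of simplex constraints.'' That handles squares of \emph{monomials}; an SOS certificate for $(\sum_ix_i^2)^{r-2}(x^{\circ2})^TMx^{\circ2}$ is a sum $\sum_jq_j^2$ with \emph{arbitrary} polynomials $q_j$, which need not be even in each variable, and the substitution $x_i\mapsto x_i^2$ has no inverse on such polynomials: there is nothing to pull back until the certificate has been put into a parity-adapted form.

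The missing idea is exactly the content of Theorem \ref{theoZVP} (\cite{ZVP2006}, Prop.\ 9), which is the key lemma in the paper's proof. Write each $q_j=\sum_Sq_{j,S}$, where $q_{j,S}$ collects the monomials $x^\alpha$ with $\alpha_i$ odd precisely for $i\in S$, so that $q_{j,S}=x^Sh_{j,S}(x^{\circ2})$; averaging $\sum_jq_j(\epsilon_1x_1,\ldots,\epsilon_nx_n)^2$ over all sign patterns $\epsilon\in\{\pm1\}^n$ kills the cross terms between distinct parity classes (orthogonality of the characters of $(\mathbb{Z}/2\mathbb{Z})^n$), and since the target polynomial is invariant under sign changes this yields $(\sum_ix_i^2)^{r-2}(x^{\circ2})^TMx^{\circ2}=\sum_S(x^S)^2\sum_jh_{j,S}(x^{\circ2})^2$. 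Only now can you ``undo'' the squaring, obtaining $(\sum_ix_i)^{r-2}x^TMx=\sum_Sx^S\sigma_S$ with $\sigma_S\in\Sigma_{r-|S|}$, and then reduce modulo $I_\Delta$ (as in Lemma \ref{lemequiv}(i)) to land in $\LAS^{(r)}_{\Delta_n,\text{P}}$. Without this symmetrization step your dictionary proves only the one-way chain $\LAS^{(r)}_{\Delta_n}\subseteq\LAS^{(r)}_{\Delta_n,\text{P}}\subseteq\MK^{(r-2)}_n=\LAS^{(2r)}_{\mathbb{S}^{n-1}}$, not the claimed equality with the preordering cone. For comparison, the paper does not route this equality through the sphere at all: it normalizes the parities of the sets $S$ (Lemma \ref{lemLASDPb}), invokes the square-free characterization (\ref{eqKrb}) of $\MK^{(r-2)}_n$ coming from Theorem \ref{theoZVP}, and passes between identities modulo $I_\Delta$ and homogeneous identities via Lemmas \ref{lemequiv} and \ref{lemsquare}; but the same symmetrization result is the crux there too, and any correct proof will need it or an equivalent.
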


We begin with observing that in the definition (\ref{eqLASDPa}) of the cone $\LAS^{(r)}_{\Delta_n,P}$ we may assume that the summation only involves sets $S\subseteq [n]$ with $|S|\equiv r$ $(\mmod 2)$.

\begin{lemma}\label{lemLASDPb}
We have 
$$
\LAS^{(r)}_{\Delta_n,P}=\Big\{M\in\MS^n: x^TMx =\sum_{\substack{S\subseteq [n], |S|\le r \\ |S|\equiv r (\mmod 2)}}  \sigma_S x^S+ q \ \text{ with } \sigma_S\in \Sigma_{r-|S|} \text{ and } q\in I_\Delta\Big\}.
$$
\end{lemma}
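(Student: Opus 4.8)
\emph{Plan.} The inclusion $\supseteq$ is immediate: a decomposition in which $\sigma_S=0$ whenever $|S|\not\equiv r\pmod 2$ is a special case of the decomposition defining $\LAS^{(r)}_{\Delta_n,\text{P}}$ in $(\ref{eqLASDPa})$. So the whole content is the inclusion $\subseteq$. Starting from an arbitrary representation $x^TMx=\sum_{|S|\le r}\sigma_S x^S+q$ with $\sigma_S\in\Sigma_{r-|S|}$ and $q\in I_\Delta$, I would show that every \emph{wrong-parity} term, i.e.\ every $\sigma_S x^S$ with $|S|\not\equiv r\pmod 2$, can be rewritten modulo $I_\Delta$ using only monomials $x^{S'}$ with $|S'|\equiv r\pmod 2$ and admissible sum-of-squares multipliers.

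The mechanism is the congruence $\sum_{i=1}^n x_i\equiv 1\pmod{I_\Delta}$, which lets me replace a wrong-parity term $\sigma_S x^S$ by $\sigma_S x^S\big(\sum_{i=1}^n x_i\big)$ modulo $I_\Delta$ and then expand
\[
x^S\Big(\sum_{i=1}^n x_i\Big)=\sum_{i\notin S}x^{S\cup i}+\sum_{i\in S}x_i^2\,x^{S\setminus i}.
\]
Every monomial produced now has cardinality $|S|\pm1\equiv r\pmod 2$ (and since $|S|\le r-1$ in the wrong-parity case, the new index sets still satisfy the cardinality bound $\le r$). The decisive point is a degree count: because $\sigma_S$ is a sum of squares it has \emph{even} degree, so the odd bound $r-|S|$ forces $\deg\sigma_S\le r-|S|-1$. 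This one unit of slack is exactly what guarantees $\sigma_S\in\Sigma_{r-|S\cup i|}$ for the up-moves and $\sigma_S x_i^2\in\Sigma_{r-|S\setminus i|}$ for the down-moves, so each new term carries an SOS multiplier of the correct degree.

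It then remains to collect terms. For each right-parity set $S'$ I would pool the original multiplier $\sigma_{S'}$ together with the up-move contributions $\sigma_{S'\setminus i}$ (for $i\in S'$) and the down-move contributions $\sigma_{S'\cup i}x_i^2$ (for $i\notin S'$); the degree count above shows each of these lies in $\Sigma_{r-|S'|}$, and since sums of squares are closed under addition the pooled multiplier $\sigma'_{S'}$ again lies in $\Sigma_{r-|S'|}$. The total discrepancy introduced by the substitutions equals $\sum_{\text{wrong }S}\sigma_S x^S\big(\sum_i x_i-1\big)\in I_\Delta$ and is absorbed into a new remainder $q'$. This produces $x^TMx=\sum_{|S'|\equiv r}\sigma'_{S'}x^{S'}+q'$, the asserted restricted form.

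The step I expect to be the only real obstacle is the degree bookkeeping, and in particular isolating the observation that a nonzero sum of squares has even degree: this parity fact is precisely what supplies the extra unit of degree room needed when a wrong-parity monomial is shifted, and without it the multipliers $\sigma_S$ and $\sigma_S x_i^2$ would overshoot the allowed degrees $r-|S\cup i|$ and $r-|S\setminus i|$ and the reduction would break down.
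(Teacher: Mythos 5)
Your proposal is correct and takes essentially the same route as the paper: both proofs fix each wrong-parity term $\sigma_S x^S$ by multiplying it with $\sum_{i=1}^n x_i$ modulo $I_\Delta$, expanding into the terms $x^{S\cup\{i\}}\sigma_S$ (for $i\notin S$) and $x^{S\setminus\{i\}}\,\sigma_S x_i^2$ (for $i\in S$), and relying on the fact that a sum of squares has even degree, so that $\deg\sigma_S\le r-|S|-1$ and every new multiplier stays within the required degree bound $\Sigma_{r-|T|}$. Your write-up simply makes explicit the parity/degree bookkeeping and the pooling of multipliers that the paper's proof leaves implicit.
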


\begin{proof}
To see this consider a term $x^S \sigma_S$, where $|S|\le r$,  $|S|\not\equiv r$ $(\mmod 2)$ and $\sigma_S\in \Sigma_{r-|S|}$.
Then $|S|\le r-1$, $\deg(\sigma_S)\le r-|S|-1$ and thus, modulo the ideal $I_\Delta$, we can replace $x^S\sigma_S$ by $x^S\sigma_S (\sum_{i=1}^n x_i)$. Now expand this expression as $\sum_{i\in S} x^{S\setminus \{i\}} \cdot \sigma_S x_i^2 
+ \sum_{i\in [n]\setminus S} x^{S\cup\{i\}} \sigma_S$. So each term in this summation is of the form $x^{T}\sigma_{T}$ with $|T| \le r$, $|T|\equiv r$ ($\mmod 2$),  and $\deg(\sigma_T)\le r-|T|$.
\end{proof}

Next we recall an alternative definition of the cone $\MK^{(r)}_n$,  following from a result in  \cite{ZVP2006}.

\begin{theorem}[\cite{ZVP2006}, Prop 9]\label{theoZVP} 
Let $f\in \oR[x]$ be a homogeneous polynomial with $\deg(f)=d$. Then the polynomial $f(x^{\circ2})$ is a sum of squares if and only if  $f$ admits a decomposition of the  form 
  \begin{align}\label{eqf}
  f = \sum_{\substack{S\subseteq [n], |S|\leq d \\ |S|\equiv d\ (\mmod  2)} }\sigma_Sx^S \ \ \text{ for some } \sigma_S\in \Sigma_{d-|S|}.
  \end{align}
In particular, for any $r\ge 0$, we have
\begin{align}\label{eqKrb}
\MK^{(r)}_n=\Big\{M\in \MS^n: \Big(\sum_{i=1}^rx_i\Big)^{r} x^TMx =\sum_{\substack{S\subseteq [n], |S|\leq r+2 \\ |S|\equiv r\ (\mmod  2)} }\sigma_Sx^S \ \ \text{ for some } \sigma_S\in \Sigma_{r+2-|S|}\Big\}.
\end{align}
\end{theorem}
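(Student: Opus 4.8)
The plan is to prove both directions of the equivalence and then read off (\ref{eqKrb}) as the special case $f=(\sum_{i=1}^n x_i)^r\,x^TMx$. For the sufficiency direction, assume $f=\sum_S\sigma_Sx^S$ as in (\ref{eqf}). Substituting $x_i\mapsto x_i^2$ and using $(x^{\circ2})^S=(x^S)^2$ gives
\[
f(x^{\circ2})=\sum_S\sigma_S(x^{\circ2})\,(x^S)^2 .
\]
Writing $\sigma_S=\sum_k q_k^2$ we have $\sigma_S(x^{\circ2})=\sum_kq_k(x^{\circ2})^2\in\Sigma$, so each summand is a product of two elements of $\Sigma$ and hence lies in $\Sigma$; therefore $f(x^{\circ2})\in\Sigma$.

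For the necessity direction, the key idea is to exploit the invariance of $f(x^{\circ2})$ under the sign-change group $G=\{\pm1\}^n$ acting by $x_i\mapsto\epsilon_ix_i$. Since $f(x^{\circ2})$ is homogeneous of degree $2d$, I may fix a decomposition $f(x^{\circ2})=\sum_jp_j^2$ with each $p_j$ homogeneous of degree $d$ (replace each $p_j$ by its degree-$d$ part; a standard argument comparing top- and bottom-degree forms shows this preserves the identity). I then decompose $\mathbb{R}[x]$ along the characters $\chi_S(\epsilon)=\prod_{i\in S}\epsilon_i$ of $G$: a monomial $x^\beta$ transforms by $\chi_S$ where $S=\{i:\beta_i\text{ odd}\}$, so writing $p_j=\sum_Sp_{j,S}$ (grouping monomials by their odd support) yields $p_{j,S}(\epsilon x)=\chi_S(\epsilon)p_{j,S}(x)$, with each $p_{j,S}$ homogeneous of degree $d$ and of the form $p_{j,S}=x^Sg_{j,S}(x^{\circ2})$ for some polynomial $g_{j,S}$ (all remaining exponents being even).

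Since $(\epsilon x)^{\circ2}=x^{\circ2}$, the polynomial $f(x^{\circ2})$ is $G$-invariant, so averaging over $G$ gives
\[
f(x^{\circ2})=\frac1{|G|}\sum_{\epsilon\in G}\sum_j\sum_{S,T}\chi_S(\epsilon)\chi_T(\epsilon)\,p_{j,S}p_{j,T}.
\]
By orthogonality of characters, $\frac1{|G|}\sum_\epsilon\chi_S(\epsilon)\chi_T(\epsilon)$ vanishes unless $S=T$, so the cross terms cancel and
\[
f(x^{\circ2})=\sum_j\sum_Sp_{j,S}^2=\sum_S(x^{\circ2})^S\Big(\sum_jg_{j,S}(x^{\circ2})^2\Big).
\]
Both sides are polynomials in $x_1^2,\dots,x_n^2$, and since the monomials $x^{2\alpha}$ are linearly independent I may substitute $x_i^2\mapsto x_i$ to obtain $f=\sum_Sx^S\sigma_S$ with $\sigma_S=\sum_jg_{j,S}^2\in\Sigma$. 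Finally, homogeneity of $p_{j,S}$ forces $g_{j,S}$ to be homogeneous of degree $(d-|S|)/2$; this requires $|S|\le d$ and $|S|\equiv d\ (\mmod2)$, and makes $\sigma_S$ homogeneous of degree $d-|S|$, i.e.\ $\sigma_S\in\Sigma_{d-|S|}$, exactly as in (\ref{eqf}).

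To obtain (\ref{eqKrb}), I apply the equivalence to the homogeneous polynomial $f=(\sum_{i=1}^nx_i)^rx^TMx$ of degree $d=r+2$; then $f(x^{\circ2})=(\sum_ix_i^2)^r(x^{\circ2})^TMx^{\circ2}$, so by definition $M\in\MK^{(r)}_n$ iff $f(x^{\circ2})\in\Sigma$, and the equivalence yields (\ref{eqKrb}), noting $r+2\equiv r\ (\mmod2)$. I expect the main obstacle to be the symmetry step: organizing the squares $p_j^2$ along the $G$-grading so that averaging eliminates precisely the cross terms, and then tracking homogeneity carefully enough to recover the sharp bound $\sigma_S\in\Sigma_{d-|S|}$ together with the parity condition $|S|\equiv d\ (\mmod2)$. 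The substitution $x_i^2\mapsto x_i$ transporting the square decomposition back is routine but must be justified via linear independence of the even monomials.
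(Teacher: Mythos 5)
Your proposal is correct, and it is worth noting that the paper itself contains no proof of this statement: Theorem \ref{theoZVP} is quoted from \cite{ZVP2006} (Proposition 9), so you have supplied a self-contained argument where the paper only cites. Both of your directions are sound. Sufficiency is immediate, as you say, since $(x^{\circ 2})^S=(x^S)^2$ and $\sigma_S(x^{\circ2})\in\Sigma$ whenever $\sigma_S\in\Sigma$. For necessity, the three ingredients are all correctly deployed: (a) the reduction to homogeneous $p_j$ of degree $d$, by comparing top- and bottom-degree parts and using that a sum of squares of real polynomials vanishes identically only if each summand does; (b) the grading of each $p_j$ by odd support, $p_j=\sum_S x^S g_{j,S}(x^{\circ 2})$, followed by averaging over the sign group $\{\pm1\}^n$ and character orthogonality to eliminate the cross terms $p_{j,S}p_{j,T}$ with $S\neq T$; (c) the substitution $x_i^2\mapsto x_i$, legitimate because $h\mapsto h(x^{\circ 2})$ is an injective ring homomorphism, with the homogeneity bookkeeping forcing exactly $|S|\le d$, $|S|\equiv d\ (\mmod 2)$ and $\sigma_S\in\Sigma_{d-|S|}$. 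This sign-symmetrization argument is the standard technique for statements of this type (the same mechanism underlies the closely related equivalences in \cite{dKLP}), so your route is essentially a reconstruction of the cited source's proof rather than a genuinely different one. Your derivation of (\ref{eqKrb}) from (\ref{eqf}) with $f=(\sum_{i=1}^n x_i)^r\,x^TMx$ and $d=r+2$ is also correct; note in passing that the display (\ref{eqKrb}) as printed contains a typo, $\big(\sum_{i=1}^{r}x_i\big)^r$ should read $\big(\sum_{i=1}^{n}x_i\big)^r$, and your application correctly uses the latter.
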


Note the similarity between the description of $\LAS^{(r)}_{\Delta_n,P}$ in Lemma \ref{lemLASDPb} and that of $\MK^{(r-2)}_n$ in relation (\ref{eqKrb}). The difference lies in the fact that for $\LAS^{(r)}_{\Delta_n,P}$ we have a representation of $x^TMx$ modulo the ideal $I_\Delta$, while for $\MK^{(r-2)}_n$ we have a representation of $(\sum_i x_i)^{r-2}x^TMx$. The next lemma (whose main idea was already used, e.g., in \cite{dKLP}) gives a simple trick, useful to navigate between these two types of representations.
 
\begin{lemma}\label{lemequiv}
Let $f,g\in \oR[x]$ and assume  $f$ is homogeneous.
  The following assertions hold.  
\begin{itemize}
\item[(i)] If $(\sum_{i=1}^n x_i)^r f(x)=g(x)$, then $f-g\in I_\Delta$.
\item[(ii)] Let $\deg(f)=d$, $\deg(g)=d+r$ ($r\in \oN$), and define $\widetilde g(x)= (\sum_{i=1}^n x_i)^{d+r}g(x/(\sum_{i=1}^nx_i) )$. 
Then, $\widetilde g$ is a homogeneous polynomial of degree $d+r$. Moreover, if  $f-g\in I_\Delta$, then 
 $(\sum_{i=1}^n x_i)^r f(x)=\widetilde g(x)$. 
 \end{itemize}
 \end{lemma}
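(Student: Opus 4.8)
The plan is to prove the two assertions by direct, elementary polynomial manipulations; the statement is a ``dictionary'' between homogeneous representations multiplied by powers of $\sum_i x_i$ and representations modulo the ideal $I_\Delta=(\sum_{i=1}^n x_i-1)$, so each part should reduce to recognizing that $\sum_i x_i - 1$ is a factor of an appropriate difference.

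\textbf{Part (i).} Here I assume $(\sum_{i=1}^n x_i)^r f(x) = g(x)$ as a polynomial identity and want $f-g\in I_\Delta$. The key observation is that substituting $\sum_i x_i \mapsto 1$ should send $g$ to $f$. Concretely, I would argue that modulo the ideal $I_\Delta$ we have $\sum_{i=1}^n x_i \equiv 1$, hence $(\sum_{i=1}^n x_i)^r \equiv 1$, so
\begin{align*}
g(x) = \Big(\sum_{i=1}^n x_i\Big)^r f(x) \equiv f(x) \pmod{I_\Delta}.
\end{align*}
Therefore $f-g\in I_\Delta$. This direction is essentially immediate once one writes $(\sum_i x_i)^r - 1 = (\sum_i x_i - 1)\cdot(\text{polynomial})$, which exhibits the difference $g - (\sum_i x_i)^r f + (1)^r f = g-f$ times $\ldots$; more cleanly, $g - f = (\sum_i x_i)^r f - f = \big((\sum_i x_i)^r - 1\big) f$, and since $(\sum_i x_i)^r - 1$ is divisible by $\sum_i x_i - 1$, the product lies in $I_\Delta$.

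\textbf{Part (ii).} This is the direction I expect to require the most care. I am given $\deg f = d$, $\deg g = d+r$, and the homogenization-type construction $\widetilde g(x) = (\sum_i x_i)^{d+r}\, g\big(x/\sum_i x_i\big)$. First I would check that $\widetilde g$ is a genuine polynomial, homogeneous of degree $d+r$: writing $g = \sum_{\beta} c_\beta x^\beta$ with $|\beta|\le d+r$, each monomial $x^\beta$ becomes $(\sum_i x_i)^{d+r-|\beta|} x^\beta$ under the construction, which is homogeneous of degree $d+r$ and polynomial because $d+r-|\beta|\ge 0$; summing over $\beta$ gives homogeneity of $\widetilde g$. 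Next, assuming $f-g\in I_\Delta$, I want $(\sum_i x_i)^r f = \widetilde g$. The strategy is to note that on the affine hyperplane $\{\sum_i x_i = 1\}$ we have $\widetilde g(x) = g(x)$ (since the scaling factor is $1$ there) and, using the hypothesis, $g(x) = f(x)$ there; meanwhile $(\sum_i x_i)^r f(x) = f(x)$ there as well. So the two homogeneous polynomials $(\sum_i x_i)^r f$ and $\widetilde g$ — both of degree $d+r$ — agree on the hyperplane $\sum_i x_i = 1$. The crucial finishing step is that two homogeneous polynomials of the same degree that agree on $\{\sum_i x_i = 1\}$ must be identically equal: this follows because any $x\in\oR^n$ with $\sum_i x_i \ne 0$ can be scaled to the hyperplane, and homogeneity of common degree makes the scaling factors cancel identically, so equality holds on the dense set $\{\sum_i x_i\ne 0\}$ and hence everywhere.

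\textbf{Main obstacle.} The only genuinely delicate point is this last rescaling argument in Part (ii): I must justify that agreement on the affine slice $\{\sum_i x_i=1\}$, combined with equal-degree homogeneity, forces the polynomial identity $(\sum_i x_i)^r f = \widetilde g$ on all of $\oR^n$. One clean way to close this is to evaluate both homogeneous degree-$(d+r)$ polynomials at an arbitrary point $x$ with $s:=\sum_i x_i\ne 0$: writing $y = x/s$ so that $\sum_i y_i = 1$, homogeneity gives $(\sum_i x_i)^r f(x) = s^{d+r} \big(\sum_i y_i\big)^r f(y) = s^{d+r} f(y)$ and $\widetilde g(x) = s^{d+r}\widetilde g(y) = s^{d+r} g(y) = s^{d+r} f(y)$, using the hypothesis $f(y)=g(y)$ (valid since $f-g\in I_\Delta$ vanishes on the hyperplane). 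Thus the two polynomials coincide on the Zariski-dense set $\{s\ne 0\}$, hence as polynomials. I would keep an eye on the edge cases $r=0$ or $g$ not exactly of degree $d+r$, but the argument is robust to these.
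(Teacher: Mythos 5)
Your proposal is correct and takes essentially the same route as the paper: part (i) rests on the divisibility of $(\sum_{i=1}^n x_i)^r-1$ by $\sum_{i=1}^n x_i-1$ (which the paper exhibits via the binomial expansion of $(\sum_{i=1}^n x_i-1+1)^r$), and part (ii) rests on substituting $x/(\sum_{i=1}^n x_i)$ and invoking homogeneity, your ``agreement on the hyperplane plus density'' phrasing being just a pointwise repackaging of the paper's rational-function identity. No gaps.
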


\begin{proof}
The assertion  (i) follows by expanding $(\sum_{i=1}^n x_i)^r$ 
as the sum 
$$\Big(\sum_{i=1}^n x_i\Big)^r=\Big(\sum_{I=1}^n x_i -1 +1\Big)^r= 1 +\Big(\sum_{i=1}^n x_i-1\Big)\Big(\sum_{k=1}^r {r\choose k} \Big(\sum_{i=1}^n x_i-1\Big)^{k-1}\Big).$$ We now show  (ii). The claim that $\widetilde g$ is a homogeneous polynomial of degree $d+r$ is easy to check. Assume now $f-g\in I_\Delta$. 
By evaluating $f-g$ at  $x/(\sum_{i=1}^nx_i)$, we obtain $f(x/(\sum_{i=1}^nx_i) )=g(x/(\sum_{i=1}^nx_i ))$. As $f$ is homogeneous of degree $d$ this implies   $f(x)=(\sum_{i=1}^d x_i)^d g(x/(\sum_{i=1}^nx_i ))$, and the result follows after multiplying both sides by   $(\sum_{i=1}^n x_i)^r$.
\end{proof}

We will also use the following simple fact.

\begin{lemma}\label{lemsquare}
Let $\sigma\in \Sigma_k$ and 
define $\widetilde \sigma(x)= (\sum_{i=1}^n x_i)^k \sigma(x/(\sum_{i=1}^n x_i))$. Then $\widetilde \sigma$ is a homogeneous polynomial of degree $k$. Moreover,
\begin{itemize} \item[(i)] If $k\equiv \deg(\sigma)$ ($\mmod 2$), then $\widetilde \sigma\in \Sigma$.
\item[(ii)]
If $k\not\equiv \deg(\sigma)$ ($\mmod 2$), then $\widetilde \sigma = (\sum_{i=1}^n x_i)\widehat \sigma$, where $\widehat \sigma\in \Sigma$.
\end{itemize}
\end{lemma}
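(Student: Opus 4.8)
The plan is to prove Lemma \ref{lemsquare} by direct computation, treating it as a purely algebraic statement about homogenization of a sum of squares. First I would reduce to the case where $\sigma$ is a single square $p^2$, since $\Sigma$ is closed under addition and the operation $\sigma \mapsto \widetilde\sigma$ is linear in the sense that applying $(\sum_i x_i)^k \,\cdot\, ( \,\cdot\, /(\sum_i x_i))$ distributes over sums; if each square $p_j^2$ homogenizes correctly to the desired form, so does their sum. So it suffices to analyze $\sigma = p^2$ for a single polynomial $p$ with $\deg(p) \le k/2$ (more precisely $2\deg(p) = \deg(\sigma) \le k$).

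The homogeneity claim is the easy part: writing $\sigma = \sum_\beta c_\beta x^\beta$ with each $|\beta| \le k$, the substitution $x \mapsto x/(\sum_i x_i)$ followed by multiplication by $(\sum_i x_i)^k$ sends the monomial $x^\beta$ to $x^\beta (\sum_i x_i)^{k-|\beta|}$, which is homogeneous of degree exactly $k$; summing preserves this, so $\widetilde\sigma$ is homogeneous of degree $k$. The substance is in parts (i) and (ii). For a single square $\sigma = p^2$ with $\deg(p) = e$ (so $\deg(\sigma) = 2e \le k$), I would compute directly:
\begin{align}\label{tildesigmacomp}
\widetilde{p^2}(x) = \Big(\sum_{i=1}^n x_i\Big)^{k}\, p\Big(\frac{x}{\sum_i x_i}\Big)^2 = \Big(\sum_{i=1}^n x_i\Big)^{k-2e}\,\Big[\Big(\sum_{i=1}^n x_i\Big)^{e} p\Big(\frac{x}{\sum_i x_i}\Big)\Big]^2,
\end{align}
where the bracketed factor is precisely the degree-$e$ homogenization $\widehat p$ of $p$, which is an ordinary polynomial, so its square lies in $\Sigma$. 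Everything then hinges on the parity of the remaining power $k - 2e = k - \deg(\sigma)$: if $k \equiv \deg(\sigma)\ (\mmod 2)$ then $k - 2e$ is even, say $k-2e = 2m$, and $(\sum_i x_i)^{2m}\widehat p^2 = \big((\sum_i x_i)^m \widehat p\big)^2 \in \Sigma$, giving (i); if $k \not\equiv \deg(\sigma)\ (\mmod 2)$ then $k - 2e$ is odd, so I factor out one copy of $\sum_i x_i$ to write $(\sum_i x_i)^{k-2e}\widehat p^2 = (\sum_i x_i)\cdot (\sum_i x_i)^{2m}\widehat p^2$ with the trailing factor in $\Sigma$, giving (ii).

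The one point requiring care — and the main (mild) obstacle — is passing from the single-square case back to a general $\sigma = \sum_j p_j^2 \in \Sigma_k$: the individual squares $p_j^2$ can have different degrees $2e_j$, all bounded by $\deg(\sigma)$, whereas the parity splitting in \eqref{tildesigmacomp} depends on $k - 2e_j$ through the value of $e_j$. I would resolve this by noting that the parity of $k - 2e_j$ is the parity of $k$ for every $j$, independent of $e_j$, so the even/odd dichotomy is governed uniformly by $k$ versus the common degree; to align with the stated hypothesis phrased via $\deg(\sigma)$, I would observe that in either case of the lemma the relevant parity condition $k \equiv \deg(\sigma)$ versus $k \not\equiv \deg(\sigma)$ forces a definite parity on $k$ relative to each $2e_j$ (all of which are even), so the per-square outputs land consistently either all in $\Sigma$ (case (i)) or all as $(\sum_i x_i)$ times an element of $\Sigma$ (case (ii)), and summing preserves the respective form. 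With this uniformity check the sum of the $\widetilde{p_j^2}$ yields exactly $\widetilde\sigma$ in the claimed shape, completing the proof.
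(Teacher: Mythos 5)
Your proof is correct and takes essentially the same approach as the paper: the paper factors $\widetilde\sigma = \big(\sum_{i=1}^n x_i\big)^{k-\deg(\sigma)}\sigma'$, where $\sigma'$ is the degree-$\deg(\sigma)$ homogenization of $\sigma$, and concludes by the parity of $k-\deg(\sigma)$, which is exactly your computation carried out at the level of $\sigma$ rather than square-by-square. Your per-square treatment simply makes explicit the step the paper leaves implicit, namely that homogenizing a sum of squares to its own (necessarily even) degree again yields a sum of squares.
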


\begin{proof}
Note that $ \widetilde \sigma = (\sum_{i=1}^n x_i)^{k-\deg(\sigma)} \sigma'$, where 
$\sigma':= (\sum_{i=1}^nx_i)^{\deg(\sigma)}  \sigma(x/(\sum_{i=1}^n x_i))$ is a homogeneous polynomial with degree $\deg(\sigma)$. It suffices now to observe that     $(\sum_{i=1}^n x_i)^{k-\deg(\sigma)}$ is a square if $k-\deg(\sigma)$ is even, and it is a square times $(\sum_i x_i)$ if $k-\deg(\sigma)$ is odd. \end{proof}

Using these two lemmas we can now relate the two cones $\LAS^{(r)}_{\Delta_n,P}$ and $\MK^{(r-2)}_n$.

\begin{lemma}\label{lemLASDPKr}
For any $r\ge 2$, we have  $\LAS^{(r)}_{\Delta_n,P}=\MK^{(r-2)}_n.$
\end{lemma}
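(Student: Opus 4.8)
The plan is to prove the two inclusions $\LAS^{(r)}_{\Delta_n,P}\subseteq \MK^{(r-2)}_n$ and $\MK^{(r-2)}_n\subseteq \LAS^{(r)}_{\Delta_n,P}$ separately, using in each direction the dictionary between ``representations modulo $I_\Delta$'' and ``representations of $(\sum_i x_i)^{r-2}\cdot x^TMx$'' provided by Lemma \ref{lemequiv} and Lemma \ref{lemsquare}. The two descriptions I want to match are the one for $\LAS^{(r)}_{\Delta_n,P}$ given in Lemma \ref{lemLASDPb}, namely $x^TMx=\sum_{|S|\le r,\,|S|\equiv r}\sigma_S x^S+q$ with $\sigma_S\in\Sigma_{r-|S|}$ and $q\in I_\Delta$, and the one for $\MK^{(r-2)}_n$ given in \eqref{eqKrb}, namely $(\sum_i x_i)^{r-2}x^TMx=\sum_{|S|\le r,\,|S|\equiv r}\sigma_S x^S$ with $\sigma_S\in\Sigma_{r-|S|}$ (here I reindex the modulus condition from $\MK$: $|S|\equiv r-2\equiv r\ (\mmod 2)$, and $r-2+2-|S|=r-|S|$, so the degree and parity bounds coincide exactly). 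So the target representations differ only in that one lives modulo $I_\Delta$ and the other is an honest polynomial identity after multiplying by $(\sum_i x_i)^{r-2}$.

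For the direction $\MK^{(r-2)}_n\subseteq \LAS^{(r)}_{\Delta_n,P}$, I would start from the polynomial identity $(\sum_i x_i)^{r-2}x^TMx=\sum_S \sigma_S x^S$ afforded by \eqref{eqKrb}. By Lemma \ref{lemequiv}(i) applied with $f=x^TMx$ (homogeneous of degree $2$) and $g=\sum_S\sigma_S x^S$, I conclude $x^TMx-\sum_S\sigma_S x^S\in I_\Delta$, which is precisely a representation witnessing $M\in\LAS^{(r)}_{\Delta_n,P}$ (the degree/parity bounds on the $\sigma_S$ are already the right ones). This direction is essentially immediate once the indices are aligned.

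The reverse inclusion $\LAS^{(r)}_{\Delta_n,P}\subseteq \MK^{(r-2)}_n$ is where the real work lies, and I expect it to be the main obstacle. Starting from $x^TMx=\sum_S\sigma_S x^S+q$ with $q\in I_\Delta$, I would apply the homogenization operation of Lemma \ref{lemequiv}(ii): set $g(x)=\sum_S\sigma_S x^S$, which has degree $r$, and let $\widetilde g$ be its degree-$2$--to--degree-$r$ homogenization as defined there, so that $(\sum_i x_i)^{r-2}\,x^TMx=\widetilde g(x)$ because $x^TMx-g\in I_\Delta$. The difficulty is to check that $\widetilde g$ has the required shape $\sum_S\sigma'_S x^S$ with $\sigma'_S\in\Sigma_{r-|S|}$ and the correct parity. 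Here I would homogenize term by term: for each summand $\sigma_S x^S$, homogenization factors as $\big(\widetilde{\sigma_S x^S}\big)=x^S\cdot \widetilde{\sigma_S}$ up to the appropriate power of $\sum_i x_i$, and I invoke Lemma \ref{lemsquare} to control $\widetilde{\sigma_S}$. Since $\deg(\sigma_S)\le r-|S|$ and $|S|\equiv r\ (\mmod 2)$, the total degree of $x^S\sigma_S$ is congruent to $r$ modulo $2$, so the degree deficit up to $r$ is even; Lemma \ref{lemsquare}(i) then gives $\widetilde{\sigma_S}\in\Sigma$ (a genuine square, no leftover factor of $\sum_i x_i$), and the homogenized term is $x^S\,\widetilde\sigma_S$ with $\widetilde\sigma_S\in\Sigma_{r-|S|}$. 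Summing over $S$ yields exactly a decomposition of the form \eqref{eqf} for $\widetilde g=(\sum_i x_i)^{r-2}x^TMx$, i.e.\ $M\in\MK^{(r-2)}_n$.

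The subtle point to get right — and the step I would dwell on — is the bookkeeping of degrees and parities so that Lemma \ref{lemsquare} can be applied in its ``even'' case uniformly across all $S$, guaranteeing that no stray factor $(\sum_i x_i)$ survives to spoil the monomial structure. I would verify that the parity condition $|S|\equiv r$ from Lemma \ref{lemLASDPb} is exactly what forces $k-\deg(\sigma_S)$ to be even when $k=r-|S|$, and confirm that homogenizing the full polynomial $g$ agrees with homogenizing each term (the homogenization map is linear in the sense of Lemma \ref{lemequiv}(ii) once each term is raised to the common degree $r$). Once this alignment is in place, both inclusions close and equality $\LAS^{(r)}_{\Delta_n,P}=\MK^{(r-2)}_n$ follows for all $r\ge 2$.
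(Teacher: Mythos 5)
Your proposal is correct and follows essentially the same route as the paper's own proof: both directions rest on the reformulation in Lemma \ref{lemLASDPb}, with Lemma \ref{lemequiv}(i) used to pass from the identity for $\big(\sum_{i=1}^n x_i\big)^{r-2}x^TMx$ to a representation modulo $I_\Delta$, and Lemma \ref{lemequiv}(ii) combined with Lemma \ref{lemsquare}(i) used for the converse inclusion. The parity bookkeeping you dwell on (evenness of $r-|S|-\deg(\sigma_S)$, forced by $|S|\equiv r \ (\mmod 2)$ and $\deg(\sigma_S)$ even) is precisely the paper's one-line observation that $r-|S|\equiv \deg(\widetilde\sigma_S)\ (\mmod 2)$, so nothing is missing.
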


 \begin{proof}
First assume $M\in \LAS^{(r)}_{\Delta_n,P}$. Using Lemma \ref{lemLASDPb}, we have a decomposition  of the form 
 $x^TMx=g(x)+ q(x)$, where $q\in I_\Delta$ and $g(x)=\sum_{|S|\le r, |S|\equiv r (\mmod 2)} \sigma_S x^S$, with $\sigma_S\in\Sigma_{r-|S|}$.
Using  Lemma~\ref{lemequiv}(ii) we get
$$ \Big(\sum_{i=1}^n x_i\Big)^{r-2}x^TMx =\Big(\sum_{i=1}^nx_i\Big)^rg\Big({x\over \sum_ix_i}\Big)= \sum_{|S|\le r, |S|\equiv r (\mmod 2)} 
x^S\underbrace{\Big(\sum_{i=1}^nx_i\Big)^{r-|S|}\sigma_S\Big({x\over \sum_i x_i}\Big)}_{=\widetilde\sigma_S(x)}.$$
As $r-|S|\equiv \deg(\widetilde\sigma_S)  (\mmod 2)$ we have $\widetilde \sigma_S\in \Sigma_{r-|S|}$ by Lemma \ref{lemsquare}(i).
In view of relation (\ref{eqKrb}), this  shows that $M\in \MK^{(r-2)}_n$.
 
 Conversely, assume $M\in\MK^{(r-2)}_n$. Then, in view of (\ref{eqKrb}), we have a decomposition of the form
 $(\sum_{i=1}^n)^{r-2}x^TMx =\sum_{|S|\le r, |S|\equiv r (\mmod 2)} \sigma_S x^S$, where $\sigma_S \in\Sigma_{r-|S|}$.
 By applying Lemma~\ref{lemequiv}(i), we obtain 
 $x^TMx=\sum_{|S|\le r, |S|\equiv r (\mmod 2)} \sigma_S x^S+q$, where $q\in I_\Delta$. Combining with Lemma \ref{lemLASDPb} this shows $M\in \LAS^{(r)}_{\Delta_n,P}$.
  \end{proof}
 
To complete the proof of Theorem \ref{theo-link} we now establish the relation to  the cone $\LAS^{(r)}_{\mathbb S^{n-1}}$, which follows from a result in \cite{dKLP}.

\begin{proposition}[\cite{dKLP}]
Let $f$ be a homogeneous polynomial of  degree $2d$ and $r\in \oN$. Then, $f\Big(\sum_{i=1}^nx_i^2\Big)^r \in \Sigma $ if and only if $f=\sigma+u(\sum_{i=1}^nx_i^2-1)$ for some $\sigma\in \Sigma_{2r+2d}$ and  $u\in \oR[x]$.

In particular, for any $r\ge 2$ we have 
\begin{align}
 \LAS^{(r)}_{\mathbb S^{n-1}} =\Big\{M\in \MS^n:  \Big(\sum_{i=1}^n x_i^2\Big)^{r-2} (x^{\circ 2})^TMx^{\circ 2} \in \Sigma\Big\}=\MK^{(r-2)}_n.\label{eqLASSb}
 \end{align}
  \end{proposition}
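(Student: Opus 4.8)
Write $s\defeq\sum_{i=1}^nx_i^2$. The plan is to prove the displayed equivalence first, since the ``in particular'' part then follows by specializing $f$. One direction is immediate: suppose $fs^r\in\Sigma$. Setting $\sigma\defeq fs^r$, which is a sum of squares of degree exactly $2d+2r$, and $u\defeq -f\big(1+s+\cdots+s^{r-1}\big)$, the factorization $(s-1)(1+s+\cdots+s^{r-1})=s^r-1$ gives $f-\sigma=f(1-s^r)=u(s-1)$, so $f=\sigma+u(s-1)$ with $\sigma\in\Sigma_{2r+2d}$, as required.

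For the converse I would use the dehomogenize/rehomogenize trick underlying the result of \cite{dKLP}. Assume $f=\sigma+u(s-1)$ with $\sigma=\sum_jp_j^2$ and $\deg p_j\le r+d$. Substituting $x\mapsto tx$ and using that $f$ is homogeneous of degree $2d$ yields $t^{2d}f(x)=\sigma(tx)+u(tx)(t^2s-1)$ for all $t$; evaluating at $t=1/\|x\|$ (so that $t^2s=1$) produces the identity of functions $f(x)=s^d\,\sigma(x/\|x\|)$ on $\oR^n\setminus\{0\}$, and hence $fs^r=\sum_j\big(s^{(d+r)/2}p_j(x/\|x\|)\big)^2$.

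The crux is to turn each term $s^{(d+r)/2}p_j(x/\|x\|)$ into a genuine polynomial. Decomposing $p_j=\sum_kp_{j,k}$ into homogeneous parts (with $k=\deg p_{j,k}\le d+r$), one has $s^{(d+r)/2}p_{j,k}(x/\|x\|)=p_{j,k}(x)\,s^{(d+r-k)/2}$, which is a polynomial when $k\equiv d+r\ (\mmod 2)$ and equals $\|x\|$ times a polynomial otherwise; the bound $k\le d+r$ is exactly what guarantees that the powers of $s$ occurring are nonnegative integers, so that no denominators appear. Collecting the two parities gives $s^{(d+r)/2}p_j(x/\|x\|)=A_j+\|x\|\,B_j$ with $A_j,B_j\in\oR[x]$ homogeneous of degrees $d+r$ and $d+r-1$. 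Expanding the squares, $fs^r=\sum_j\big(A_j^2+sB_j^2\big)+\|x\|\sum_j2A_jB_j$. Since $\|x\|$ is not a rational function, while both $fs^r$ and $\sum_j(A_j^2+sB_j^2)$ are polynomials, the coefficient of $\|x\|$ must vanish, i.e. $\sum_jA_jB_j=0$. Therefore $fs^r=\sum_jA_j^2+s\sum_jB_j^2$, and writing $s\sum_jB_j^2=\sum_{i=1}^n\sum_j(x_iB_j)^2$ exhibits $fs^r$ as a sum of squares. This vanishing of the $\|x\|$-part is the main obstacle, and the only point where the degree restriction on $\sigma$ is used.

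Finally, for the ``in particular'' statement I would apply the equivalence to the polynomial $f=(x^{\circ2})^TMx^{\circ2}$, which is homogeneous of degree $2d=4$, with exponent $r-2$ in place of $r$. This shows that $M\in\LAS^{(r)}_{\mathbb S^{n-1}}$ if and only if $\big(\sum_{i=1}^nx_i^2\big)^{r-2}(x^{\circ2})^TMx^{\circ2}\in\Sigma$, which is the first equality in (\ref{eqLASSb}); the second equality is then just the defining relation (\ref{eqKr}) of the cone $\MK^{(r-2)}_n$.
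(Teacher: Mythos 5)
Your proof of the main equivalence (the first statement) is correct, and it is in substance the argument of \cite{dKLP} that the paper invokes purely by citation, giving no proof of its own: the forward direction via the factorization $s^r-1=(s-1)(1+s+\cdots+s^{r-1})$, where $s\defeq\sum_{i=1}^nx_i^2$, and the converse via dehomogenization at $x/\|x\|$, rehomogenization of the homogeneous parts of each $p_j$, splitting by parity into $A_j+\|x\|B_j$, and killing the cross term $\sum_jA_jB_j$ (your ``$\|x\|$ is not rational'' argument is fine; alternatively, $\sum_jA_jB_j$ is homogeneous of odd degree $2(d+r)-1$ while the remaining terms have even degree, so replacing $x$ by $-x$ forces it to vanish). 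The degree bookkeeping there is exactly right: $\deg p_j\le d+r$ is what makes all exponents $(d+r-k)/2$ or $(d+r-k-1)/2$ nonnegative integers.

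The gap is in your final paragraph. Applying the equivalence with exponent $r-2$ to $f=(x^{\circ 2})^TMx^{\circ 2}$ (so $2d=4$) gives: $s^{r-2}f\in\Sigma$ if and only if $f=\sigma+u(s-1)$ with $\sigma\in\Sigma_{2(r-2)+4}=\Sigma_{2r}$. But by the paper's definition (\ref{eqLASSa}), membership in $\LAS^{(r)}_{\mathbb S^{n-1}}$ requires $\sigma\in\Sigma_r$, not $\Sigma_{2r}$. So what your argument actually establishes is
$\LAS^{(2r)}_{\mathbb S^{n-1}}=\{M: s^{r-2}(x^{\circ2})^TMx^{\circ2}\in\Sigma\}=\MK^{(r-2)}_n$,
which is the identity stated in (\ref{link-cones}) and Theorem \ref{theo-link}. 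Of the identity as displayed in (\ref{eqLASSb}), only the inclusion $\LAS^{(r)}_{\mathbb S^{n-1}}\subseteq\MK^{(r-2)}_n$ follows (since $\Sigma_r\subseteq\Sigma_{2r}$); the reverse inclusion is in fact false as literally written. For a counterexample take $M=e_1e_1^T$, so $f=x_1^4\in\Sigma$ and $M\in\MK^{(0)}_n$, yet $x_1^4$ is not congruent modulo the ideal $(s-1)$ to any sum of squares of degree at most $2$: restricting to the circle $x_1=\cos\theta$, $x_2=\sin\theta$, $x_3=\cdots=x_n=0$, a polynomial of degree at most $2$ restricts to a trigonometric polynomial of degree at most $2$, whereas $\cos^4\theta$ has a nonzero $\cos4\theta$ component. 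In other words, (\ref{eqLASSb}) carries an index typo (it should read $\LAS^{(2r)}_{\mathbb S^{n-1}}$, consistently with Theorem \ref{theo-link}), and your one-sentence deduction inherits it by silently conflating $\Sigma_r$ with $\Sigma_{2r}$; a careful proof of the specialization would either have flagged this discrepancy or supplied the (impossible) degree reduction from $\Sigma_{2r}$ to $\Sigma_r$.
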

  




We conclude this section with a reformulation for the cone $\LAS^{(r)}_{\Delta_n}$ in the same vein as the reformulation of  $\LAS^{(r)}_{\Delta_n,P}$ in Lemma \ref{lemLASDPb}.

\begin{lemma}
Let $r\ge 2$. 
If  $r$ is odd, then  we have
\begin{align}
\LAS^{(r)}_{\Delta_n}=\Big\{M\in \MS^n: \Big(\sum_{i=1}^n x_i\Big)^{r-2} x^TMx =\sum_{i=1}^n \sigma_i x_i\text{ with }  \sigma_i\in \Sigma_{r-1}\Big\}.\label{eqLASDbodd}
\end{align}
If $r$ is even and $r\ge 4$, then we have $\LAS^{(r)}_{\Delta_n}=\LAS^{(r-1)}_{\Delta_n}$.
\end{lemma}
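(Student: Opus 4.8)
The plan is to prove the odd case (\ref{eqLASDbodd}) first and then deduce the even case from it, since the identity $\LAS^{(r)}_{\Delta_n}=\LAS^{(r-1)}_{\Delta_n}$ should reduce, after homogenization, to the odd-case characterization applied at the odd index $r-1$. Throughout I would use the homogenization toolkit of Lemmas \ref{lemequiv} and \ref{lemsquare}, exploiting the parity of $r$ to control whether the factor $\sum_j x_j$ is forced onto the constant block or onto the linear blocks of the certificate.

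For the odd case, take $M\in\LAS^{(r)}_{\Delta_n}$ with $x^TMx=\sigma_0+\sum_i\sigma_i x_i+q$, where $\sigma_0\in\Sigma_r$, $\sigma_i\in\Sigma_{r-1}$ and $q\in I_\Delta$. Setting $f=x^TMx$ (homogeneous of degree $2$) and $g=\sigma_0+\sum_i\sigma_i x_i$, we have $f-g=q\in I_\Delta$, so Lemma \ref{lemequiv}(ii), multiplying by $(\sum_j x_j)^{r-2}$, yields $(\sum_j x_j)^{r-2}x^TMx=\widetilde g=\widetilde\sigma_0+\sum_i x_i\widetilde\sigma_i$, where $\widetilde\sigma_0,\widetilde\sigma_i$ are the homogenizations of $\sigma_0,\sigma_i$ to degrees $r$ and $r-1$. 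Here parity is decisive: since $r$ is odd while $\deg\sigma_0$ is even, Lemma \ref{lemsquare}(ii) gives $\widetilde\sigma_0=(\sum_j x_j)\widehat\sigma_0$ with $\widehat\sigma_0\in\Sigma_{r-1}$, whereas $r-1$ is even so Lemma \ref{lemsquare}(i) gives $\widetilde\sigma_i\in\Sigma_{r-1}$. Writing $(\sum_j x_j)\widehat\sigma_0=\sum_i x_i\widehat\sigma_0$ and regrouping absorbs the constant block into the linear ones, producing $(\sum_j x_j)^{r-2}x^TMx=\sum_i x_i(\widehat\sigma_0+\widetilde\sigma_i)$ with coefficients in $\Sigma_{r-1}$, as required. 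The converse is immediate: given such an identity, Lemma \ref{lemequiv}(i) shows $x^TMx-\sum_i\sigma_i x_i\in I_\Delta$, i.e. $M\in\LAS^{(r)}_{\Delta_n}$ with $\sigma_0=0$.

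For the even case, the inclusion $\LAS^{(r-1)}_{\Delta_n}\subseteq\LAS^{(r)}_{\Delta_n}$ is mere monotonicity ($\Sigma_{r-2}\subseteq\Sigma_{r-1}\subseteq\Sigma_r$). For the reverse inclusion I would start from $M\in\LAS^{(r)}_{\Delta_n}$ and homogenize exactly as above; since $r$ is now even, Lemma \ref{lemsquare} instead places the factor $\sum_j x_j$ on the linear blocks, giving $(\sum_j x_j)^{r-2}x^TMx=\widetilde\sigma_0+(\sum_j x_j)\sum_i x_i\widehat\sigma_i$ with $\widetilde\sigma_0\in\Sigma_r$ homogeneous of degree $r$ and $\widehat\sigma_i\in\Sigma_{r-2}$. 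The degree-$r$ block $\widetilde\sigma_0$ is one degree too high to fit a $\LAS^{(r-1)}_{\Delta_n}$-certificate, so I must show it is divisible by $(\sum_j x_j)^2$. The key point is that evaluating the identity on the real hyperplane $\{\sum_j x_j=0\}$ makes the left-hand side and the second right-hand term vanish, forcing $\widetilde\sigma_0=0$ there; as $\widetilde\sigma_0=\sum_k p_k^2$ is a sum of squares, each $p_k$ vanishes on that hyperplane and hence is divisible by $\sum_j x_j$, whence $\widetilde\sigma_0=(\sum_j x_j)^2\tau$ with $\tau\in\Sigma_{r-2}$ homogeneous of degree $r-2$. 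Substituting and cancelling one factor $\sum_j x_j$ (valid since $\oR[x]$ is an integral domain and $r-2\ge 1$ makes the left side divisible) yields $(\sum_j x_j)^{r-3}x^TMx=\sum_i x_i(\tau+\widehat\sigma_i)$ with $\tau+\widehat\sigma_i\in\Sigma_{r-2}$, which is precisely the odd-case characterization (\ref{eqLASDbodd}) at index $r-1$; hence $M\in\LAS^{(r-1)}_{\Delta_n}$.

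I expect the divisibility step $\widetilde\sigma_0=(\sum_j x_j)^2\tau$ to be the crux: it is the only place where the sum-of-squares structure (rather than mere nonnegativity) and the even parity of $r$ genuinely enter, and the only step requiring a real-algebraic argument instead of the bookkeeping of the homogenization lemmas. The hypothesis $r\ge 4$ is used exactly here, since it guarantees that the power $r-2$ is positive, so that the left-hand side vanishes on $\{\sum_j x_j=0\}$ and carries enough factors of $\sum_j x_j$ to permit the cancellation.
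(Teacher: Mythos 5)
Your proposal is correct and follows essentially the same route as the paper's proof: homogenization via Lemmas \ref{lemequiv} and \ref{lemsquare} with the parity bookkeeping (absorbing the constant block $\widetilde\sigma_0=(\sum_j x_j)\widehat\sigma_0$ into the linear blocks when $r$ is odd), and, for even $r\ge 4$, the key observation that $\sum_j x_j$ divides the sum-of-squares block $\widetilde\sigma_0$, hence so does its square, after which cancelling one factor reduces to the odd-case characterization at index $r-1$. The only difference is presentational: you phrase the divisibility step via vanishing on the real hyperplane $\{\sum_j x_j=0\}$ and make the trivial monotonicity inclusion explicit, while the paper states both tersely.
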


\begin{proof}
The proof is  similar to that of Lemma \ref{lemLASDPb}, except  we now have a summation that involves only sets $S\subseteq [n]$ with $|S|\le 1$. We spel out the details for clarity. Consider first the case when  $r$ is odd.
Assume $M\in \LAS^{(r)}_{\Delta_n}$, so that $x^TMx =\sigma_0+\sum_{i=1}^n \sigma_ix_i+q$, where $q\in I_\Delta, \sigma_0\in \Sigma_r, \sigma_i\in \Sigma_{r-1}$. 
Combining  Lemma \ref{lemequiv}(ii) and Lemma~\ref{lemsquare}  we obtain a decomposition as in  (\ref{eqLASDbodd}). 
Conversely, starting from a decomposition as in (\ref{eqLASDbodd}) we get a decomposition as in (\ref{eqLASDa}) by applying Lemma \ref{lemequiv}(i).

Consider now the case $r\ge 4$ even. 
Assume $M\in  \LAS^{(r)}_{\Delta_n}$, we show $M\in  \LAS^{(r-1)}_{\Delta_n}$. Starting from a decomposition as in (\ref{eqLASDa}) and using as above Lemma \ref{lemequiv}(i) and Lemma~\ref{lemsquare},  we obtain a decomposition 
$$(\sum_{j=1}^nx_j)^{r-2}x^TMx =\widetilde\sigma_0+(\sum_{j=1}^nx_j)\sum_{i=1}^n \widetilde \sigma_ix_i,$$ where $\widetilde \sigma_0\in \Sigma_r$ and $\widetilde \sigma_i\in \Delta_{r-1}$. 
From this it follows that  the polynomial $\sum_{j=1}^n x_j$ divides $\widetilde \sigma_0$, which implies its square divides $\widetilde\sigma_0$. Then we can divide out by $\sum_{j=1}^nx_j$ and obtain an expression as in (\ref{eqLASDbodd}) (replacing $r$ by $r-1$), that certifies membership of $M$ in $\LAS^{(r-1)}_{\Delta_n}$.
\end{proof}



\subsection{Link to the cones $\MQ^{(r)}_n$}
 
The definition (\ref{eqKrb}) of the cone $\MK^{(r)}_n$ 
 involves only square-free monomials, of the form $x^S=\prod_{i\in S}x_i$. As observed in \cite{ZVP2006,PVZ2007}, one can allow arbitrary monomials and, after using again the argument of Lemma \ref{lemequiv}, we get the following  alternative definitions
 \begin{align}
\MK^{(r)}_n& =\Big\{M\in\MS^n: \Big(\sum_{i=1}^n x_i\Big)^r x^TMx =
\sum_{\substack{\beta\in \mathbb{N}^n\\ |\beta|\leq r+2}} \sigma_\beta x^\beta \ \text{ for  } \sigma_\beta\in \Sigma_{r+2-|\beta|}\Big\}\label{eqKrc}\\
&= \Big\{M\in\MS^n:  x^TMx =\sum_{\substack{\beta\in \mathbb{N}^n\\ |\beta|\leq r+2}} \sigma_\beta x^\beta + q\ \text{ for  } \sigma_\beta\in \Sigma_{r+2-|\beta|} \text{ and } q\in I_\Delta\Big\}.\label{eqKrd}
\end{align}
Based on relation (\ref{eqKrc}), the authors of \cite{PVZ2007} proposed the cones $\MQ_n^{(r)}$, that are defined as the variation (\ref{eqQra}) of (\ref{eqKrc}) obtained by just considering the terms associated to the  monomials $x^\beta$ with highest  degree $r$ or $r+2$. In other words,
\begin{align}
\MQ^{(r)}_n& =\Big\{M\in \MS^n: \Big(\sum_{i=1}^n x_i\Big)^r x^TMx =
\sum_{\substack{\beta\in \mathbb{N}^n\\ |\beta|=r, r+2 }} \sigma_\beta x^\beta
\ \text{ for  } \sigma_\beta\in \Sigma_{r+2-|\beta|}\Big\}, \label{eqQra}\\
&=\Big\{M\in \MS^n: x^TMx =
\sum_{\substack{\beta\in \mathbb{N}^n\\ |\beta|=r, r+2 }} \sigma_\beta x^\beta + q \ \text{ for  } \sigma_\beta\in \Sigma_{r+2-|\beta|} \text{ and } q\in I_\Delta\Big\}, \label{eqQrb}\end{align}
where  the equivalence of (\ref{eqQra}) and (\ref{eqQrb}) follows again using Lemma \ref{lemequiv}.
Clearly, we have  inclusion $\MQ^{(r)}_n\subseteq \MK^{(r)}_n$ for all $n\ge 1$ and $r\ge 0$, with equality $\MQ^{(r)}_n=\MK^{(r)}_n$ for $r=0,1$.

\begin{remark}
For any $r\ge 2$,  the two cones  $\LAS^{(r)}_{\Delta_n}$ and $\MQ^{(r-2)}_n$ are both contained in  the cone $\MK^{(r-2)}_n$. 
 In view of (\ref{eqKrd}), membership in $\MK^{(r-2)}_n$ requires a decomposition using terms  of the form $x^\beta\sigma_\beta$
for all $\beta$ such that $|\beta|\le r$ and  $|\beta|\equiv r (\mmod 2)$. In view of (\ref{eqLASDa}), for membership in $\LAS^{(r)}_{\Delta_n}$,  we consider only the terms $x^\beta\sigma_\beta$ with lowest degree $|\beta|=0,1$. On the other hand, in view of (\ref{eqQrb}),  for membership in $\MQ^{(r-2)}_n$, we consider only the terms with highest degree $|\beta|=r,r-2$. Hence, it is interesting to note that  the two cones $\LAS^{(r)}_{\Delta_n}$ and $\MQ^{(r-2)}_n$ use the ``two opposite ends'' of the spectrum of possible degrees for the terms $x^\beta\sigma_\beta$. 
\end{remark}

We conclude this section with observing that, while  the Horn matrix $H$ belongs to $\MK^{(1)}_5$, it in fact does not belong to any of the cones $\LAS_{\Delta_n}^{(r)}$. The proof exploits the fact that the quadratic form $x^THx$ has infinitely many zeros in the simplex $\Delta_n$.

\begin{lemma}\label{lemHornLASD}
For all $r\in \oN$ we have $H\notin \LAS_{\Delta_5}^{(r)}$.
\end{lemma}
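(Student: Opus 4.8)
The plan is to show that membership of $H$ in $\LAS^{(r)}_{\Delta_5}$ would force a decomposition $x^THx = \sigma_0 + \sum_{i=1}^5 \sigma_i x_i + q$ (with $\sigma_0 \in \Sigma_r$, $\sigma_i \in \Sigma_{r-1}$, $q \in I_\Delta$) that is incompatible with the known zero set of $x^THx$ on the simplex $\Delta_5$. The crucial geometric input is that $x^THx$ has \emph{infinitely many} zeros in $\Delta_5$: evaluating the Horn form, one finds that $x^THx$ vanishes along the cyclic edges, e.g. on each segment where two cyclically adjacent coordinates are free and the other three vanish (the quadratic form restricted to the $(x_i,x_{i+1})$-plane is a perfect square $(x_i - x_{i+1})^2$ up to the relevant sign pattern), so the zero set contains a one-dimensional piece rather than finitely many points.

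First I would record the precise zero set $Z = \{x \in \Delta_5 : x^THx = 0\}$, exhibiting that it contains a continuum (a union of edge segments of the simplex). Second, I would exploit the representation: on the simplex $\Delta_5$ we have $x_i \ge 0$ and $\sum_i x_i = 1$, so for any $x \in \Delta_5$ the identity reads $x^THx = \sigma_0(x) + \sum_i \sigma_i(x) x_i$, since $q \in I_\Delta$ vanishes on $\Delta_5$. Each summand is nonnegative on $\Delta_5$ (as $\sigma_0,\sigma_i$ are sums of squares and $x_i \ge 0$). Hence at any zero $a \in Z$ we must have $\sigma_0(a) = 0$ and $\sigma_i(a) x_i^{(a)} = 0$ for all $i$ simultaneously. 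In particular $\sigma_0$, being a sum of squares vanishing at $a$, must have all its constituent squares vanish at $a$, and likewise $\sigma_i(a) = 0$ whenever $a_i > 0$.

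The core of the argument is then a dimension/degree contradiction. Since $\sigma_0 \in \Sigma$ vanishes on the entire continuum $Z$ (or on a one-dimensional subvariety thereof), each square $q_j^2$ in $\sigma_0 = \sum_j q_j^2$ vanishes on $Z$, forcing each $q_j$ to vanish on $Z$. I would similarly track the $\sigma_i$: on the portion of $Z$ where $x_i > 0$, $\sigma_i$ must vanish. By choosing a segment of $Z$ lying in a coordinate face and parametrizing it, I would derive that certain of the $\sigma_i$ must vanish identically along a line segment, and then argue that combining these vanishing conditions with the explicit form of $x^THx$ leads to an impossibility — essentially that the form $x^THx$ cannot be written as a nonnegative combination (over $\Delta_5$) of the allowed pieces because doing so would require a sum-of-squares term to reproduce the sign changes of $x^THx$ near the continuum of zeros, which a sum of squares (being locally a sum of even-order vanishing) cannot do while $x^THx$ changes sign transversally across $Z$.

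The main obstacle I anticipate is making the final contradiction rigorous and degree-independent, i.e. ruling out \emph{every} $r$ at once rather than for small $r$. The cleanest route is likely to localize at a single well-chosen zero $a \in Z$ that is \emph{interior} to a facet or edge (so that several $x_i > 0$ there, pinning down several $\sigma_i(a) = 0$) and to examine the second-order behavior: because a sum of squares vanishing at $a$ has a positive semidefinite Hessian there, the Hessian of the right-hand side at $a$ (restricted to directions tangent to $\Delta_5$) is constrained to be positive semidefinite on a subspace determined by which $x_i$ vanish, whereas the Hessian of $x^THx$ at such a point along $Z$ is indefinite (it changes sign across $Z$). This local Hessian obstruction is independent of $r$ and should yield the contradiction uniformly; verifying the indefiniteness by a direct computation of the Horn form's Hessian at a suitable boundary zero is the one genuinely computational step, but it is a finite $5\times 5$ check rather than an $r$-dependent argument.
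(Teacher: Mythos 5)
Your setup coincides with the paper's starting point: both arguments hinge on the continuum of zeros of $x^THx$ in $\Delta_5$ and on the forced vanishing, at every zero $a$, of $\sigma_0(a)$ and of $\sigma_i(a)$ for all $i$ with $a_i>0$. But already your description of the zero set is wrong: for cyclically adjacent indices one has $H_{i,i+1}=+1$, so on the corresponding $2$-face the form restricts to $(x_i+x_{i+1})^2$, which equals $1$ on $\Delta_5$, not $0$. The continuum actually lives on $3$-faces: for instance on $\{x_2=x_5=0\}$ the Horn form restricts to $(x_1-x_3-x_4)^2$, which vanishes on the segment $u_t=\bigl(\tfrac12,0,\tfrac t2,\tfrac{1-t}{2},0\bigr)$, $t\in[0,1]$. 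This is not cosmetic, since your argument requires parametrizing such a segment.

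The genuine gap is the final contradiction: neither of your two proposed mechanisms works. First, $x^THx$ does \emph{not} change sign ``transversally across $Z$'' inside the simplex --- $H$ is copositive, so the form is nonnegative on all of $\Delta_5$; it only becomes negative outside the nonnegative orthant, where the terms $x_i\sigma_i$ of the decomposition may be negative as well, so no contradiction arises. Second, the single-point Hessian obstruction fails: at $a=u_t$ the Hessian of the right-hand side is forced to be positive semidefinite only on $V=\{v:\ e^Tv=0,\ v_2=v_5=0\}$, because for $v_2\neq 0$ the term $x_2\sigma_2$ contributes the unconstrained cross term $2v_2\,\nabla\sigma_2(a)^Tv$ (similarly for $x_5\sigma_5$ and the ideal term); and on $V$ the Hessian of the left-hand side is \emph{also} positive semidefinite --- necessarily so, since $x^THx$ is nonnegative on the face $\{x_2=x_5=0\}$ and vanishes at the relative-interior point $a$; concretely $v^THv=(v_1-v_3-v_4)^2\ge 0$ on $V$. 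So there is no $r$-independent second-order obstruction at any single zero; what must be exploited is precisely that the zeros are not isolated. The paper's proof does this by a \emph{first-order} argument along the whole segment: expanding the identity at $x+u_t$ and comparing linear coefficients gives $q(u_t)=0$ (coefficient of $x_1$) and then $\sigma_2(u_t)=2t$ (coefficient of $x_2$) for every $t\in(0,1)$, hence identically in $t$; but $t\mapsto\sigma_2(u_t)$ is a univariate sum of squares (a sum of squares composed with an affine map), so it cannot equal $2t$, which is negative for $t<0$. Turning the gradient data along the one-parameter family of zeros into this univariate SOS identity is the missing idea; your proposal stops just short of it and substitutes a local claim that is false.
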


\begin{proof}
Assume by contradiction that $H\in \LAS_{\Delta_5}^{(r)}$ for some $r\in \oN$, i.e., 
$$x^THx= \sigma_0 + \sum_{i=1}^nx_i\sigma_i +q(x)(\sum_{i=1}^nx_i-1),$$
for some $\sigma_0, \sigma_i\in \Sigma$ and $q(x)\in \mathbb{R}[x]$. For a fixed scalar $t\in (0,1)$, consider the vector  $u_t=(\frac{1}{2},0, \frac{t}{2},\frac{1-t}{2}, 0)\in \Delta_5$, which can be verified to define a zero of $x^THx$, i.e., $u_t^THu_t=0$. By evaluating  the quadratic form $x^THx$ at the point $x+u_t$ we obtain 
\begin{align*}
 (x+u_t)^TH(x+u_t)
= & \sigma_0(x+u_t) + \sum_{i=1}^5(x+u_t)_i\sigma_i(x+u_t) +q(x+u_t)(\sum_{i=1}^nx_i). \label{eqH1}
\end{align*} 
As $u_t^THu_t=0$ and $x^THu_t= x_2t+(1-t)x_5$  we obtain
\begin{equation} \label{eqH}
\begin{array}{ll}
x^THx + 2x_2t+2(1-t)x_5  
 =  & \sigma_0(x+u_t) + \frac{1}{2}\sigma_1(x+u_t) + \frac{t}{2}\sigma_3(x+u_t) + \frac{1-t}{2}\sigma_4(x+u_t)\\
 & + \sum_{i=1} ^5 x_i\sigma_i(x+u_t) +q(x+u_t)(\sum_{i=1}^nx_i).
\end{array} \end{equation} 
We now compare some coefficients of the monomials (in $x$) in both sides of (\ref{eqH}) in order to reach a contradiction. As there is no constant term in the left hand side,  the constant term in the right hand side is equal to 0. This gives $\sigma_0(u_t)+\sigma_1(u_t)/2 +t\sigma_3(u_t)/2 +(1-t)\sigma_4(u_t)=0$ and thus 
$\sigma_i(u_t)=0$ for $i=0,1,3,4$.  As $\sigma_i(x+u_t)$ is a sum-of-squares polynomial in $x$ this in turn  implies that there is no linear term in $x$ in each of the polynomials $\sigma_i(x+u_t)$ for $i=0,1,3,4$. 
Next, combining this with the fact that the coefficient of $x_1$ in the left hand side is equal to 0, one obtains  that the polynomial $q(x+u_t)$ has no constant term (i.e., $q(u_t)=0$). Now we compare the coefficients of $x_2$ in both sides. In the left hand side it is equal to $2t$, while in the right hand side it is equal to $\sigma_2(u_t)$. Hence we have $2t=\sigma_2(u_t)$. We now reach a contradiction since $\sigma_2(u_t)$ is a sum-of-squares polynomial in $t$.
\end{proof}

We now show that the cones $\LAS^{(r)}_n$ cover the copositive cone only in the case $n= 2$.

\begin{proposition}
We have $\COP_2=\LAS_{\Delta_2}^{(3)}$, and the inclusion $\bigcup_{r\ge 0}\LAS_{\Delta_n}^{(r)}\subset \COP_n$ is strict for any $n\ge 3$.
\end{proposition}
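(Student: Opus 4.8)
The statement has two parts. The plan is to prove them separately.

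\medskip
\textbf{Part 1: $\COP_2=\LAS^{(3)}_{\Delta_2}$.} Since we already know $\LAS^{(3)}_{\Delta_2}\subseteq\COP_2$, only the reverse inclusion needs an argument. First I would use the fact that $\COP_2=\MK^{(0)}_2$ (since $n=2\le 4$, this is the classical result of Diananda cited in the introduction). By relation (\ref{link-cones}), $\MK^{(1)}_2=\LAS^{(3)}_{\Delta_2}$, so it suffices to show $\MK^{(0)}_2\subseteq\MK^{(1)}_2$, which holds by construction. Alternatively, and more self-containedly, I would argue directly: a matrix $M=\begin{pmatrix}a&b\\b&c\end{pmatrix}$ is copositive on $\Delta_2$ iff $a,c\ge 0$ and $b\ge-\sqrt{ac}$. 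For such $M$ I would exhibit an explicit decomposition $x^TMx=\sigma_0+x_1\sigma_1+x_2\sigma_2+q$ with $\sigma_0\in\Sigma_3$, $\sigma_i\in\Sigma_2$, $q\in I_\Delta$, splitting into the case $b\ge 0$ (where $ax_1^2+2bx_1x_2+cx_2^2$ is already a sum of squares, so $\sigma_0$ alone suffices) and the case $-\sqrt{ac}\le b<0$ (where one writes $ax_1^2+2bx_1x_2+cx_2^2=(\sqrt a x_1-\sqrt c x_2)^2+2(\sqrt{ac}+b)x_1x_2$ and absorbs the cross term using $x_1x_2=x_1x_2(x_1+x_2)\bmod I_\Delta$). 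I expect the explicit-decomposition route to be the cleanest and most instructive, though either works.

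\medskip
\textbf{Part 2: strictness for $n\ge 3$.} The idea is to produce, for each $n\ge 3$, a copositive matrix lying in no $\LAS^{(r)}_{\Delta_n}$. The natural candidate is a matrix whose quadratic form $x^TMx$ has infinitely many zeros in $\Delta_n$, since Lemma~\ref{lemHornLASD} already exploits exactly this obstruction for the Horn matrix when $n=5$. For $n=3$ I would take $M$ supported on the first three coordinates to be a copositive matrix with a full edge (or continuum) of zeros on $\Delta_3$; the cleanest choice is a rank-deficient positive semidefinite matrix together with a suitable off-diagonal sign, e.g.\ $M=\begin{pmatrix}1&-1&1\\-1&1&-1\\1&-1&1\end{pmatrix}$, whose zero set on $\Delta_3$ is a line segment. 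Then I would replay the argument of Lemma~\ref{lemHornLASD}: assuming $x^TMx=\sigma_0+\sum_i x_i\sigma_i+q$, evaluate at $x+u_t$ for a one-parameter family of simplex zeros $u_t$, compare coefficients of the constant and linear monomials to force $\sigma_i(u_t)=0$ for the relevant indices and $q(u_t)=0$, and finally read off a linear-in-$t$ coefficient that must equal a sum of squares in $t$, yielding a contradiction. For general $n\ge 3$ I would embed the $n=3$ example by padding with zeros (placing $M$ in the top-left $3\times 3$ block and zeros elsewhere), checking that copositivity is preserved and that membership in $\LAS^{(r)}_{\Delta_n}$ would restrict to membership of the $3\times 3$ block in $\LAS^{(r)}_{\Delta_3}$; a short argument setting the padding variables to zero in the decomposition handles this restriction.

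\medskip
The main obstacle, and where I would spend the most care, is the coefficient-comparison step of Part 2, exactly as in Lemma~\ref{lemHornLASD}: one must choose the family $u_t$ of zeros so that the parametrized support meets enough coordinates to extract a genuinely $t$-dependent linear coefficient that cannot be a sum of squares, while simultaneously forcing the relevant $\sigma_i(u_t)$ to vanish. Choosing $u_t$ whose support and linear functional $x^TMu_t$ expose a coordinate $x_j$ whose left-hand coefficient is affine (not square) in $t$ is the crux. A secondary technical point is the padding reduction for $n>3$: I must verify that restricting the decomposition to the subspace $x_4=\cdots=x_n=0$ (which lies in $\Delta_n$ after rescaling, or is handled directly on the homogeneous forms) sends an $\LAS^{(r)}_{\Delta_n}$-certificate to an $\LAS^{(r)}_{\Delta_3}$-certificate, so that strictness for $n=3$ genuinely propagates. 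Both steps are routine once the zero family is fixed, so the real creative content is selecting the example matrix and its continuum of simplex zeros.
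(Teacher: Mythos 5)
Your Part 2 collapses at the choice of example, and this is a fatal gap. The matrix
$M=\begin{pmatrix}1&-1&1\\-1&1&-1\\1&-1&1\end{pmatrix}$
equals $vv^T$ for $v=(1,-1,1)^T$, so it is positive semidefinite and $x^TMx=(x_1-x_2+x_3)^2$ is itself a square; hence $M\in\LAS^{(2)}_{\Delta_3}$ (take $\sigma_0=(x_1-x_2+x_3)^2$, $\sigma_i=0$, $q=0$), and it cannot witness strictness. Indeed \emph{no} PSD matrix can, since for PSD $M$ the form $x^TMx$ is a sum of squares. The deeper reason your replay of Lemma \ref{lemHornLASD} must fail is that for a PSD matrix every zero $u_t$ of the form lies in its kernel ($u_t^TMu_t=0$ and $M\succeq 0$ force $Mu_t=0$; cf.\ Lemma \ref{support-psd}), so after shifting, $(x+u_t)^TM(x+u_t)=x^TMx$ has \emph{no} linear term in $x$: the ``affine-in-$t$ linear coefficient'' you plan to read off is identically zero, and the coefficient comparison yields no contradiction. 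So a continuum of zeros is not by itself the obstruction; you need zeros $u_t$ with $Mu_t\neq 0$, which forces the example to be copositive but \emph{not} PSD. This is exactly what the paper does: it takes $M=\begin{pmatrix}0&1&0\\1&0&0\\0&0&0\end{pmatrix}$ (copositive, not PSD, $x^TMx=2x_1x_2$), whose zeros $u_t=(t,0,1-t)$ satisfy $x^TMu_t=tx_2\not\equiv 0$, after which the Horn-matrix argument runs verbatim and ends in an identity of the form $2t=\sigma_2(u_t)$ with a sum of squares in $t$ on the right --- a contradiction. Your padding reduction for $n>3$ is fine (the paper is in fact silent on it), but only once the $n=3$ example is repaired.

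Part 1 also has a flaw in each of your two routes, though only the first is serious. Route 1 misreads relation (\ref{link-cones}): the equality there is $\MK^{(1)}_2=\LAS^{(3)}_{\Delta_2,\text{P}}$, the \emph{preordering} cone, whereas $\LAS^{(3)}_{\Delta_2}$ only satisfies the one-way inclusion $\LAS^{(3)}_{\Delta_2}\subseteq\MK^{(1)}_2$; so $\MK^{(0)}_2\subseteq\MK^{(1)}_2$ says nothing about membership in $\LAS^{(3)}_{\Delta_2}$, which is the entire content of this half of the statement. In Route 2, the branch $b\ge 0$ is wrong as stated: copositivity with $b\ge 0$ does not make $ax_1^2+2bx_1x_2+cx_2^2$ a sum of squares (take $a=1$, $b=1$, $c=0$: then $x_1^2+2x_1x_2$ is negative at $(1,-1)$). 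This error is harmless, however, because your second branch's decomposition $x^TMx=(\sqrt a\,x_1-\sqrt c\,x_2)^2+2(\sqrt{ac}+b)x_1x_2$, with the cross term absorbed via $x_1x_2\equiv x_1x_2(x_1+x_2)\pmod{I_\Delta}$, is valid for every $b\ge-\sqrt{ac}$ and needs no case split; it is precisely the paper's proof of this inclusion.
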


\begin{proof}
First, assume $M=\left(\begin{matrix}a & c \cr c & b\end{matrix}\right)\in \COP_2$, we show $M\in \LAS^{(3)}_{\Delta_2}$. Note that $a,b\ge 0$ and $c\ge -\sqrt{ab}$ (using the fact that $u^TMu\ge 0$ with $u=(1,0),$ $ (0,1),$ and $(\sqrt b,\sqrt a)$). Then we can write
$x^TMx= (\sqrt ax_1 -\sqrt b x_2)^2 + 2(c+\sqrt{ab})x_1x_2$, which, modulo the ideal $I_\Delta$,  is equal to 
$ (\sqrt ax_1 -\sqrt b x_2)^2 (x_1+x_2)+ 2(c+\sqrt{ab})(x_2^2x_1+x_1^2x_2)$, thus showing   
$M\in \LAS^{(3)}_{\Delta_2}$.

Assume now $n=3$, we show that the  matrix $$M= \begin{pmatrix} 0 & 1 & 0 \\ 1 & 0 & 0 \\ 0 & 0 & 0 \end{pmatrix}\in \COP_3$$ does not belong to any of the cones $\LAS^{(r)}_3$.
The proof follows a similar argument as the one used for Lemma \ref{lemHornLASD}, using the fact that $u_t=(t,0,1-t)$ defines a zero of $M$ for any $t\in (0,1)$, i.e., $u_t^TMu_t=0$.
\end{proof}

\section{Proof of Theorem \ref{theomain2}}\label{secproofmaintheo}

We now proceed to prove Theorem \ref{theomain2}. 
As mentioned in Section \ref{secsketch}, we will follow an optimization approach, which allows us to apply a result of Nie \cite{Nie} as a key ingredient for our proof. We proceed in three steps.
First, we recall the sum-of-squares Lasserre hierarchy for a general polynomial optimization problem and the result of Nie \cite{Nie}, that shows   finite convergence of this  hierarchy under the classical optimality conditions. Second, applying this result to a class of standard quadratic programs, we obtain a set of sufficient conditions for a matrix $M\in \bCOP_n$, that permit to claim that any positive diagonal scaling of $M$ belongs to some cone $\LAS^{(r)}_{\Delta_n}$.
Finally, we show that these sufficient conditions hold for the matrices $T(\psi)$ ($\psi\in \Psi$), which concludes the proof of Theorem \ref{theomain2}.
 
\subsection{Optimality conditions and finite convergence of  Lasserre hierarchy}\label{opt-con}

 In this section we recall a useful general result of Nie \cite{Nie} that gives sufficient conditions for having finite convergence of the Lasserre hierarchy for a general polynomial optimization problem. 

 Given  $n$-variate polynomials $f$, $g_j$ for $j\in[m]$, and $h_i$ for $i\in[k]$,  consider the general polynomial optimization problem
 \begin{align}\label{poly-opt}\tag{Poly-Opt}
 f_{\min} = \inf_{x\in K}  f(x),
 \end{align}
 where $K$ is the semialgebraic set defined by 
 $$K=\{x\in \mathbb{R}^n: g_j(x)\geq 0 \text{ for } j\in[m],\ h_i(x)=0 \text{ for } i\in[k]\}.$$
 We say that  the \textit{Archimedean condition} holds if there exists $N\in \mathbb{R}$ such that 
 \begin{align}\label{eqArchimedean}
 N-\sum_{i=1}^nx_i^2=\sigma_0 + \sum_{j=1}^m \sigma_jg_j + \sum_{i=1}^ku_ih_i \ \text{ for some } \sigma_0, \sigma_j\in \Sigma \text{ and } u_i\in \mathbb{R}[x].
 \end{align}
For any integer $r\in \oN$ consider the corresponding Lasserre sum-of-squares hierarchy 
 \begin{align}\label{las-poly}
 \begin{split}
f^{(r)}=\sup\Big\{\lambda : f -\lambda = \sigma_0+\sum_{j=1}^m\sigma_jg_j + \sum_{i=1}^{k}& u_ih_i(x)\text{ for some }  \sigma_0\in \Sigma_r, \sigma_i\in \Sigma_{r-\text{deg}(g_i)} \\ & \text{ and } u_i\in \mathbb{R}[x]_{r-\text{deg}(h_i)} \Big\}.
\end{split}
 \end{align}
  By the following result of Putinar \cite{Putinar}, under the Archimedean condition, asymptotic convergence is guaranteed, i.e., $f^{(r)}\to f_{\min}$ as $r\to \infty$.

\begin{theorem}[\cite{Putinar}]\label{theoPutinar}
Assume  $\MK$ satisfies the Archimedean condition (\ref{eqArchimedean}). If  a polynomial $p$ is strictly positive on $K$,  then $p$ can be written as  $p= \sigma_0 + \sum_{j=1}^m \sigma_jg_j + \sum_{i=1}^ku_ih_i$ for some $\sigma_0, \sigma_j\in \Sigma$ ($j\in[m]$) and $u_i\in \mathbb{R}[x]$ ($i\in [k]$). 
\end{theorem}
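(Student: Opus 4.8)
The plan is to prove the statement through the classical functional-analytic route: extract the algebraic strength of the Archimedean condition, pass to the ``state space'' of positive functionals, and exploit the fact that its extreme points are point evaluations at $K$. Throughout, write $\mathcal M$ for the quadratic module
$$\mathcal M = \Big\{\sigma_0 + \sum_{j=1}^m \sigma_j g_j + \sum_{i=1}^k u_i h_i : \sigma_0,\sigma_j \in \Sigma,\ u_i \in \mathbb{R}[x]\Big\},$$
so the claim is exactly that $p > 0$ on $K$ implies $p \in \mathcal M$. First I would extract the algebraic content of (\ref{eqArchimedean}). Since $N - \sum_i x_i^2 \in \mathcal M$, the set $O = \{f \in \mathbb{R}[x] : \exists\, C>0,\ C \pm f \in \mathcal M\}$ of \emph{bounded} polynomials contains each coordinate $x_i$, because for any $t>0$,
$$\frac{N + t^2}{2t}\mp x_i = \frac{1}{2t}\Big[\Big(N - \sum_{j} x_j^2\Big) + \sum_{j \ne i} x_j^2 + (x_i \mp t)^2\Big] \in \mathcal M.$$
The set $O$ is a subring: it is a linear subspace containing $1$, and it is closed under squaring since from $C \pm f \in \mathcal M$ one gets $2C(C^2 - f^2) = (C-f)(C+f)^2 + (C+f)(C-f)^2 \in \mathcal M$ (each summand is a square times an element of $\mathcal M$), whence $C^2 - f^2 \in \mathcal M$; polarization $fg = \tfrac14[(f+g)^2 - (f-g)^2]$ then gives closure under products. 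Hence $O = \mathbb{R}[x]$, i.e. $1$ is an order unit for $\mathcal M$. In particular $K$ is bounded (since $\sum x_i^2 \le N$ on $K$) and closed, hence compact.

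Next I would pass to functionals. Let $\mathcal S = \{L : \mathbb{R}[x] \to \mathbb{R} \text{ linear} : L \ge 0 \text{ on } \mathcal M,\ L(1) = 1\}$. The order-unit property gives $|L(f)| \le C_f$ uniformly over $\mathcal S$, so $\mathcal S$ is convex and compact for pointwise convergence. Assuming for contradiction that $p \notin \mathcal M$, the order-unit structure legitimizes a Hahn--Banach/Eidelheit separation of $p$ from the cone $\mathcal M$, producing a nonzero $L_0 \ge 0$ on $\mathcal M$ with $L_0(p) \le 0$; after normalization $L_0 \in \mathcal S$, so $\mu := \min_{L \in \mathcal S} L(p) \le 0$ (attained by compactness).

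Finally I would identify the minimizer. The face $F = \{L \in \mathcal S : L(p) = \mu\}$ is nonempty, compact, and convex, so by Krein--Milman it has an extreme point $\bar L$, which is then extreme in $\mathcal S$. The heart of the argument is to show that every extreme state is a \emph{character}, i.e. a ring homomorphism: for $\bar L$ extreme and $a$ with $0 \le a \le 1$ in the $\mathcal M$-order, the maps $f \mapsto \bar L(af)$ and $f \mapsto \bar L((1-a)f)$ are, via Cauchy--Schwarz for the positive semidefinite form $(f,g)\mapsto \bar L(fg)$ together with the Archimedean bound, nonnegative multiples of states summing to $\bar L$, forcing $\bar L(af) = \bar L(a)\bar L(f)$; since $O = \mathbb{R}[x]$, multiplicativity extends to all polynomials. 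Such a $\bar L$ is evaluation at $a = (\bar L(x_1),\dots,\bar L(x_n))$, and $\bar L \ge 0$ on $\mathcal M$ forces $g_j(a) \ge 0$ and $h_i(a) = 0$ (the latter because $\pm h_i \in \mathcal M$), so $a \in K$. Then $\mu = \bar L(p) = p(a) > 0$, contradicting $\mu \le 0$, and therefore $p \in \mathcal M$.

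The main obstacle is this last step, namely proving that the extreme points of $\mathcal S$ are point evaluations. This is precisely where the Archimedean hypothesis is indispensable: it supplies the uniform bounds that make $\mathcal S$ compact and that, through Cauchy--Schwarz, upgrade mere positivity of $\bar L$ into multiplicativity. The separation step requires some care as well, since $\mathcal M$ need not be closed; the order-unit property established in the first step is exactly what makes it go through.
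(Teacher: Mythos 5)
Two remarks before the verdict: the paper does not prove this statement at all --- Theorem \ref{theoPutinar} is Putinar's Positivstellensatz, imported with a citation and used as a black box (it feeds Lemma \ref{interior} and Theorem \ref{theo-Nie}). So your proposal is competing with the literature, not with an in-paper argument. Moreover, the route you sketch is not Putinar's original one: Putinar's 1993 proof is operator-theoretic (a functional $L\ge 0$ on the quadratic module $\mathcal M$ with $L(p)\le 0$ yields, via the GNS construction, commuting self-adjoint operators that are \emph{bounded} thanks to the Archimedean condition (\ref{eqArchimedean}); the spectral theorem then produces a representing measure on $K$, contradicting $p>0$ on $K$). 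What you outline is instead the order-theoretic/convexity proof in the Kadison--Dubois/Jacobi tradition (as in Marshall's book): bounded elements form a subring, states form a compact convex set, Krein--Milman, extreme states are characters. That is a legitimate and more elementary alternative, and your first two steps are carried out correctly: the identity exhibiting $\frac{N+t^2}{2t}\mp x_i\in\mathcal M$, the squaring identity $2C(C^2-f^2)=(C-f)(C+f)^2+(C+f)(C-f)^2$, and the observation that $1$ being an order unit (hence an algebraic interior point of $\mathcal M$) is what legitimizes Eidelheit separation even though $\mathcal M$ need not be closed --- all correct.

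The genuine gap is in the step you yourself call the heart: that extreme states are multiplicative. Your justification --- Cauchy--Schwarz for $(f,g)\mapsto \bar L(fg)$ plus the Archimedean bound --- does not suffice, and the obstruction is precisely that $\mathcal M$ is a quadratic module, not a preordering: it is closed under multiplication by \emph{squares} $q^2\mathcal M\subseteq \mathcal M$, but not under multiplication by its own elements. Consequently, for $a$ with $0\le a\le 1$ in the $\mathcal M$-order there is no immediate reason why the functional $f\mapsto \bar L(af)$ should be nonnegative on $\mathcal M$, which is exactly what you need for it to be a nonnegative multiple of a state; for $a,f\in\mathcal M$ the product $af$ need not lie in $\mathcal M$, and Cauchy--Schwarz only bounds $|\bar L(af)|$, it produces no sign. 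The standard repair (this is where the real work sits in Jacobi's theorem and in Burgdorf--Scheiderer--Schweighofer's treatment of pure states) is an approximate-square-root argument: for $0\le a\le 1$ one builds ring elements $b_n$, e.g.\ from truncations of the binomial series of $\sqrt{1-s}$ with $s=1-a$ (all powers $s^k$ lie in $\mathcal M$ since $s^k$ is either a square or $s$ times a square), such that $b_n^2\to a$ in the order-unit seminorm; then $f\mapsto \bar L(b_n^2f)$ is nonnegative on $\mathcal M$ for each $n$, and the Archimedean bounds let you pass to the limit, giving the positivity you asserted. (Alternatively one can get the same conclusion by taking square roots of \emph{operators} after a GNS construction --- but that is essentially Putinar's spectral-theoretic proof, i.e.\ the machinery you were trying to avoid.) With that lemma inserted, your decomposition of $\bar L$, the extremality argument, the extension of multiplicativity to all of $\mathbb{R}[x]$, and the final contradiction $\mu=p(a)>0$ with $a\in K$ all go through. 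So the architecture is sound, but as written the pivotal positivity claim is unsupported, and it is the one point where the Archimedean hypothesis has to be used in a way your sketch does not capture.
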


The Lasserre hierarchy is said to have  {\em finite convergence} if $f^{(r)}=f_{\min} $ for some $r\in \mathbb{N}$.  In general, finite convergence is not always achieved. However,  Nie  showed  in \cite{Nie} a very useful result that permits to show finite convergence of the Lasserre hierarchy under some extra conditions apart from the Archimedean condition. These conditions rely on the classical optimality conditions, that we now recall (see, e.g., the textbook \cite{Bertsekas1999}).

\medskip
Let $u$ be a local minimizer of problem (\ref{poly-opt}) and let $J(u)=\{j\in [m]: g_j(u)=0\}$ be the set of inequality constraints that are active at $u$. Then the  \textit{constraint qualification constraint} (abbreviated as CQC) holds at $u$ if the set $\{\nabla g_j(u) : j\in J(u)\} \cup \{\nabla h_i(u): i\in [k]\}$ is linearly independent.  if CQC holds at $u$ then there exist scalars $\lambda_1, \dots, \lambda_k, \mu_1, \dots, \mu_m\in\oR$  satisfying
\begin{align*}
  \nabla f(u)=\sum_{i=1}^{k}\lambda_i\nabla h_i(u)+\sum_{j\in J(u)}\mu_j\nabla g_j(u), \quad \mu_j\ge 0 \text{ for } j\in J(u),\quad \mu_j=0\text{ for } j\in [m]\setminus J(u).
\end{align*}
If, in addition,  $\mu_j>0$ holds for all $j\in J(u)$, then  one says that the \textit{strict complementarity condition} (abbreviated as SCC) holds. Let $L(x)$ the Lagrangian function, 
defined by
$$L(x)= f(x)-\sum_{i=1}^{k}\lambda_ih_i(x)-\sum_{j\in J(u)}\mu_jg_j(x).$$
Another necessary condition for $u$ to be a local minimizer is the following inequality 
\begin{align}\label{eqSONC}\tag{SONC}
    v^T\nabla^2L(u)v\geq 0 \text{ for all } v\in G(u)^{\perp},
\end{align}
where $G(u)$ is defined by
$$G(u)^{\perp}=\{x\in \oR^n: x^T\nabla g_j(u)=0 \text{ for all  } j\in J(u) \text{ and }  x^T\nabla h_i(u)=0 \text{ for all } i\in [k]\}.$$
 If it happens that the inequality (\ref{eqSONC}) is strict, i.e., if
\begin{align}\label{eqSOSC}\tag{SOSC}
    v^T\nabla^2L(u)v> 0 \text{ for all } 0\neq v\in G(u)^{\perp},
\end{align}
then one says that the \textit{second order sufficiency condition}  (SOSC) holds at $u$.

\medskip
We can now state the following result by Nie \cite{Nie}.

\begin{theorem}[\cite{Nie}]\label{theo-Nie}
Assume the Archimedean condition (\ref{eqArchimedean}) holds for the polynomial tuples $h$ and $g$ in problem (\ref{poly-opt}). If the constraint qualification condition (CQC), the strict complementarity condition (SCC), and the second order sufficiency condition (SOSC)  hold at every global minimizer of (\ref{poly-opt}), then the Lasserre hierarchy (\ref{las-poly}) has finite convergence, i.e., $f^{(r)}=f_{\min}$ for some $r\in \oN$.
\end{theorem}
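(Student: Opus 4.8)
The plan is to produce, at some finite order $r$, an \emph{exact} certificate
\[
f - f_{\min} = \sigma_0 + \sum_{j=1}^m \sigma_j g_j + \sum_{i=1}^k u_i h_i,
\qquad \sigma_0,\sigma_j \in \Sigma,\ u_i \in \oR[x],
\]
since the existence of any such representation (necessarily of some finite degree) makes $\lambda = f_{\min}$ feasible in (\ref{las-poly}) for all large $r$, and weak duality gives $f^{(r)} \le f_{\min}$, whence $f^{(r)} = f_{\min}$. In other words, the whole content is to show that $f - f_{\min}$ lies in the quadratic module $\mathcal{M} = \Sigma + \sum_j \Sigma\, g_j + \sum_i \oR[x]\, h_i$. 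The subtlety is that Theorem~\ref{theoPutinar} already delivers this membership for $f - f_{\min} + \varepsilon$ for every $\varepsilon > 0$ (that polynomial being strictly positive on $K$), i.e.\ $f - f_{\min}$ lies in the \emph{closure} of $\mathcal{M}$, but the degree of such representations blows up as $\varepsilon \to 0$. So the real task is to upgrade closure-membership to genuine membership; the obstruction is entirely concentrated on the zero set of $f - f_{\min}$ inside $K$, namely the global minimizers, and the three optimality hypotheses are exactly what is needed to control the local geometry of $f - f_{\min}$ there.

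\textbf{Step 1 (structure of the minimizer set).} First I would show that CQC together with SOSC forces each global minimizer $u$ to be isolated: CQC makes the active constraint gradients linearly independent, so $G(u)^{\perp}$ has the expected dimension, and SOSC makes $\nabla^2 L(u)$ strictly positive definite on $G(u)^{\perp}$; a standard implicit-function/Morse argument then exhibits $u$ as a locally unique critical point. Since the Archimedean condition (\ref{eqArchimedean}) forces $K$ to be compact, the set of global minimizers is finite, say $u_1,\dots,u_N$.

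\textbf{Step 2 (local certificates near each minimizer).} At each $u_\ell$ I would construct, on a neighborhood, a representation of $f - f_{\min}$ inside $\mathcal{M}$. Here SCC and SOSC play complementary roles: after passing to coordinates adapted to the active constraints, SCC guarantees that the multipliers $\mu_j$ of the active inequalities are strictly positive, so those $g_j$-terms genuinely raise the function in the normal directions, while SOSC supplies a strictly positive definite quadratic model in the tangent directions $G(u_\ell)^{\perp}$. Together these are precisely the boundary Hessian conditions, under which $f - f_{\min}$ admits a local sum-of-squares-module representation of Marshall/Scheiderer type.

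\textbf{Step 3 (globalization), the hard part.} The central difficulty, which I expect to be the main obstacle, is to glue the $N$ local representations of Step~2 into a single global finite-degree certificate; naive patching destroys the sum-of-squares structure, and this is exactly the place where Putinar alone yields only the $\varepsilon$-relaxed certificate. Following Nie, I would introduce Lagrange-multiplier variables to form the KKT system $\nabla f = \sum_i \lambda_i \nabla h_i + \sum_j \mu_j \nabla g_j$, $h_i = 0$, $\mu_j g_j = 0$, whose real solutions contain all minimizers; CQC lets one express the multipliers locally as polynomial functions of $x$ and work on the resulting critical-point variety. On that variety $f - f_{\min}$ vanishes only at the finitely many \emph{nondegenerate} minimizers of Steps~1--2 and has a strictly positive second-order model at each, which is what promotes the asymptotic representation to an exact one of finite degree. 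Combining this with the strict positivity of $f - f_{\min}$ on the part of $K$ bounded away from the minimizers (handled by Theorem~\ref{theoPutinar}) yields membership of $f - f_{\min}$ in $\mathcal{M}$, and hence $f^{(r)} = f_{\min}$ for some finite $r$.
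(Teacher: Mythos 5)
The paper you are being compared against does not prove this statement at all---it is imported as a black box from Nie's paper---so your attempt has to be measured against Nie's own argument. Your reduction of finite convergence to the single membership $f-f_{\min}\in\mathcal{M}=\Sigma+\sum_{j}\Sigma\, g_j+\sum_i \oR[x]\,h_i$ is correct (weak duality gives $f^{(r)}\le f_{\min}$, and any finite-degree certificate makes $\lambda=f_{\min}$ feasible in (\ref{las-poly}) for all large $r$), and your Steps 1--2 are on the actual route: under (CQC) one can choose local coordinates in which the active constraints become coordinate functions, and then (SCC) (strictly positive multipliers, i.e.\ positive linear part in the normal variables) together with (SOSC) (positive definite Hessian of the Lagrangian on $G(u)^{\perp}$) amount exactly to Marshall's boundary Hessian condition (BHC) at each global minimizer. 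Verifying this implication is the heart of Nie's proof.

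The genuine gap is Step 3, and it is a conceptual one, not a technical one. What you call ``the hard part''---gluing the local certificates---is not done by hand in the real proof, and cannot be done the way you propose. Marshall's BHC theorem (which rests on Scheiderer's local-global principle for Archimedean quadratic modules) is already a \emph{global} statement: if $f-f_{\min}\ge 0$ on $K$, the module is Archimedean, and the BHC holds at every zero of $f-f_{\min}$ in $K$, then $f-f_{\min}\in\mathcal{M}$, full stop. One cites it and is done; its proof works in completions of the local rings at the zeros and is deep, but it is a known theorem, and the local-to-global passage you are trying to reconstruct is precisely its content. Your substitute mechanism---adjoining Lagrange-multiplier variables, forming the KKT system, and certifying positivity on the resulting critical-point variety---would, even if carried out, yield a representation of $f-f_{\min}$ in the quadratic module generated by the $g_j$, $h_i$ \emph{and the KKT equations}. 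That certifies finite convergence of a different, augmented hierarchy (this is what Nie does in his separate results on relaxations with optimality conditions added as constraints), and it gives no bound whatsoever on $f^{(r)}$ as defined in (\ref{las-poly}), whose certificates may use only the original $g_j$ and $h_i$. Two further claims in your Step 3 are unsupported: (CQC) does not let you express the multipliers as polynomial functions of $x$ (locally they are smooth rational functions; making them polynomial requires determinantal tricks that again change the constraint set), and the final clause ``which is what promotes the asymptotic representation to an exact one of finite degree'' asserts exactly the thing that needs proof. The repair is to keep Steps 1--2, and replace Step 3 by an invocation of Marshall's boundary Hessian theorem (note: this is \emph{not} the reference [Marshall] in this paper's bibliography, which is a different result used only for closedness of the feasible set of (\ref{las-poly})).
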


In the next section we will apply Theorem \ref{theo-Nie} to a class of standard quadratic programs in order to show finite convergence of the corresponding Lasserre hierarchy.
One important observation, already made in \cite{Nie}, is that this   strategy 
can only work when the number of global minimizers is finite. 

\subsection{Optimality conditions for standard quadratic programs}

Consider a matrix $M\in \bCOP_n$. The objective of this section is to give sufficient conditions on $M$ that permit to conclude that  $DMD\in \bigcup_{r\geq0}\LAS_{\Delta_n}^{(r)}$ for all $D\in \MD_{++}^{n}$. This will be very useful since,  in the next section, we will show that the matrices $T(\psi)$ ($\psi\in \Psi$) satisfy these sufficient conditions and thus we will be able to conclude the proof of Theorem \ref{theomain2}.
Our strategy is to apply the result from Theorem \ref{theo-Nie} to the setting of standard quadratic programs. Let us recall the following problem, already introduced in Section \ref{secsketch}:
\begin{align}\label{sqp}\tag{SQP$_{M}$}
\min\{x^TMx: x\in \Delta_n\}
\end{align}
and the corresponding Lasserre hierarchy introduced in relation (\ref{las-sqp}).
Note the optimal value of (\ref{sqp}) is zero as  $M\in \bCOP_n$. 

Now we will apply Theorem \ref{theo-Nie} to problem (\ref{sqp}).  The set $\MK=\Delta_n$ indeed satisfies the Archimedean condition (this is well-known and easy to check; see, e.g.,  \cite{LV2021a}). By \cite[Theorem~3.1]{Marshall}, the feasible region of the Lasserre hierarchy (\ref{las-sqp}) associated to problem (\ref{sqp}) is a closed set. Hence, the `$\sup$' in program (\ref{las-sqp}) can be changed to a '$\max$'. As a consequence, for a matrix  $M\in \bCOP_n$, having finite convergence of the Lasserre hierachy (\ref{las-sqp}) associated to problem (\ref{sqp})  is equivalent to having  $M\in \bigcup_{r\geq 0}\LAS_{\Delta_n}^{(r)}$. So we obtain the following corollary. 

\begin{coro}\label{finite-LAS}
Let $M\in \bCOP_n$. If the optimality conditions (CQC), (SCC) and (SOSC) hold at every global minimizer of problem (\ref{sqp}) then $M\in \bigcup_{r\geq 0} \LAS_{\Delta_n}^{(r)}$.
\end{coro}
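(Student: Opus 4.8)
The plan is to apply Theorem \ref{theo-Nie} directly to the standard quadratic program (\ref{sqp}) and then translate its conclusion into membership in the cones $\LAS^{(r)}_{\Delta_n}$. First I would fix $M\in \bCOP_n$, so that $p_M^*=0$ is the optimal value of (\ref{sqp}), and write out problem (\ref{sqp}) as an instance of (\ref{poly-opt}) with objective $f(x)=x^TMx$, inequality constraints $g_i(x)=x_i\ge 0$ for $i\in [n]$, and the single equality constraint $h(x)=\sum_{i=1}^n x_i-1=0$. As already noted in the text, the simplex $\Delta_n$ satisfies the Archimedean condition, so the hypotheses of Theorem \ref{theo-Nie} concerning $K$ are met; what remains is precisely the assumption that (CQC), (SCC), and (SOSC) hold at every global minimizer. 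By hypothesis these three conditions hold, so Theorem \ref{theo-Nie} applies and yields finite convergence of the Lasserre hierarchy attached to (\ref{sqp}).

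The next step is to match the generic Lasserre hierarchy (\ref{las-poly}) for this instance with the hierarchy (\ref{las-sqp}) used to define the cones $\LAS^{(r)}_{\Delta_n}$. With the constraints $g_i(x)=x_i$ (degree $1$) and $h(x)=\sum_i x_i-1$, the general scheme (\ref{las-poly}) specializes to a representation $f-\lambda=\sigma_0+\sum_{i=1}^n \sigma_i x_i + u\,h$ with $\sigma_0\in\Sigma_r$, $\sigma_i\in\Sigma_{r-1}$, and $u\in\oR[x]$; absorbing $u\,h$ into the term $q\in I_\Delta$ shows this is exactly the program (\ref{las-sqp}) defining $p_M^{(r)}$. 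Hence finite convergence from Theorem \ref{theo-Nie} means $p_M^{(r)}=p_M^*=0$ for some $r\in\oN$.

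Finally I would invoke the observation already established in the surrounding discussion: by \cite[Theorem~3.1]{Marshall} the feasible set of (\ref{las-sqp}) is closed, so the supremum is attained and may be replaced by a maximum. Consequently $p_M^{(r)}=0$ is equivalent to the matrix $M$ admitting a decomposition certifying membership in $\LAS^{(r)}_{\Delta_n}$, that is, $p_M^{(r)}=0$ holds if and only if $M\in\LAS^{(r)}_{\Delta_n}$. Combining this equivalence with the finite convergence $p_M^{(r)}=0$ obtained above yields $M\in\LAS^{(r)}_{\Delta_n}\subseteq\bigcup_{r\ge 0}\LAS^{(r)}_{\Delta_n}$, which is the desired conclusion.

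I do not expect a serious obstacle here, since the corollary is essentially a packaging of Theorem \ref{theo-Nie} for the simplex; the only point requiring minor care is the bookkeeping between the two hierarchies (\ref{las-poly}) and (\ref{las-sqp}) and the use of closedness of the feasible region to pass from ``$p_M^{(r)}\to 0$'' to the sharper ``$p_M^{(r)}=0$ for some $r$.'' The genuinely hard work is deferred to the next section, where one must verify that the matrices $T(\psi)$ (and their positive diagonal scalings) actually satisfy (CQC), (SCC), and (SOSC) at all global minimizers of (\ref{sqp}); that verification, not the present corollary, is the real content.
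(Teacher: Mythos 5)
Your proposal is correct and follows essentially the same route as the paper: apply Nie's theorem (Theorem \ref{theo-Nie}) to (\ref{sqp}) using the Archimedean property of $\Delta_n$ together with the assumed optimality conditions to get finite convergence, then use Marshall's closedness result \cite[Theorem~3.1]{Marshall} to replace the supremum in (\ref{las-sqp}) by a maximum, so that $p_M^{(r)}=0$ yields an actual certificate of membership in $\LAS^{(r)}_{\Delta_n}$. The only cosmetic difference is that you spell out the identification of (\ref{las-poly}) with (\ref{las-sqp}), which the paper leaves implicit.
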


As mentioned earlier, our objective is to give sufficient conditions on $M$ that permit to claim  $DMD\in \bigcup_{r\geq 0} \LAS_{\Delta_n}^{(r)}$ for all $D\in \MD_{++}^n$. For this we will apply  Corollary \ref{finite-LAS}. A key fact we will show is that, if the optimality conditions hold at every minimizer of problem (\ref{sqp}) for $M$, then the same holds for $DMD$ for any $D\in \MD^n_{++}$.
Given $D\in \MD^n_{++}$, let us consider the standard quadratic program associated to $DMD$:
\begin{align}\label{Dsqp}\tag{SQP$_{DMD}$}
\min\{x^TDMDx: x\in \Delta_n\}.
\end{align}
Observe that the optimal value of program (\ref{Dsqp}) is zero. Indeed, if $u\in \Delta_n$ is a minimizer of problem (\ref{sqp}), then $\frac{D^{-1}u}{\|{D^{-1}u\|_1}}\in \Delta_n$ is a minimizer of problem (\ref{Dsqp}). Conversely, if $v\in \Delta_n$ is a minimizer of (\ref{Dsqp}), then ${Dv\over \|Dv\|_1}$ is a minimizer of (\ref{sqp}). Hence,  the minimizers of both problems are in one-to-one correspondence, and thus  problem (\ref{sqp}) has finitely many minimizers if and only if problem (\ref{Dsqp}) has finitely many minimizers. 

\medskip
Now we analyze the optimality conditions (CQC), (SCC) and (SOSC) for problems (\ref{sqp}) and (\ref{Dsqp}). Observe that the constraint qualification condition (CQC) is satisfied at every minimizer. Indeed, if $u\in \Delta_n$, then $J(u)=\{i\in [n]: x_i=0\}=[n]\setminus \supp(u)$, and   the vectors $e$, $e_i$ (for $i\in J(u)$) are linearly independent. 

 Let us recall a result from \cite{Diananda} about the support of optimal solutions for problem (\ref{sqp}), which we will  use for the analysis of the conditions (SCC) and (SOSC). We give the short proof for clarity.

\begin{lemma} \cite[Lemma 7 (i)]{Diananda} \label{support-psd}
Let $M\in \COP_n$   and let $x\in \mathbb{R}_+^n$ be such that $x^TMx=0$. Let $S=\supp(x)$ be the support of $x$. Then $M[S]$, the principal submatrix of $M$ indexed by $S$, is positive semidefinite.  
\end{lemma}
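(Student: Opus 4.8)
The plan is to prove this directly from the definition of copositivity, exploiting the fact that a vanishing point $x$ with $x^TMx=0$ is a global minimizer of the quadratic form $y^TMy$ over the nonnegative orthant (where the minimum value is $0$, by copositivity). First I would reduce to the submatrix: since $x^TMx = \sum_{i,j\in S} x_i M_{ij} x_j$ depends only on the entries of $M$ indexed by $S=\supp(x)$ and on the positive entries $x_i$ ($i\in S$), it suffices to show that $M[S]$ is positive semidefinite, i.e.\ that $y^TM[S]y\ge 0$ for all $y\in\oR^{|S|}$ (note: {\em all} real $y$, not just nonnegative ones).

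The key idea is a standard perturbation argument. Fix any $y\in\oR^S$ supported on $S$, and consider the perturbed vector $x+ty$ for small real $t$. Because every coordinate $x_i$ with $i\in S$ is strictly positive, for $|t|$ small enough the vector $x+ty$ still lies in $\oR^n_+$ (its entries outside $S$ remain $0$, and its entries on $S$ stay positive by continuity). Copositivity then gives $(x+ty)^TM(x+ty)\ge 0$ for all such $t$. Expanding, I obtain
\begin{align*}
(x+ty)^TM(x+ty) = x^TMx + 2t\, y^TMx + t^2\, y^TMy = 2t\, y^TMx + t^2\, y^TMy \ge 0,
\end{align*}
using $x^TMx=0$. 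This inequality holds for all $t$ in an open interval around $0$.

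The decisive step is to extract the conclusion from this one-variable inequality. Viewing the right-hand side as a quadratic in $t$ that is nonnegative on a neighborhood of $t=0$ and vanishes at $t=0$, the point $t=0$ is a local minimizer, so the first-order coefficient must vanish: $y^TMx=0$. (Equivalently, $x$ being an interior-of-the-face minimizer forces the gradient condition $Mx=0$ on the coordinates in $S$; I could also note this directly since $x$ minimizes over the relative interior of the face $\{z\in\oR^n_+:\supp(z)\subseteq S\}$.) Once the linear term drops out, the surviving inequality reads $t^2\, y^TMy\ge 0$ for small $t\ne 0$, which forces $y^TMy\ge 0$. As $y$ ranges over all vectors supported on $S$, this is exactly the statement that $M[S]\succeq 0$.

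I expect the main (and only real) obstacle to be a careful handling of the sign of $t$: since $y$ can be replaced by $-y$, the linear term $y^TMx$ must be nonnegative for both signs, which is what pins it to zero; this is routine but is the crux, so I would state it cleanly. No deep machinery is needed here — the result is elementary and self-contained, relying only on definition (\ref{cop}) of $\COP_n$ and the openness of $\oR^n_+$ near a strictly positive (on its support) point.
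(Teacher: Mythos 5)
Your proof is correct and follows essentially the same route as the paper's: both perturb the zero $x$ (which is strictly positive on its support $S$) in an arbitrary direction $y$ supported on $S$ and apply copositivity to the combination, your small-$t$ perturbation $x+ty$ being equivalent, by homogeneity of the quadratic form, to the paper's large-$\lambda$ combination $\lambda\tilde{x}+y$. The only difference is logical packaging: the paper argues by contradiction with a WLOG sign choice on $y^TM[S]\tilde{x}$, whereas you first extract the first-order condition $y^TMx=0$ and then read off $y^TMy\ge 0$ directly, which incidentally also yields the fact $(Mx)_i=0$ for $i\in S$ that the paper only establishes later (in Lemma \ref{scc-diag}).
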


\begin{proof}
Let $\tilde{x}=x|_S$ be the restriction of $x$ to the coordinates indexed by $S$, so $\tilde{x}^TM[S]\tilde{x}=0$.  Assume by contradiction that $M[S]$ is not positive semidefinite. Then there exists $y\in \mathbb{R}^{S}$ such that $y^TM[S]y<0$ and we can  assume that {$y^TM[S]\tilde{x} \le 0$} (else replace $y$ by $-y$).  Since all entries of $\tilde{x}$ are positive, there exists $\lambda\geq 0$ such that the vector $\lambda \tilde{x}+y$ has all its entries positive. Thus,  $(\lambda\tilde{x}+y)^TM[S](\lambda\tilde{x}+y)= \lambda^2\tilde{x}^TM[S]\tilde{x}+ 2\lambda\tilde{x}^TM[S]y+ y^TM[S]y <0$, contradicting that $M[S]$ is copositive. 
\end{proof}

We now characterize the minimizers for which the strict complementarity condition (SCC) holds. Moreover, we show that, if a minimizer $u$ of problem (\ref{sqp}) satisfies  (SCC), then the corresponding minimizer $\frac{D^{-1}u}{\norm{D^{-1}u}}$ of problem (\ref{Dsqp}) also satisfies (SCC).

\begin{lemma}\label{scc-diag}
Let  $M\in \bCOP_n$, $D\in \MD_{++}^{n}$, and let $u$ be a minimizer of problem (\ref{sqp}). The  strict complementarity condition (SCC) holds at $u$ if and only if  $\supp(Mu)=[n]\setminus \supp(u)$ or, equivalently,  $(Mu)_i>0$ 
 for all $i\in [n]\setminus \supp(u)$.

As a consequence, (SCC)  holds at $u$ (for problem (\ref{sqp})) if and only if (SCC) holds at $\frac{D^{-1}u}{\norm{D^{-1}u}}$ (for problem (\ref{Dsqp}). \end{lemma}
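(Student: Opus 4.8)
The plan is to work out explicitly the Lagrangian stationarity condition for problem (\ref{sqp}) and read off what the multipliers $\mu_j$ must be, then show that (SCC) is equivalent to a positivity condition on the entries of $Mu$ off the support of $u$. First I would set up the KKT system. Since the only equality constraint is $h(x)=\sum_i x_i - 1$ (with $\nabla h = e$) and the active inequality constraints at a minimizer $u$ are $g_j(x)=x_j\ge 0$ for $j\in J(u)=[n]\setminus\supp(u)$ (with $\nabla g_j = e_j$), the stationarity condition $\nabla f(u) = \lambda \nabla h(u) + \sum_{j\in J(u)} \mu_j \nabla g_j(u)$ reads
\begin{align*}
2Mu = \lambda e + \sum_{j\in J(u)} \mu_j e_j.
\end{align*}
Because (CQC) holds at every minimizer (as already noted, $e$ together with the $e_j$, $j\in J(u)$, are linearly independent), the multipliers $\lambda, \mu_j$ are uniquely determined. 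Reading off coordinates: for $i\in\supp(u)$ we get $2(Mu)_i = \lambda$, and for $j\in J(u)$ we get $2(Mu)_j = \lambda + \mu_j$, hence $\mu_j = 2(Mu)_j - \lambda$.

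Next I would pin down $\lambda$. Since $u^TMu = p_M^* = 0$ and $\sum_i u_i = 1$, contracting the stationarity equation with $u$ gives $2u^TMu = \lambda\, u^Te + \sum_{j\in J(u)}\mu_j\, u_j$; the left side is $0$, the term $u^Te=1$, and $u_j=0$ for $j\in J(u)$, so $\lambda = 0$. Therefore $\mu_j = 2(Mu)_j$ for each $j\in J(u)$, and (SCC), which by definition requires $\mu_j>0$ for all $j\in J(u)$, is exactly the statement that $(Mu)_j>0$ for all $j\in[n]\setminus\supp(u)$, equivalently $\supp(Mu)=[n]\setminus\supp(u)$ (note $\lambda=0$ also gives $(Mu)_i=0$ for $i\in\supp(u)$, so the support of $Mu$ is contained in $J(u)$ automatically). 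This proves the first equivalence.

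For the consequence, recall that the minimizers of (\ref{sqp}) and (\ref{Dsqp}) are in one-to-one correspondence via $u \mapsto v := D^{-1}u/\norm{D^{-1}u}$. I would apply the equivalence just proved to $DMD$ at the minimizer $v$, reducing the claim to showing $\supp((DMD)v)=[n]\setminus\supp(v)$. The key observation is that conjugation by a positive diagonal matrix preserves supports: $\supp(v)=\supp(D^{-1}u)=\supp(u)$ since $D$ has nonzero diagonal, and likewise $(DMDv)_i = D_{ii}(MDv)_i$, where $Dv$ is a positive scalar multiple of $u$, so $(DMDv)_i = c\, D_{ii}(Mu)_i$ for a positive constant $c$. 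Since $D_{ii}>0$, we have $(DMDv)_i>0$ if and only if $(Mu)_i>0$ and $(DMDv)_i=0$ if and only if $(Mu)_i=0$; thus $\supp((DMD)v)=\supp(Mu)$, and combined with $\supp(v)=\supp(u)$ the condition transfers verbatim. The only mild care needed is the normalization constant, but since it is positive it does not affect any support, so this step is essentially bookkeeping. I expect the main conceptual point — and the only place where something could go wrong — to be the determination that $\lambda=0$, which hinges on using both $u^TMu=0$ (from $M\in\bCOP_n$) and the complementarity $u_j=0$ on $J(u)$; everything else is linear algebra and the diagonal-invariance of supports.
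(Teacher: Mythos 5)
Your proof is correct, and its overall architecture matches the paper's: write out the KKT stationarity system $2Mu=\lambda e+\sum_{j\in J(u)}\mu_je_j$, show $\lambda=0$ so that the (unique, by CQC) multipliers are $\mu_j=2(Mu)_j$, conclude that (SCC) is equivalent to $(Mu)_j>0$ for $j\in[n]\setminus\supp(u)$, and transfer the condition to $DMD$ via the correspondence $u\mapsto D^{-1}u/\|D^{-1}u\|_1$ together with the fact that positive diagonal scaling preserves supports. The one genuine difference is how you pin down $\lambda=0$ and the companion fact $(Mu)_i=0$ for $i\in\supp(u)$. The paper first proves $(Mu)_i=0$ on $\supp(u)$ by invoking Lemma \ref{support-psd}: since $u^TMu=0$, the principal submatrix $M[S]$ (with $S=\supp(u)$) is positive semidefinite and $u|_S$ lies in its kernel; only then does it read off $\lambda=(Mu)_i=0$ from the coordinates $i\in S$ of the stationarity equation. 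You instead contract the stationarity equation with $u$ itself, using $u^TMu=0$, $u^Te=1$ and $u_j=0$ on $J(u)$, to get $\lambda=0$ directly, and you recover $(Mu)_i=0$ on the support as a byproduct. Your route is more elementary at this point---it is pure complementary slackness and does not need the Diananda-type Lemma \ref{support-psd} at all (though the paper needs that lemma elsewhere, e.g., in Lemma \ref{finite-sosc}, so nothing is saved globally). Both arguments are complete; yours has the mild advantage of keeping the equivalence self-contained within the KKT system.
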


\begin{proof}
Let $S=\supp(u)$. We first prove that $(Mu)_i=0$ for any $i\in S$. 
 Let $\tilde{u}=u|_{S}$ denote the restriction of vector $u$ to the coordinates indexed by $S$. Then, we have  $0=u^TMu=\tilde{u}^TM[S]\tilde{u}$. By Lemma \ref{support-psd}, $M[S]$ is positive semidefinite, and thus  $\tilde{u}\in\text{Ker}(M[S])$. Thus,  $0=(M[S]\tilde{u})_i=(Mu)_i$ for any $i\in S$. This shows 
 $\supp(Mu)\subseteq [n]\setminus S$.
 Hence equality $\supp(Mu)= [n]\setminus S$ holds if and only if   $(Mu)_i=\sum_{j\in \supp(u)} M_{ij}u_j>0$ for all $i\in [n]\setminus \supp(u)$. It suffices now to  show the link to (SCC).
 
 In problem (\ref{sqp}) the strict complementarity condition (SCC) reads:
$$Mu = \lambda e + \sum_{j\in[n]\setminus S}\mu_je_j \quad \text{ with } \mu_j>0 \text{ for } j\in [n]\setminus S.$$
By looking at the coordinate indexed by $i\in S$ we obtain that $0=(Mu)_i=\lambda$. Hence, $(Mu)_j=\mu_j$ for any $j\in [n]\setminus S$.  Therefore (SCC) holds if and only if  $(Mu)_j>0$ for all $j\in [n]\setminus S$.

The last claim of the lemma follows  using the above characterization, combined with  the correspondence between the minimizers $u$ of (\ref{sqp}) and $D^{-1}u$ (up to scaling) of (\ref{Dsqp}) and the fact that $\supp(Mu)=\supp(DMu)$ and 
$\supp(D^{-1}u)=\supp(u)$ (as $D$ is positive diagonal).
\end{proof}

As observed, e.g., in \cite{Nie}, if the sufficient optimality conditions (CQC), (SCC), (SOSC) hold at every global minimizer, then the number of minimizers must be finite. We now show a useful fact:  if a standard quadratic program has  
 finitely many minimizers, then (SOSC) holds at all of them. 
 
\begin{lemma}\label{finite-sosc}
Let $M\in \bCOP_n$, so that problem (\ref{sqp}) has optimal value zero. If (\ref{sqp})   has finitely many minimizers, then (SOSC) holds  at every global minimizer.
\end{lemma}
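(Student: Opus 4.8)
The plan is to argue by contraposition: I will show that if (SOSC) fails at some global minimizer, then problem (SQP$_M$) admits infinitely many minimizers. First I would unwind what (SOSC) means here. Since the objective $x^TMx$ has Hessian $2M$ while the constraints $h(x)=\sum_i x_i-1$ and $g_i(x)=x_i$ are all affine, the Lagrangian satisfies $\nabla^2 L(u)=2M$ at any minimizer $u$, regardless of the multipliers. Writing $S=\supp(u)$, so that $J(u)=[n]\setminus S$, and using $\nabla g_j(u)=e_j$ and $\nabla h(u)=e$, the subspace $G(u)^\perp$ consists exactly of the vectors $v$ with $v_j=0$ for $j\notin S$ and $\sum_{i\in S}v_i=0$. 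Hence (SOSC) at $u$ is precisely the statement that $v^TMv>0$ for every nonzero such $v$.

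Next I would record the two facts that drive the argument. By Lemma \ref{support-psd}, the principal submatrix $M[S]$ is positive semidefinite, so $v^TMv\ge 0$ already holds for all $v$ supported on $S$; thus the content of (SOSC) is only the strictness. Moreover, since $u^TMu=0$ and $M[S]$ is positive semidefinite, the restriction $\tilde u=u|_S$ lies in $\ker(M[S])$, which yields $u^TMv=\tilde u^TM[S]\tilde v=0$ for every $v$ supported on $S$ (writing $\tilde v=v|_S$).

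Now suppose (SOSC) fails at a minimizer $u$, i.e.\ there is a nonzero $v\in G(u)^\perp$ with $v^TMv=0$. I would then consider the line $u+tv$. Because $v$ is supported on $S$ with $\sum_{i\in S}v_i=0$, every point satisfies $\sum_i(u+tv)_i=1$, and for $|t|$ small enough all its entries remain nonnegative, since $u_i>0$ on $S$ while $(u+tv)_i=0$ off $S$; hence $u+tv\in\Delta_n$. Expanding the objective gives $(u+tv)^TM(u+tv)=u^TMu+2t\,u^TMv+t^2\,v^TMv=0$, using the three vanishing quantities recorded above. Thus every $u+tv$ with small $|t|$ is a global minimizer, and as $v\ne 0$ these are infinitely many distinct minimizers, contradicting the hypothesis. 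This establishes the contrapositive and hence the lemma.

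The step I expect to be the crux is the vanishing of the cross term $u^TMv$; once $M[S]$ is known to be positive semidefinite (via Lemma \ref{support-psd}), everything else is routine. Conceptually, the argument shows that any failure of strictness in (SOSC) produces a one-parameter family of zeros of $x^TMx$ inside $\Delta_n$, so finiteness of the minimizer set forces (SOSC) to hold at each of them.
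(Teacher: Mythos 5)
Your proof is correct and follows essentially the same route as the paper: both rest on Lemma \ref{support-psd}, the fact that $x^TAx=0$ with $A\succeq 0$ forces $Ax=0$, and the observation that a degenerate direction in the critical subspace produces a whole segment of zeros of $x^TMx$ in $\Delta_n$, contradicting finiteness. The only difference is organizational: the paper first proves there is at most one minimizer per support and then deduces (SOSC) from that uniqueness, whereas you merge the two steps into a single contrapositive by exhibiting the line $u+tv$ of minimizers directly.
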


\begin{proof}
Assume $M\in \bCOP_n$ and (\ref{sqp}) has finitely many minimizers.
We first prove that, given $S\subseteq [n]$,  problem (\ref{sqp}) has at most one optimal solution with support $S$. For this, assume by contradiction that $u\ne v\in \Delta_n$ are solutions of $x^TMx=0$ with support $S$. By Lemma \ref{support-psd} the matrix $M[S]$ is positive semidefinite. Let $\tilde{u}$ and $\tilde{v}$ be the restrictions of the vectors $u$ and $v$ to the entries indexed by $S$. Hence, $\tilde{u}^TM[S]\tilde{u}=\tilde{v}^TM[S]\tilde{v}=0$, and thus $M[S]\tilde{u}=M[S]\tilde{v}=0$. This implies that every convex combination of $\tilde{u}, \tilde{v}$ belongs to the kernel of $M[S]$, so that the form $x^TM[S]x$ has infinitely many zeroes on $\Delta_{|S|}$. Hence,  $x^TMx$ has infinitely many zeroes on $\Delta_{n}$, contradicting the assumption. 

Let $u$ be a minimizer of problem (\ref{sqp}) with support $S$ and consider as above  its restriction $\tilde u\in \oR^{|S|}$.
 Observe that the \textit{second order sufficiency condition} (SOSC) for problem (\ref{sqp}) at $u$ reads 
$$ v^TMv > 0  \text{ for all } v\in \mathbb{R}^n\setminus \{0\} \text{ such that } \sum_{i=1}^{n}v_i=0 \text{ and } v_j=0\quad \forall j\in [n]\setminus S,$$
$$\text{or,  equivalently, } \quad a^TM[S]a> 0  \text{ for all } a\in \mathbb{R}^{|S|} \setminus {\{0\}} \text{ such that } \sum_{i\in S}a_i=0.$$
Assume that $a^TM[S]a=0$, we show $a=0$. Since $M[S]\succeq 0$ we have that $M[S]a=0$, so that $M[S](\lambda \tilde{u} +a)=0$ for all $\lambda\in \mathbb{R}$. Pick $\lambda>0$ large enough so that all entries of $\lambda \tilde{u}+a$ are positive. Then $\lambda \tilde{u} + a$ should be a multiple of $\tilde{u}$ because $u$ is the only minimizer over the simplex with support $S$. Combining with  the fact that $e^Ta=0$ this implies $a=0$.
\end{proof}

As previously observed, the minimizers of problems (\ref{sqp}) and (\ref{Dsqp}) are in one-to-one correspondence. Thus, as a consequence of Lemma \ref{finite-sosc}, (SOSC) holds at every globlal minimizer of (\ref{sqp}) if and only if it holds at every global minimizer of problem (\ref{Dsqp}). Moreover, we have shown in Lemma \ref{scc-diag} that (SCC) holds for all minimizers of problem (\ref{Dsqp}) if and only if it holds for all minimizers of (\ref{sqp}). Therefore, using  Corollary \ref{finite-LAS}, we obtain the following result.

\begin{theorem}\label{opt-DMD}
Let $M\in \bCOP_n$ and assume problem (\ref{sqp}) has finitely many mininizers. Assume moreover that, for every minimizer $u$ of problem (\ref{sqp}), we have $(Mu)_i>0$ for all $i\in [n]\setminus \supp(u)$.
Then $DMD\in \bigcup_{r\geq0}\LAS_n^{(r)}$ for all $D\in \MD_{++}^n$.
\end{theorem}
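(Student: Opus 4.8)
The plan is to assemble Theorem \ref{opt-DMD} from the pieces already established, reducing everything to an application of Corollary \ref{finite-LAS}. The strategy is to verify that the three optimality conditions (CQC), (SCC), and (SOSC) hold at every global minimizer of problem (\ref{sqp}) for the matrix $M$, and then transfer this to $DMD$ via the one-to-one correspondence between minimizers of (\ref{sqp}) and (\ref{Dsqp}). First I would recall that (CQC) holds automatically at every minimizer $u\in\Delta_n$: as noted earlier, $J(u)=[n]\setminus\supp(u)$, and the active constraint gradients $\{e\}\cup\{e_i:i\in J(u)\}$ are linearly independent, so this condition requires no hypothesis and is immediate.

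Next I would dispatch (SCC). By Lemma \ref{scc-diag}, (SCC) holds at a minimizer $u$ if and only if $(Mu)_i>0$ for all $i\in[n]\setminus\supp(u)$, which is precisely the standing hypothesis of the theorem. Hence (SCC) holds at every minimizer of (\ref{sqp}). For (SOSC), I would invoke Lemma \ref{finite-sosc}: since problem (\ref{sqp}) is assumed to have finitely many minimizers, (SOSC) holds at every global minimizer. At this point all three conditions are verified for the matrix $M$ itself, so Corollary \ref{finite-LAS} already gives $M\in\bigcup_{r\ge 0}\LAS_{\Delta_n}^{(r)}$.

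The remaining task is to upgrade this from $M$ to an arbitrary positive diagonal scaling $DMD$, so I would fix $D\in\MD_{++}^n$ and check that the same hypotheses hold for $DMD$. The key observation, already recorded in the excerpt, is that $DMD\in\bCOP_n$ and that the minimizers of (\ref{Dsqp}) are exactly the normalized images $Dv/\|Dv\|_1$ of the minimizers $v$ of (\ref{sqp}); in particular (\ref{Dsqp}) also has finitely many minimizers. Lemma \ref{finite-sosc} applied to $DMD$ then gives (SOSC) at every minimizer of (\ref{Dsqp}), and the last statement of Lemma \ref{scc-diag} transfers (SCC) from each minimizer $u$ of (\ref{sqp}) to the corresponding minimizer $\tfrac{D^{-1}u}{\|D^{-1}u\|}$ of (\ref{Dsqp}). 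Since (CQC) again holds automatically, a second application of Corollary \ref{finite-LAS}, now to $DMD$, yields $DMD\in\bigcup_{r\ge 0}\LAS_{\Delta_n}^{(r)}$, as desired.

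I do not expect any serious obstacle here: the theorem is essentially a bookkeeping assembly of Lemmas \ref{scc-diag} and \ref{finite-sosc} together with the automatic verification of (CQC) and the minimizer correspondence. The only point demanding a little care is making sure the hypotheses genuinely transfer to $DMD$ rather than merely holding for $M$ — that is, confirming that the sign condition $(Mu)_i>0$ on $M$ implies the analogous condition for $DMD$ at the scaled minimizer. This is handled by the identities $\supp(Mu)=\supp(DMu)$ and $\supp(D^{-1}u)=\supp(u)$ noted in Lemma \ref{scc-diag}, so the transfer is clean; the heavy lifting was already done in the two preparatory lemmas and in Nie's theorem.
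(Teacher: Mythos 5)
Your proposal is correct and follows essentially the same route as the paper: (CQC) is automatic, (SCC) at the minimizers of (\ref{Dsqp}) comes from the hypothesis via the last claim of Lemma \ref{scc-diag}, (SOSC) comes from Lemma \ref{finite-sosc} applied to $DMD$ using the one-to-one correspondence of minimizers, and Corollary \ref{finite-LAS} then concludes. The intermediate application of Corollary \ref{finite-LAS} to $M$ itself is harmless but redundant, since the paper's argument (and yours) handles all $D\in\MD_{++}^n$ at once, with $M$ being the case $D=I$.
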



\subsection{Proof of Theorem \ref{theomain2}}

Now we can prove the result of Theorem \ref{theomain2}; that is, we show that $DT(\psi)D\in\bigcup_{r\geq0}\LAS_{\Delta_n}^{(r)}$ for all $D\in \MD_{++}^{n}$ and $\psi\in \Psi$. 
We show this result as a direct application of Theorem \ref{opt-DMD}. It thus remains to check that the two assumptions in Theorem \ref{opt-DMD} hold.
First, by combining two results from \cite{Hildebrand}, the description of the (finitely many) minimizers of problem (\ref{sqp}) for $M=T(\psi)$ ($\psi\in \Psi$) can be found.

\begin{lemma}\label{minimizers}
The minimizers of problem (\ref{sqp}) associated to  the matrix $M=T(\psi)$ (with  $\psi\in \Psi$) are the vectors $v_i=\frac{u_i}{\|u_i\|_1}$ for $i\in [5]$, where the $u_i$'s are defined by
\begin{equation*}
\resizebox{0.9\hsize}{!}{$ u_1=\begin{pmatrix}  \sin \psi_5 \\
\sin(\psi_4+\psi_5)\\
\sin \psi_4\\
0\\
0
\end{pmatrix}, \
u_2= 
\begin{pmatrix}
\sin(\psi_3+\psi_4)\\
\sin \psi_3\\
0\\
0\\
\sin \psi_4
\end{pmatrix}, \ 
u_3= \begin{pmatrix}
0\\
\sin \psi_1\\
\sin(\psi_1+\psi_5)\\
\sin\psi_5\\
0
\end{pmatrix},\
u_4=\begin{pmatrix}
0\\
0\\
\sin\psi_2\\
\sin(\psi_1+\psi_2)\\
\sin\psi_1
\end{pmatrix},\
u_5=\begin{pmatrix}
\sin\psi_2\\
0\\
0\\
\sin\psi_3\\
\sin(\psi_2+\psi_3)
\end{pmatrix}$.}
\end{equation*}
\end{lemma}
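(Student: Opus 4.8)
The plan is to prove Lemma \ref{minimizers} by characterizing the zeros of the quadratic form $x^TT(\psi)x$ on the simplex $\Delta_5$, and this splits naturally into two parts: first identifying the support structure of any minimizer, and then solving explicitly for the minimizer once its support is fixed. By Lemma \ref{support-psd}, if $v\in\Delta_5$ is a minimizer with support $S$, then the principal submatrix $T(\psi)[S]$ is positive semidefinite and $v|_S$ lies in its kernel. So the first step is to determine which subsets $S\subseteq[5]$ can arise as supports. The diagonal entries of $T(\psi)$ are all equal to $1$, so every $1\times 1$ principal submatrix is positive definite and there is no zero supported on a single coordinate. For $|S|=2$, the $2\times 2$ submatrix $\left(\begin{smallmatrix} 1 & -\cos\alpha \\ -\cos\alpha & 1\end{smallmatrix}\right)$ (or with $+\cos$) is positive semidefinite with nontrivial kernel only when the off-diagonal entry is $\pm 1$, which for $\psi\in\Psi$ (where each $\psi_i\in(0,\pi)$ and the relevant angle arguments lie strictly between $0$ and $\pi$) does not occur. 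Hence all minimizers have support of size at least $3$.

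The crux is the case $|S|=3$, where I expect the five listed vectors $u_i$ to emerge. Here I would invoke the specific structure of $T(\psi)$: the ``cyclic'' labelling of the angles is designed so that certain $3\times 3$ principal submatrices are singular positive semidefinite. Concretely, for a consecutive triple (in the cyclic order matching the definition (\ref{T})), the $3\times 3$ submatrix takes the form $\left(\begin{smallmatrix} 1 & -\cos a & \cos(a+b) \\ -\cos a & 1 & -\cos b \\ \cos(a+b) & -\cos b & 1\end{smallmatrix}\right)$, whose determinant is $1-\cos^2 a-\cos^2 b-\cos^2(a+b)+2\cos a\cos b\cos(a+b)$; a standard trigonometric identity shows this equals $-\sin^2(a+b)+\ldots$ and in fact vanishes identically, so this submatrix is singular. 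One then checks it is positive semidefinite (its leading minors are nonnegative for the relevant angle ranges) and computes its kernel: the kernel vector is proportional to $(\sin b,\ \sin(a+b),\ \sin a)$, which is exactly the pattern appearing in $u_1,\ldots,u_5$. This is the step I anticipate being the main obstacle, since it requires correctly matching the index pattern of each $u_i$ against the correct angle arguments drawn from the off-diagonal entries of $T(\psi)$, and verifying positive semidefiniteness (not just singularity) using $\psi\in\Psi$.

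Having found the five candidate kernel vectors $u_i$, the next step is to confirm that each $v_i=u_i/\|u_i\|_1$ genuinely lies in $\Delta_5$ and is a minimizer: positivity of the entries follows because $\psi\in\Psi$ forces each $\sin\psi_j$ and each $\sin(\psi_j+\psi_{j+1})$ to be strictly positive (the arguments lie in $(0,\pi)$, using $\sum\psi_i<\pi$), so $u_i\in\mathbb{R}^5_+$ with support exactly the three claimed coordinates, and $v_i^TT(\psi)v_i=0$ since $u_i$ is in the kernel of the relevant submatrix. It then remains to rule out any minimizer with support of size $4$ or $5$, and to confirm that within each size-$3$ support there is no second minimizer. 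The latter follows from Lemma \ref{finite-sosc}'s argument (at most one minimizer per support), or directly from the fact that each singular $3\times 3$ submatrix above has a one-dimensional kernel. For the former I would argue that a $4\times 4$ or $5\times 5$ principal submatrix being positive semidefinite with nontrivial kernel would be too strong given that $\psi\in\Psi$ ensures $T(\psi)$ generates an extreme ray (by Theorem \ref{theo-extreme-rays}); more concretely, one checks directly that no $4\times 4$ principal submatrix of $T(\psi)$ is positive semidefinite for $\psi\in\Psi$.

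Rather than rederiving all of this from scratch, the cleanest route — and the one flagged in the statement — is to cite the relevant results of Hildebrand \cite{Hildebrand}, who analyzed precisely the zero set of $x^TT(\psi)x$ as part of establishing that $T(\psi)$ is extreme. The proof would therefore combine Hildebrand's explicit description of the minimal zeros of $T(\psi)$ (which yields the supports and the kernel vectors) with the elementary verification, via Lemma \ref{support-psd} and the positivity of the sine values under the constraints defining $\Psi$, that these are exactly the normalized vectors $v_1,\ldots,v_5$ listed above and that there are no others.
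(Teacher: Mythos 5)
Your closing recommendation is exactly what the paper does: its proof of Lemma \ref{minimizers} consists of two citations, namely \cite[Theorem 2.5]{Hildebrand} for the fact that problem (\ref{sqp}) with $M=T(\psi)$ has exactly five minimizers, supported on $\{1,2,3\}$, $\{1,2,5\}$, $\{2,3,4\}$, $\{3,4,5\}$, $\{1,4,5\}$, and \cite[Lemma 3.2]{Hildebrand} for their explicit form. So, read as ``cite Hildebrand and verify the formulas'', your proposal coincides with the paper's proof.

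The self-contained derivation that makes up the bulk of your proposal is a genuinely more elementary route, and its key computations are correct: the determinant $1-\cos^2 a-\cos^2 b-\cos^2(a+b)+2\cos a\cos b\cos(a+b)$ does vanish identically, the kernel of each ``consecutive-triple'' submatrix is spanned by $(\sin b,\,\sin(a+b),\,\sin a)$, and under the cyclic relabelling $i\mapsto i+1$, $\psi_j\mapsto \psi_{j+1}$ that leaves $T(\psi)$ invariant these kernel vectors are exactly $u_1,\ldots,u_5$. But as a stand-alone proof it has two gaps. First, only the five cyclically consecutive triples are analyzed; the five non-consecutive triples $\{1,2,4\}$, $\{2,3,5\}$, $\{1,3,4\}$, $\{2,4,5\}$, $\{1,3,5\}$ are never excluded as supports, and their submatrices have a different entry pattern (two off-diagonal entries of the form $+\cos(\psi_i+\psi_j)$), so one must separately show, e.g., that their determinants are nonzero for $\psi\in\Psi$ --- this requires an additional argument. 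Second, the appeal to extremality to dismiss supports of size $4$ and $5$ proves nothing: rank-one positive semidefinite matrices also generate extreme rays of $\COP_n$, so extremality is perfectly compatible with singular PSD principal submatrices. A concrete repair is available: every subset of size at least $4$ contains a consecutive triple $S$; if $T(\psi)[S\cup\{j\}]$ were PSD, then the kernel vector $u$ of $T(\psi)[S]$, padded with a zero in position $j$, would be a zero of a PSD quadratic form and hence lie in its kernel, forcing $\sum_{i\in S}T(\psi)_{ij}u_i=0$, which contradicts the strict positivity established in the final (unnumbered) lemma of Section \ref{secproofmaintheo}. With these two points filled in, your elementary argument would give a complete, Hildebrand-free proof; as submitted, the only complete version of your proposal is the one that, like the paper, leans on Hildebrand's results.
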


\begin{proof}
By \cite[Theorem 2.5]{Hildebrand}) it follows that there are exactly five minimizers and that they are  supported, respectively, by  the sets  $\{1,2,3\}, \{1,2,5\}, \{2,3,4\}, \{3,4,5\}$ and $\{1,4,5\}$. Next, using  \cite[Lemma 3.2]{Hildebrand}), we obtain that the minimizers take the desired form.
\end{proof}

We finally check that the second assumption of Theorem \ref{opt-DMD} holds for the matrices  $M=T(\psi)$ ($\psi\in \Psi$).

\begin{lemma}
Let $\psi\in \Psi$ and let $v$ be a minimizer of problem (\ref{sqp}) where $M=T(\psi)$. Then, we have  $(Mv)_i>0$ 
 for all $i\in[5]\setminus \supp(v)$.
\end{lemma}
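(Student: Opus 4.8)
The plan is to verify the last remaining hypothesis of Theorem~\ref{opt-DMD} by a direct computation, using the explicit minimizers from Lemma~\ref{minimizers} together with the symmetry of the matrices $T(\psi)$. By Lemma~\ref{minimizers} the five minimizers $v_i$ are supported on the consecutive (cyclic) triples $\{1,2,3\}$, $\{2,3,4\}$, $\{3,4,5\}$, $\{4,5,1\}$ and $\{5,1,2\}$, and the matrix $T(\psi)$ itself is invariant under a cyclic shift of the indices together with a cyclic shift of the angles $\psi_1,\ldots,\psi_5$. Hence it suffices to check the claim for a single minimizer, say $v=v_1=u_1/\|u_1\|_1$ with $S=\supp(v_1)=\{1,2,3\}$; the other four cases then follow by applying the cyclic symmetry. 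Since scaling $v$ by the positive constant $\|u_1\|_1$ does not affect the sign of the entries of $Mv$, I would work directly with $u=u_1$ and show $(T(\psi)u_1)_i>0$ for $i\in\{4,5\}$.

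First I would compute the two relevant coordinates explicitly. For $i=4$ we have
\begin{align*}
(T(\psi)u_1)_4 = \cos(\psi_2+\psi_3)\sin\psi_5 + \cos(\psi_5+\psi_1)\sin(\psi_4+\psi_5) - \cos\psi_1 \sin\psi_4,
\end{align*}
and for $i=5$,
\begin{align*}
(T(\psi)u_1)_5 = -\cos\psi_3\sin\psi_5 + \cos(\psi_3+\psi_4)\sin(\psi_4+\psi_5) + \cos(\psi_1+\psi_2)\sin\psi_4,
\end{align*}
using the first, second and third entries of row $4$ (resp.\ row $5$) of $T(\psi)$ against the nonzero entries $\sin\psi_5,\sin(\psi_4+\psi_5),\sin\psi_4$ of $u_1$. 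The plan is then to simplify each of these expressions with the product-to-sum identities $\cos a\sin b=\tfrac12(\sin(a+b)-\sin(a-b))$, so that the cross terms telescope and one is left with a single sine (or a short positive combination of sines) whose argument is a nonnegative partial sum of the $\psi_i$. Because $\psi\in\Psi$ forces each $\psi_i>0$ and $\sum_i\psi_i<\pi$, every such partial sum lies in the open interval $(0,\pi)$, on which the sine is strictly positive; this is exactly where the constraint defining $\Psi$ is used.

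The main obstacle will be purely the trigonometric bookkeeping: one must carry out the product-to-sum expansions carefully and confirm that the resulting cancellations leave a manifestly positive quantity, and it is conceivable that the simplification for $i=4$ and $i=5$ produces slightly different final forms, so each should be checked separately rather than assumed symmetric under the $4\leftrightarrow 5$ swap. I expect that after simplification $(T(\psi)u_1)_4$ collapses to $\sin(\psi_1+\psi_2+\psi_3+\psi_4+\psi_5)$ or $\sin(\psi_1+\psi_4+\psi_5)$, and similarly for the index $5$, each of which is positive precisely because the total angle is less than $\pi$; any additional leftover terms should themselves be products of two positive sines of partial sums in $(0,\pi)$. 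Once both coordinates are shown positive for $v_1$, invoking the cyclic symmetry completes the verification of the second hypothesis of Theorem~\ref{opt-DMD}, and combined with Lemma~\ref{minimizers} (finiteness of minimizers) this yields $DT(\psi)D\in\bigcup_{r\ge0}\LAS_{\Delta_5}^{(r)}$ for all $D\in\MD_{++}^5$, proving Theorem~\ref{theomain2}.
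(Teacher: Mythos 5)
Your setup matches the paper's own proof: reduce via the cyclic symmetry of $T(\psi)$ (with a cyclic shift of the angles) to the single minimizer of Lemma~\ref{minimizers} supported on $\{1,2,3\}$, discard the positive normalization, and check the coordinates $(T(\psi)u_1)_4$ and $(T(\psi)u_1)_5$, both of which you write out correctly. (The paper reduces even further: conjugating by the permutation $(1\,3)(4\,5)$ sends $T(\psi)$ to $T(\psi')$ with $\psi'=(\psi_3,\psi_2,\psi_1,\psi_5,\psi_4)\in\Psi$ and fixes $u_1$, so the $i=5$ inequality for $\psi$ is the $i=4$ inequality for $\psi'$; your decision to check both coordinates separately is harmless, merely redundant.)

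The genuine gap is that the heart of the lemma --- the positivity of these expressions --- is left as an expectation, and the expectation is wrong. The expression does not collapse to $\sin(\psi_1+\cdots+\psi_5)$ or to $\sin(\psi_1+\psi_4+\psi_5)$: taking $\psi_i=\pi/10$ for all $i$ gives $(T(\psi)u_1)_4\approx 0.43$, while those sines equal $1$ and $\approx 0.81$. The correct simplification (the paper's computation) is
\begin{align*}
(T(\psi)u_1)_4=\sin\psi_5\,\bigl(\cos(\psi_2+\psi_3)+\cos(\psi_1+\psi_4+\psi_5)\bigr),
\end{align*}
and the bracketed factor is \emph{not} a positive combination of sines (or products of sines) of partial sums: either cosine may be negative, e.g.\ $\cos(\psi_1+\psi_4+\psi_5)<0$ whenever $\psi_1+\psi_4+\psi_5>\pi/2$, which is allowed in $\Psi$. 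So the principle your plan rests on --- ``partial sums lie in $(0,\pi)$, hence the terms are positive'' --- cannot finish the proof. A different final step is needed: since $\psi_2+\psi_3$ and $\pi-(\psi_1+\psi_4+\psi_5)$ both lie in $(0,\pi)$ and $\psi_2+\psi_3<\pi-(\psi_1+\psi_4+\psi_5)$, monotonicity of cosine on $(0,\pi)$ gives $\cos(\psi_2+\psi_3)>-\cos(\psi_1+\psi_4+\psi_5)$. Equivalently, sum-to-product turns the expression into $2\sin\psi_5\cos\bigl(\tfrac{1}{2}\sum_{i}\psi_i\bigr)\cos\bigl(\tfrac{1}{2}(\psi_2+\psi_3-\psi_1-\psi_4-\psi_5)\bigr)$, where the last factor is positive because its argument lies in $(-\pi/2,\pi/2)$ --- again not an argument about sines of partial sums. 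Until the simplification is actually carried out and this monotonicity (or half-angle) argument is supplied, the proof is incomplete.
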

\begin{proof}
By symmetry, it is enough to check this condition for one of the minimizers, say $v_1$ (as given in Lemma \ref{minimizers}). Since multiplying by a positive constant does not affect the sign we verify the condition for the vector $u_1$. For convenience, we set $u=u_1$. As $\supp(u)=\{1,2,3\}$  the condition we want to check reads as follows
$$\sum_{i=1}^3T(\psi)_{i4}u_i>0\quad  \text{ and }\quad \sum_{i=1}^3T(\psi)_{i5}u_i>0.$$
Again, it suffices to check just the first inequality since the second one is    analogous (up to index permutation). We will now check  that the first expression is positive. Indeed we have
\begin{align*}
&\sum_{i=1}^3T(\psi)_{i4}u_i\\
= &\cos(\psi_2+\psi_3)\sin\psi_5 + \cos(\psi_5+\psi_1)\sin(\psi_4+\psi_5) -\cos\psi_1\sin\psi_4 \\
=& \cos(\psi_2+\psi_3)\sin\psi_5 + (\cos\psi_5\cos\psi_1 -\sin\psi_5\sin\psi_1)(\sin\psi_4\cos\psi_5+\cos\psi_4\sin\psi_5)-\cos\psi_1\sin\psi_4 \\
=& \cos(\psi_2+\psi_3)\sin\psi_5 + (\cos^2\psi_5-1)\cos\psi_1\sin\psi_4+ \cos\psi_5\cos\psi_1 \cos\psi_4\sin\psi_5  \\
 & \hspace{7cm} -\sin\psi_5\sin\psi_1\sin\psi_4\cos\psi_5 -\sin^2\psi_5\sin\psi_1\cos\psi_4 \\
 =& \cos(\psi_2+\psi_3)  \sin(\psi_5) - \sin^2\psi_5\sin(\psi_1+\psi_4) + \sin\psi_5\cos\psi_5\cos(\psi_1+\psi_4) \\
 =&  \cos(\psi_2+\psi_3)\sin\psi_5 + \sin\psi_5 \cos(\psi_1+\psi_4+\psi_5) \\
 =& \sin\psi_5(\cos(\psi_2+\psi_3) + \cos(\psi_1+\psi_4+\psi_5)).
  \end{align*}
  We finish the proof by showing that both factors in the last expression are positive for $\psi\in \Psi$. By the definition of $\Psi$,  $\sum_{i=1}^5 \psi_i<\pi$ and $\psi_i>0$ for $i\in [5]$, so that $ \psi_5\in (0,\pi)$ and thus $\sin\psi_5>0$. Now we use that cosine is a monotone decreasing function in the interval $(0,\pi)$. Observe that $\psi_2+\psi_3$ and  $\pi - (\psi_1+\psi_4+\psi_5)$ belong to $(0,\pi)$ and $\psi_2+\psi_3<\pi - (\psi_1+\psi_4+\psi_5)$. Thus, $\cos(\psi_2+\psi_3)> \cos(\pi-(\psi_1+\psi_4+\psi_5))=-\cos(\psi_1+\psi_4+\psi_5)$,  completing the proof.
  \end{proof}

\section{Concluding remarks}\label{secfinal}

In this paper we   investigate whether  the cones $\MK_n^{(r)}$ provide a complete approximation hierarchy for  the copositive cone $\COP_n$, i.e., whether their union covers the full cone $\COP_n$. As mentioned earlier, the answer is positive for $n\le 4$ (then $\MK_n^{(0)}=\COP_n$  \cite{Diananda}) and negative for $n\ge 6$  \cite{LV2021b}. For the case $n=5$, it was shown in $\cite{DDGH}$ that $\COP_5\neq \MK_5^{(r)}$ for any $r\in \mathbb{N}$. Whether the cones $\MK^{(r)}_5$ provide a complete hierarchy for $\COP_5$, i.e., whether equality 
$\COP_5=\bigcup_{r\geq0}\MK_5^{(r)}$ holds, remains open and is the main topic of this paper. 
As our main result we show that equality holds if and only if every positive diagonal scaling of the Horn matrix $H$ belongs to $\bigcup_{r\geq0}\MK_5^{(r)}$. Our proof technique relies on considering an alternative approximation hierarchy of $\COP_n$, provided by the Lasserre-type cones $\LAS_{\Delta_n}^{(r)}\subseteq \MK_n^{(r)}$. Namely, we show  that all the extreme matrices of $\COP_5$, that do not belong to $\MK^{(0)}_5$ and are not a positive diagonal scaling of the Horn matrix, do indeed  belong to $\bigcup_{r\geq 0}\LAS_{\Delta_n}^{(r)}$. 


\subsubsection*{On the impact of diagonal scaling}
Diagonal scaling plays a crucial role in the analysis of the copositive cone. It is clear that any positive diagonal scaling of a copositive matrix remains copositive. Moreover, a positive diagonal scaling of an extreme copositive matrix remains extreme. However, this operation is not well-behaved with respect to the cones $\MK_n^{(r)}$ when $r\ge 1$ and $n\ge 5$. Dickinson et al. 
\cite{DDGH}  show that, for any matrix $M\in \COP_n\setminus \MK_n^{(0)}$ and any $r\in \mathbb{N}$, there exists a positive diagonal scaling of $M$ that does not belong to $\MK_n^{(r)}$. On the other hand, they also show that every $5\times 5$ copositive matrix with an all-ones diagonal belongs to $\MK_5^{(1)}$. Thus, a method for checking whether a $5\times 5$ matrix belongs to $\COP_5$ is to scale it to obtain a matrix with binary diagonal entries and check whether this new matrix belongs to $\MK_5^{(1)}$. In contrast, as shown in \cite{LV2021b}, for any $n\geq 7$ there exist matrices $M\in \COP_7\setminus \bigcup_{r\ge 0}\MK_n^{(r)}$ with an all-ones diagonal.  The case $n=6$ remains open.
\begin{question}
Let $M\in \COP_6$ with an all-ones diagonal. Does it hold that $M\in \bigcup_{r\geq 0}\MK_6^{(r)}$?
\end{question}

\subsubsection*{Zeros of the form $x^TMx$}
 As shown earlier, for a matrix $M\in \COP_n$, the number of zeros of the form $x^TMx$ in the simplex plays a fundamental role for checking  membership of $M$ in the cones $\LAS_n^{(r)}$. If $M$ is strictly copositive (i.e., $x^TMx$ has no zeroes in $\Delta_n$), then $M\in \bigcup_{r\geq 0}\LAS_{\Delta_n}^{(r)}\subseteq \bigcup_{r\geq 0}\MK_n^{(r)}$. If $M$ has finitely many zeros in $\Delta_n$, then, as was shown in Section \ref{opt-con}, one possible strategy for showing membership in $\bigcup_{r\geq0}\LAS_n^{(r)}$ is checking the classical optimality conditions over  $\Delta_n$ at every zero of $x^TMx$. This was, in fact, our strategy for showing that the matrices $T(\psi)$ for $\psi\in \Psi$ belong to some cone $\LAS_n^{(r)}$ and thus to some cone $\MK_n^{(r)}$.  Finally, if $x^TMx$ has infinitely many zeros in $\Delta_n$, then the classical optimality conditions cannot hold and thus the strategy from Section \ref{opt-con} does not work. One example that shows how the number of minimizers causes issues is the Horn matrix $H$. While $H$ belongs to $\MK_5^{(1)}$, it does not belong to $\LAS_{\Delta_5}^{(r)}$ for any $r\in \mathbb{N}$. To show that, we exploit the structure of the (infinitely many) zeros of the form $x^THx$ on $\Delta_n$. Hence, another strategy will be needed for settling the question whether any diagonal scaling of $H$ belongs to some cone $\MK^{(r)}_5$.

\subsubsection*{Copositive matrices from graphs}
The cones $\MK_n^{(r)}$ are used by de Klerk and Pasechnik \cite{dKP2002} for defining a hierarchy of upper bounds $\vartheta^{(r)}(G)$ for the stability number $\alpha(G)$ of a graph $G$. They conjectured that these bounds converge  to $\alpha(G)$ in $\alpha(G)$ steps. Define the {\em graph matrix}  
$M_G=\alpha(G)(A_G+I)-J$, where $A_G, I$ and $J$ are, respectively, the adjacency, identity and all-ones matrices. 
As an application of a result of Motzkin-Straus \cite{motzkin},   $M_G$ is a copositive matrix and the conjecture in  \cite{dKP2002} boils down to claiming that $M_G$ belongs to the cone $\MK_n^{(\alpha(G)-1)}$. 
In fact, it is not even known whether $M_G$ belongs to {\em some} cone $\MK_n^{(r)}$. So the following two conjectures remain open.
\begin{conjecture}[\cite{dKP2002}]\label{conjecture-dKP}
For any graph $G$, we have $M_G\in \MK_n^{(\alpha(G)-1)}$.
\end{conjecture}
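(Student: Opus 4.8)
Since this is the de Klerk--Pasechnik conjecture, a well-known open problem, I can only propose a strategy and pinpoint where it stalls. The natural line of attack is the optimization approach of this paper. By the Motzkin--Straus theorem \cite{motzkin} one has $\min_{x\in\Delta_n} x^T(A_G+I)x = 1/\alpha(G)$, and since $x^TM_Gx = \alpha(G)\,x^T(A_G+I)x - 1$ on $\Delta_n$, the standard quadratic program (\ref{sqp}) for $M=M_G$ has optimal value $0$, so that $M_G\in\bCOP_n$. As observed before Corollary \ref{finite-LAS}, finite convergence of the Lasserre hierarchy for (\ref{sqp}) is equivalent to $M_G\in\bigcup_{r\ge 0}\LAS_{\Delta_n}^{(r)}$, and via $\LAS_{\Delta_n}^{(r)}\subseteq\MK_n^{(r-2)}$ this would place $M_G$ in $\bigcup_{r\ge 0}\MK_n^{(r)}$. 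Hence the plan is to establish such finite convergence through Nie's theorem (Theorem \ref{theo-Nie}), for which it suffices that (CQC), (SCC) and (SOSC) hold at every global minimizer of (\ref{sqp}).

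I would first translate these conditions into graph language. The global minimizers of (\ref{sqp}) are exactly the Motzkin--Straus optimizers of $x^T(A_G+I)x$ on $\Delta_n$; for a maximum stable set $S$ the uniform vector $u_S$ (with $(u_S)_i=1/\alpha(G)$ for $i\in S$ and $0$ otherwise) is one of them. As in the simplex setting, (CQC) holds automatically. A direct computation gives $(M_Gu_S)_i = d_S(i)-1$ for $i\notin S$, where $d_S(i)=|N(i)\cap S|$; since a maximum stable set is maximal, $d_S(i)\ge 1$ and so $(M_Gu_S)_i\ge 0$ always. By Lemma \ref{scc-diag}, (SCC) holds at $u_S$ precisely when $(M_Gu_S)_i>0$ for all $i\notin S$, that is, when \emph{every} vertex outside $S$ has at least two neighbours in $S$. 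Moreover, by Lemma \ref{finite-sosc}, (SOSC) holds automatically once (\ref{sqp}) has finitely many minimizers. Thus, for graphs possessing a unique maximum stable set $S$ in which every outside vertex has at least two neighbours in $S$, Theorem \ref{opt-DMD} would immediately yield $M_G\in\bigcup_{r}\LAS_{\Delta_n}^{(r)}\subseteq\bigcup_r\MK_n^{(r)}$.

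The decisive obstacle is that these hypotheses fail generically, for two independent reasons. First, when $G$ carries a rich family of maximum stable sets the zero set of $x^TM_Gx$ on $\Delta_n$ becomes infinite --- already for $G=2K_2$ it forms a two-dimensional subset of $\Delta_4$ --- and then Nie's criterion cannot apply, which is exactly the pathology that obstructs the Horn matrix in Lemma \ref{lemHornLASD}. Second, even with a unique maximum stable set, (SCC) collapses the moment some outside vertex has exactly one neighbour in $S$. So the Nie-based route settles only a thin class of graphs, and a genuinely new idea is needed for the infinitely-many-zeros regime; one plausible direction is to reduce to a suitable notion of ``irreducible'' graph, quotienting out the combinatorial symmetry that produces the continuum of minimizers, and then to build explicit sum-of-squares certificates from the integral structure of $M_G$. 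I would stress, finally, that even a complete success of this program would only place $M_G$ in \emph{some} cone $\MK_n^{(r)}$: the conjecture demands the sharp rate $r=\alpha(G)-1$, a strictly stronger assertion that finite-convergence arguments do not control and that would require certificates of explicitly bounded degree --- in my view the genuinely hard core of the problem.
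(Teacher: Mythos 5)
This statement is not proved in the paper at all: it is the de Klerk--Pasechnik conjecture \cite{dKP2002}, stated in the concluding remarks precisely \emph{because} it remains open, so there is no proof of record to compare you against. Your decision not to claim a proof is the correct one, and your partial analysis is sound: Motzkin--Straus \cite{motzkin} does place $M_G\in\bCOP_n$, your computation $(M_Gu_S)_i=d_S(i)-1$ for $i\notin S$ is correct, and by Lemma \ref{scc-diag} the condition (SCC) at $u_S$ is indeed that every vertex outside $S$ has at least two neighbours in $S$. The two obstacles you identify are the real ones, and they coincide with the paper's own discussion: the Horn matrix $H=M_{C_5}$ has infinitely many zeros on $\Delta_5$ and lies in no cone $\LAS^{(r)}_{\Delta_5}$ (Lemma \ref{lemHornLASD}), so Nie-type arguments cannot reach it; and finite convergence carries no effective bound on $r$, so even complete success of this program would only prove Conjecture \ref{conjecture-LV} (membership in \emph{some} $\MK^{(r)}_n$, the conjecture of \cite{LV2021a}), never the sharp degree $r=\alpha(G)-1$ demanded by Conjecture \ref{conjecture-dKP}. (For context, the sharp conjecture is known for $\alpha(G)\le 8$ via the cones $\MQ^{(r)}_n$ \cite{GL2007}, a route orthogonal to yours.)

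Two refinements to your sketch. First, your sufficient condition is not quite ``immediate'' as stated: Theorem \ref{opt-DMD} requires finiteness of, and (SCC) at, \emph{all} global minimizers of problem (\ref{sqp}), and standard quadratic programs of Motzkin--Straus type can have spurious minimizers whose support is not a stable set (for $C_5$ the zeros $u_t$ have support $\{1,3,4\}$, which contains an edge); uniqueness of the maximum stable set does not by itself exclude these. The gap is repairable --- the mass-shifting argument in the Motzkin--Straus proof shows that $x^TMx$ is affine in the direction $e_i-e_j$ for adjacent $i,j$, so every connected component of the zero set contains some $u_S$, and (SCC) plus the automatic (SOSC) at $u_S$ (note $M_G[S]=\alpha(G)I-J$, so $a^TM_G[S]a=\alpha(G)\|a\|^2>0$ on $\{a\ne 0:\sum_{i\in S}a_i=0\}$) makes $u_S$ isolated --- but this argument must be supplied. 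Second, once it is supplied, your uniqueness hypothesis becomes superfluous, and the remaining hypothesis --- every vertex outside every maximum stable set has at least two neighbours in it --- is exactly the statement that $G$ has no critical edges. In other words, the partial result your program reaches is precisely the theorem of \cite{LV2021a} that the paper cites in the same concluding section, so you have rediscovered the current state of the art; what is genuinely open is, as you correctly say, the regime of infinitely many zeros and, beyond that, the explicit degree bound.
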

\begin{conjecture}[\cite{LV2021a}]\label{conjecture-LV}
For any graph $G$, we have  $M_G\in \bigcup_{r\geq 0}\MK_n^{(r)}$.
\end{conjecture}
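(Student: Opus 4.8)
The plan is to follow the same optimization route used for Theorem \ref{theomain2}, now applied to the graph matrix $M_G=\alpha(G)(A_G+I)-J$, and to isolate exactly the combinatorial obstruction that keeps it from settling the conjecture in full. First I would reformulate membership via the standard quadratic program (SQP$_{M_G}$). By the Motzkin--Straus identity \cite{motzkin}, $\min_{x\in\Delta_n}x^T(A_G+I)x=1/\alpha(G)$, and since $x^TJx=1$ on $\Delta_n$ this gives $\min_{x\in\Delta_n}x^TM_Gx=\alpha(G)\cdot\frac{1}{\alpha(G)}-1=0$. Hence $M_G\in\bCOP_n$ and the minimizers of (SQP$_{M_G}$) coincide with the optimal solutions of the Motzkin--Straus program; among them are the barycenters $u_S=\frac{1}{\alpha(G)}\mathbf 1_S$ of the maximum independent sets $S$ of $G$. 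By Corollary \ref{finite-LAS} it suffices to verify that (CQC), (SCC) and (SOSC) hold at every global minimizer, since this yields $M_G\in\bigcup_{r\ge0}\LAS_{\Delta_n}^{(r)}\subseteq\bigcup_{r\ge0}\MK_n^{(r)}$.

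Second, I would reduce these conditions to graph-theoretic statements. As noted before Lemma \ref{support-psd}, (CQC) is automatic on the simplex. For (SCC), Lemma \ref{scc-diag} reduces the check at a minimizer $u$ to $(M_Gu)_i>0$ for all $i\notin\supp(u)$. A direct computation at a pure minimizer $u=u_S$ gives, for $i\notin S$, the clean formula $(M_Gu_S)_i=d_S(i)-1$, where $d_S(i)$ is the number of neighbours of $i$ inside $S$. Since $S$ is a maximum independent set, every outside vertex has $d_S(i)\ge1$, so (SCC) holds at $u_S$ exactly when every vertex outside $S$ has at least two neighbours in $S$. When all maximum independent sets satisfy this non-degeneracy condition the minimizers are precisely the (finitely many) barycenters $u_S$, (SOSC) then holds automatically by Lemma \ref{finite-sosc}, and Corollary \ref{finite-LAS} applies. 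This already proves the conjecture, indeed the stronger $\LAS$-membership, for every graph whose Motzkin--Straus program has only finitely many optima.

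The main obstacle is the complementary case. Precisely when some $i\notin S$ has a unique neighbour $j$ in a maximum independent set $S$, the set $(S\setminus\{j\})\cup\{i\}$ is again a maximum independent set, and the whole segment joining $u_S$ to its shifted counterpart consists of minimizers; the program then acquires a continuum of optima, (SCC) fails, and by the observation following Theorem \ref{theo-Nie} the Nie/Putinar finite-convergence machinery cannot certify $M_G$. This is the same phenomenon that makes the Horn matrix escape every cone $\LAS_{\Delta_5}^{(r)}$ (Lemma \ref{lemHornLASD}). For such graphs a genuinely different certificate is required, and I would attempt an inductive reduction on the offending structure: treat a vertex $i$ with a unique neighbour $j$ in some maximum independent set by a graph operation (vertex deletion, or a controlled contraction identifying $i$ and $j$) that strictly decreases the number of such degeneracies, while transferring the sum-of-squares decomposition of $M_G$ to that of the reduced graph through the inclusions established in Section \ref{secrelations}.

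The hard part will be making this reduction preserve membership in $\bigcup_{r\ge0}\MK_n^{(r)}$ with controlled growth of $r$. Since this union is not closed, a perturbation-and-limit argument, perturbing $M_G$ into $\intt(\COP_n)$ where Lemma \ref{interior} applies and then letting $\varepsilon\to0$, does not by itself return a certificate for $M_G$. One therefore needs an explicit algebraic transfer of the sum-of-squares representation under the graph operation, which is exactly the missing ingredient; its delicacy, forced by the continuum of Motzkin--Straus optima, is what keeps both Conjecture \ref{conjecture-dKP} and Conjecture \ref{conjecture-LV} open.
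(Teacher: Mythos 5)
Be aware of what this statement is: Conjecture \ref{conjecture-LV} is stated in the paper as an \emph{open} conjecture (attributed to \cite{LV2021a}), with the paper explicitly saying that both it and Conjecture \ref{conjecture-dKP} remain unresolved; there is no proof in the paper to compare against, and your proposal does not close the gap either. What your first two paragraphs establish is correct, but it is precisely the known partial result that the paper already cites. The Motzkin--Straus computation giving $\min_{x\in\Delta_n}x^TM_Gx=0$, the formula $(M_Gu_S)_i=d_S(i)-1$ for $i\notin S$, and the segment argument showing that a vertex $i\notin S$ with a unique neighbour $j\in S$ produces a continuum of minimizers are all right; moreover your non-degeneracy condition is equivalent to $G$ having no critical edges (if $i\notin S$ has unique neighbour $j\in S$, removing the edge $\{i,j\}$ makes $S\cup\{i\}$ stable, and conversely), so paragraphs one and two rederive the result of \cite{LV2021a} that $M_G\in\bigcup_{r\ge 0}\LAS^{(r)}_{\Delta_n}$ for graphs without critical edges. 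Even there you skip one nontrivial step: you assert that under non-degeneracy the minimizers are \emph{precisely} the barycenters $u_S$, but a priori a minimizer of $\min\{x^TM_Gx:x\in\Delta_n\}$ can have non-stable support (this happens for $C_5$), and excluding this when $G$ has no critical edges requires a separate argument, carried out in \cite{LV2021a} but not in your text.

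The genuine gap is the entire remaining case, and your last two paragraphs are a research plan rather than a proof: the ``explicit algebraic transfer'' of a sum-of-squares certificate under vertex deletion or contraction is exactly the missing mathematics, as you yourself concede. Two concrete reasons why the plan as stated cannot work. First, no mechanism is known (or proposed by you) relating $M_G$ to $M_{G'}$ for the reduced graph $G'$: the matrices have different sizes, $\alpha$ may change under the operation, and membership in $\bigcup_{r\ge0}\MK^{(r)}_n$ is not preserved under the induced matrix maps -- note that this union is not even closed, so limit arguments fail, as you correctly observe. Second, the induction points away from the hard core: as the paper notes (following \cite{LV2021a}), it suffices to prove Conjectures \ref{conjecture-dKP} and \ref{conjecture-LV} for \emph{critical} graphs, i.e., graphs in which every edge is critical, which are maximally degenerate in your sense. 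For these, exemplified by $C_5$ with $M_{C_5}=H\in\MK^{(1)}_5$ but $H\notin\LAS^{(r)}_{\Delta_5}$ for all $r$ (Lemma \ref{lemHornLASD}), any certificate must leave the Lasserre/Nie framework entirely, and your proposal supplies no replacement tool at that end. So the proposal proves only the already-known special case and leaves the conjecture exactly as open as the paper says it is.
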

 The subcones $\MQ_n^{(r)}$ and $\LAS_{\Delta_n}^{(r)}$ of $\MK^{(r)}_n$, defined in (\ref{eqLASDa}) and in (\ref{eqQra}),  have been used to  partially resolve these two conjectures. On the one hand, Laurent and Gvozdenovi\'c  \cite{GL2007} established  Conjecture \ref{conjecture-dKP} for graphs with $\alpha(G)\leq 8$, by  showing $M_G\in \MQ_n^{(\alpha(G)-1)}$ for these graphs 
  (see also \cite{PVZ2007} for the case $\alpha(G)\leq 6$).  On the other hand,  we showed in \cite{LV2021a} that $M_G$ belongs to some cone $\LAS_{\Delta_n }^{(r)}$ whenever $G$ has no critical edges, i.e., when $\alpha(G\setminus e) = \alpha(G)$ for any edge $e$. Observe that the Horn matrix coincides with the graph matrix $M_{C_5}$ when $G=C_5$ is   the 5-cycle. 
  In fact, by Lemma \ref{lemHornLASD}, $M_{C_5}\notin \bigcup_{r\geq0}\LAS^{(r)}$, but $M_{C_5}\in  \MK^{(1)}_5=\MQ^{(1)}_5$. Notice also that $C_5$ is critical (i.e., all  its edges are critical). As observed in \cite{LV2021a} it in fact suffices to show Conjectures \ref{conjecture-dKP} and \ref{conjecture-LV} for the class of critical graphs.

The class of graph matrices $M_G$ has been recently further investigated in \cite{DZ}, where they are used, in particular,  to construct large classes of extreme matrices of $\COP_n$.

\end{document}